\numberwithin{equation}{section}
\newtheorem{lemma}{Lemma}[section]
\newtheorem{propn}[lemma]{Proposition}
\newtheorem{thm}[lemma]{Theorem}
\newtheorem{cor}[lemma]{Corollary}
\newtheorem{defn}[lemma]{Definition}
\newtheorem{remark}[lemma]{Remark}
\DeclareMathOperator{\CR}{CR}
\DeclareMathOperator{\cov}{cov}
\DeclareMathOperator{\SLE}{SLE}
\newenvironment{proofof}[1]{{\em Proof of #1.}}{\hspace*{\fill} $\square$}
 \newcommand{\R}{\mathbb{R}}
 \newcommand{\C}{\mathbb{C}}
 \newcommand{\N}{\mathbb{N}}
 \newcommand{\Z}{\mathbb{Z}}
 \newcommand{\I}{\mathbbm{1}}
 \newcommand{\E}[1]{\mathbb{E}\left(#1\right)}
 \newcommand{\HH}{\mathbb{H}}
 \newcommand{\D}{\mathbb D }
 \renewcommand{\I}[1]{\mathbf 1_{\{#1\}}}
  \renewcommand{\1}{\mathbf 1}
  \renewcommand{\P}{\mathbb P}
  \newcommand{\Gf}{{G_{\HH}^{\mathrm{free}}}}
  \newcommand{\var}{{\mathrm{var}}}
  \newcommand{\Leb}{{\mathcal{L}}}
 \newcommand{\eps}{\varepsilon}
 \newcommand{\avelio}[1]{{\color{red} #1}} 
 \newcommand{\ellen}[1]{{\color{blue} #1}}
\newcommand{\rev}[1]{{#1}}
\let \epsilon \varepsilon
\renewcommand{\upsilon}{{\rev{\gamma'}}}
\title{An elementary approach to quantum length of SLE} 
\author{Ellen Powell$^*$}
\address{$^*$ Durham University, Mathematical Sciences Department, Upper Mountjoy Campus, Stockton Road, Durham, DH1 3LE, UK}
\author{Avelio Sep\'{u}lveda$^\dagger$}
\address{$^\dagger$ Universidad de Chile,  Centro de Modelamiento Matem\'atico (AFB170001), UMI-CNRS 2807, Beauchef 851, Santiago, Chile.}
\date{}
\subjclass[2020]{60D05, 60J67}
\begin{document}

\begin{abstract}
We present an elementary proof establishing equality of the right and left-sided $\sqrt{\kappa}$-quantum boundary lengths of an $\SLE_\kappa$ curve, $\kappa\in (0,4]$. We achieve this by demonstrating that the $\sqrt{\kappa}$-quantum length is equal to the $(\sqrt{\kappa}/2)$-Gaussian multiplicative chaos with reference measure given by half the conformal Minkowski content of the curve, multiplied by $2(4-\kappa)^{-1}(1-\kappa/8)^{-1}$ for $\kappa\in (0,4)$ and by $2$ for $\kappa=4$. Our proof relies on a novel ``one-sided'' approximation of the conformal Minkowski content, which is compatible with the conformal change of coordinates formula.

\end{abstract}\maketitle

\section{Introduction}

Sheffield's breakthrough paper \cite{Sheffield}, on the so-called ``quantum gravity zipper'', catalysed a cascade of developments in the study of Liouville quantum gravity (LQG) surfaces, Schramm--Loewner evolutions (SLE) and the connections between the two. In particular, it led to an extensive toolbox allowing one to construct LQG surfaces decorated with independent SLE curves by conformally welding together two (or more) independent quantum surfaces according to their boundary lengths. This idea is at the core of the landmark papers \cite{GHS,TBM1, MOT, Tutte, BG,HScardy}, which proved rigorous connections between LQG surfaces and random planar maps. It has also recently been used to establish remarkable integrability results for SLE and Liouville CFT, see for example \cite{AHSintegrability,ARSintegrability}.

An important result at the heart of Sheffield's paper \cite{Sheffield} is the following. Take a Gaussian free field $\Gamma$ 
 in a simply connected domain $D$, together with an independent chordal SLE$_\kappa$ curve $\eta$ in the same domain. Then it is possible to make sense of the $\gamma$-``quantum length'' of $\eta$ measured using $\Gamma$ when $\gamma = \sqrt{\kappa}$. Roughly this length should correspond to re-weighting a suitable occupation, or content, measure on $\eta$ by the exponential of $\gamma/2$ times $\Gamma$, \`{a} la Gaussian multiplicative chaos \cite{Kahane,RV,DS,Ber}. However, the construction of the underlying content measure was not fully understood at the time of \cite{Sheffield}. Moreover, to prove the aforementioned conformal welding results, it is essential to know that if one splits $D\setminus \eta$ into two components $D^L$ and $D^R$ (the left and right), and considers $\Gamma$ restricted to each component, then one can construct this quantum length on $\eta$ by measuring quantum boundary lengths in either $D^L$ or $D^R$. 

To explain this last sentence more precisely, suppose we  take a conformal map $\phi:D^L\to \HH$ under which portions of $\eta$ are mapped to portions of the real line. Then it is possible to construct the Gaussian multiplicative chaos measure 
on $\R$ 
corresponding to $\exp(\tfrac{\gamma}{2}\Gamma^L(x)) dx$, where 
\begin{align*}
\Gamma^L=\Gamma\circ \phi^{-1}+\left(\frac2{\gamma}+\frac{\gamma}2\right)\log\left|\left(\phi^{-1}\right)'\right|\end{align*}
{is defined the so-called ``conformal change-of-coordinates'' formula\footnote{\label{coc} 
 This is natural because, due to the construction of GMC measures using regularisation,  if we have (for example) a Gaussian free field with free boundary conditions $\Phi$ on $\HH$ and a conformal map $\psi: \HH\to \HH$, then the image under $\psi$ of the GMC measure $\tilde{\nu}^{\gamma}_{\Phi}$ (roughly $\exp(\tfrac{\gamma}{2}\Phi(x)) dx$) on $\R$ is not the GMC measure corresponding to $\Phi\circ \psi^{-1}$, but rather to $\Phi\circ\psi^{-1}+(2/\gamma+\gamma/2)\log|(\psi^{-1})'|$, \cite{DS,SheffieldWang}.}, see \cite{Sheffield}}. 
 \rev{The pull-back of this chaos measure }
 through $\phi^{-1}$ defines a measure on $\eta$ which we call the \emph{quantum length on the left side of $\eta$}.  On the other hand, we could have defined quantum length on the right side of $\eta$ analogously, by starting with $D^R$ rather than $D^L$. The crucial result from \cite{Sheffield}, which relies on an extremely delicate and abstract proof, is that the left and right quantum lengths constructed this way agree almost surely. This is then taken as the definition of \emph{the quantum length for $\eta$.}

Since the work of Sheffield \rev{(which originally appeared in 2010)}, the Minkowski content measure $\hat{m}_\eta$ on an SLE curve has been rigorously constructed in the papers \cite{LawlerSheffield,LawlerZhou,LawlerRezaei}. It was later shown in an article of Benoist, \cite[Proposition 3.3]{Benoist}, that for $\gamma\in (0,2)$, Sheffield's quantum length \emph{does} correspond to an \emph{unknown} constant times the {$(\gamma/2)$}-Gaussian multiplicative chaos for $\Gamma$ with reference measure $\hat{m}_\eta$. However, {the demonstration of this result heavily relies} on the {(complex) proofs} from \cite{Sheffield}. A similar programme has been carried out in \cite{HP} \rev{(analogous to \cite{Sheffield}, obtained by taking limits)} and then \cite{MS4} \rev{(analogous to \cite{Benoist})} in the case $\gamma=2$.

The main contribution of the present article is a direct and elementary proof that for $\gamma\in (0,2]$, Sheffield's quantum length on the left or right hand side of $\eta$ can be constructed as a Gaussian multiplicative chaos measure with reference measure given by left or right \emph{one-sided conformal} Minkowski content. These reference measures $m_\eta^L$ and $m_\eta^R$ are  constructed in the present article, and we show that they are equal. Thus, they should each be thought of as half the conformal Minkowski content of $\eta$, which is itself a deterministic multiple of the Minkowski content $\hat{m}_\eta$. In particular, this implies the equality of Sheffield's quantum lengths on the left and the right hand side of the curve, without relying on the arguments in \cite{Sheffield}.

For $\gamma\in (0,2)$ we identify the unknown constant in \cite{Benoist} explicitly for the first time. In fact, it is this explicit formula that allows us to treat the $\gamma=2$ case by taking limits. \rev{The approach of taking joint limits as $\gamma\uparrow 2$, $\kappa\to 4$ in SLE/LQG results has now been successfully employed in a number of settings: see for example \cite{CLE4AG,criticalMOT}.}

\subsection{Results} Let $\eta$ be an SLE$_\kappa$ curve from $0$ to $\infty$ in $\HH$ and \rev{$\kappa\in (0,4]$}. We denote by $\hat m_{\eta}$ the ``natural occupation  measure'' associated to $\eta$, which was shown to exist in \cite{LawlerRezaei} (denoted there by $\mu$, see Section 3.2). That is to say $\hat m_\eta$ is the Borel measure on $\overline{\HH}$ defined by
\begin{equation}\label{eq:hatm}
\hat m_{\eta} (dz) = \lim_{u\to 0} u^{-(2-d)} \1_{d(z,\eta)<u} dz 
\quad ; \quad
d:=1+\frac{\kappa}8.
\end{equation}

We first show that $\hat m_\eta$ can be constructed as a multiple of the left or right-sided conformal Minkowski content of $\eta$. More precisely, for $z\in \overline{\HH}$ we define $A^L(z)$, resp.\,$A^R(z)$, to be the event that $z$ lies in the left, resp.\,right, connected component of $\HH \backslash \eta((0,\infty))$. Then we have the following.
\begin{thm}\label{T:onesidedmink}
    Let $\eta$ be an SLE$_\kappa$ from $0$ to $\infty$ in $\HH$, with \rev{$\kappa \in (0,4]$}. Then, for $\rev{q}=L$ and $\rev{q}=R$, the measure
    \begin{align*}
    m_{\eta}^{q}(dz) := \lim_{u\to 0} u^{-(2-d)}\1_{\CR(z,\HH \backslash \eta)<u}\1_{A^{q}(z)} dz
    \end{align*}
    exists as an almost sure limit, and we have $m_\eta^L=m_\eta^R=:m_\eta=K_\kappa \hat m_{\eta} (dz)$ almost surely, where $K_\kappa$ is defined in \eqref{eq:kkappa}.
\end{thm}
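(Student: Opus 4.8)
The plan is to establish the existence of the one-sided conformal Minkowski content as a limit in probability, and then identify it with a multiple of $\hat m_\eta$ by an absolute-continuity argument relating the conformal radius $\CR(z,\HH\setminus\eta)$ to the distance $d(z,\eta)$. The natural first step is a Koebe-type comparison: for $z$ in, say, the left component of $\HH\setminus\eta$, the conformal radius $\CR(z,\HH\setminus\eta)$ and the distance $d(z,\eta\cup\R)$ are comparable up to a factor of $4$. This already shows tightness of the approximating quantities $u^{-(2-d)}\1_{\CR(z,\HH\setminus\eta)<u}\1_{A^i(z)}$ against test functions, so the real content is to show convergence and to compute the Radon--Nikodym density against $\hat m_\eta$.

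To get convergence I would use a first-and-second-moment computation in the spirit of \cite{LawlerRezaei}. The key input is the one-point estimate: there should be a function $G(z)$ (a Green's-function-type quantity, built from the SLE Green's function on the half-plane together with a boundary factor accounting for which side $z$ lies on) such that
\begin{align*}
\P\bigl(\CR(z,\HH\setminus\eta)<u,\ A^i(z)\bigr) \sim c\, u^{2-d}\, G(z)
\end{align*}
as $u\to 0$, and a matching two-point estimate with the product structure $G(z)G(w)$ when $|z-w|$ is bounded below. Given these, $\E{m_\eta^i(f)} = c\int f(z)G(z)\,dz$ and the $L^2$ bound forces convergence in probability of $m_\eta^i(f)$ along with uniqueness of the limit. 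The ratio of the one-point functions for $\CR$ versus $d(z,\eta)$ is governed by the conformal map straightening the relevant side of the curve near $z$, and the Koebe distortion theorem pins down the constant: $\CR(z,\HH\setminus\eta) = d(z,\eta\cup\R)\cdot(1+o(1))\cdot(\text{a universal factor})$ in the regime that dominates the content, so that $m_\eta^i = K_\kappa \hat m_\eta$ for an explicit $K_\kappa$ coming from comparing the normalisations in \eqref{eq:hatm} and \eqref{eq:kkappa}.

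The equality $m_\eta^L = m_\eta^R$ then follows for free: both are shown to equal $K_\kappa\hat m_\eta$ almost surely, because $\hat m_\eta$ is a single intrinsic object and the convergence is in probability (hence the limits agree a.s.\ after passing to subsequences). Here one must be a little careful that the limiting measures are honestly supported on $\eta$ and give no mass to $\eta\cap\R=\{0\}$; this is handled by the one-point estimate vanishing near the boundary. The case $\kappa=4$ (i.e.\ $\gamma=2$, $d=3/2$) is the delicate endpoint and I expect it to be the main obstacle: the SLE Green's function and the Minkowski content estimates degenerate as $\kappa\to 4$, and the constant $K_\kappa$ may blow up or vanish. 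Following the strategy flagged in the introduction, I would prove the result first for $\kappa\in[0,4)$ with quantitative control on all constants, and then obtain $\kappa=4$ by a limiting argument, using the explicit form of $K_\kappa$ together with a uniform (in $\kappa$) moment bound to pass to the limit. Checking that this limit commutes with the content construction — in particular that the $\kappa=4$ content measure is the weak limit of the $\kappa\to 4$ ones — is the technically heaviest point.
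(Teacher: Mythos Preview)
Your overall strategy (first/second moment estimates in the style of \cite{LawlerRezaei}) for the existence of the limit is correct in outline, but there is a genuine gap in how you obtain $m_\eta^L = m_\eta^R$. You write that this ``follows for free'' because both equal $K_\kappa\hat m_\eta$, yet this is circular: your one-point estimate only gives $\P(\CR(z,\HH\setminus\eta)<u,\,A^i(z))\sim c_i\,u^{2-d}G(z)$ with \emph{a priori} side-dependent constants $c_L,c_R$, and nothing in your sketch forces $c_L=c_R$. Koebe distortion only yields $\tfrac14\CR\le d(z,\eta)\le \CR$, not an asymptotic equality, so the ``universal factor'' you invoke between $\CR$ and distance does not exist as a deterministic number --- it depends on the local shape of $\eta$ near $z$. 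The paper supplies precisely the missing ingredient: under the two-sided radial SLE measure (the Girsanov tilt making $\eta$ pass through $z$), the angle $\theta_t$ of the marked boundary point converges exponentially fast to the stationary density $(c_*/2)\sin(\theta)^{8/\kappa}$, which is \emph{symmetric} about $\pi/2$. This symmetry gives $\int_0^\pi \sin\theta\,\P_\theta(\mathcal A^L)\,d\theta=\int_0^\pi \sin\theta\,\P_\theta(\mathcal A^R)\,d\theta$ and hence $c_L=c_R=\tfrac12$ (Lemma~\ref{L:onepointmain}). The identification of $m_\eta^L,m_\eta^R,K_\kappa\hat m_\eta$ is then done not by a pointwise $\CR$-versus-distance comparison, but by showing each satisfies the same characterisation --- adapted, continuous, increasing, with conditional expectation $\tfrac12\int |g_t'|^{2-d}G(g_t(\cdot)-W_t)$ --- and invoking the uniqueness of natural parametrisation from \cite{LawlerSheffield}.

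A secondary point: your worry about $\kappa=4$ is misplaced for this theorem. The one-point lemma and the entire content argument work uniformly on $(0,4]$; the SLE Green's function $G(z)=c_\kappa^*(2\Im z)^{d-2}(\sin\arg z)^{8/\kappa-1}$ and the constant $K_\kappa$ are perfectly regular at $\kappa=4$. The delicate $\kappa\nearrow 4$ limiting procedure in this paper is needed only for the critical \emph{Liouville} measure statement (Theorem~\ref{T:main}), where the factor $2/(4-\kappa)$ genuinely blows up, not for the Minkowski content itself.
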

In this paper, we give a proof based on the results if \cite{LawlerRezaei}, however a possible proof of this statement may follow by extending Lemma 13 of \cite{KW} to to an almost sure result.

Now, take $\Gamma$ a GFF  on $\HH$ with free boundary conditions: \rev{see Definition \ref{d:freeGFF}}.  Let $\eta$ be an SLE$_\kappa$ \rev{parametrised by half-plane capacity} with \rev{$\kappa \in (0,4]$} as above, and for $T>0$ let $f_T$ be the centred Loewner map from $\HH\backslash \eta([0,T])$ to $\HH$. Finally define
\begin{align}\label{e:GT}
\Gamma^T:= \Gamma\circ f_T^{-1} +\left(\frac{ 2}{\gamma } + \frac{\gamma}{2}\right) \log\left|\left(f_T^{-1}\right)'\left(\cdot\right)\right|\end{align}
with $\gamma=\sqrt{\kappa}$. As discussed above, the $\log|(f_T^{-1})'(\cdot)|$ term is included so that Gaussian multiplicative chaos measures for $\Gamma^T$ will correspond to images of Gaussian multiplicative chaos measures for $\Gamma$ (see footnote \ref{coc}). 

When \rev{$\kappa\in (0,4]$ and $\gamma<2\sqrt{2(1+\kappa/8})$ (in fact, we will only ever consider $\gamma\in (0,2]$)} we define the $\gamma$-Liouville measure associated to $\eta$ as the Gaussian multiplicative chaos measure for $\Gamma$ with reference measure $m_\eta$. That is, we set 
\begin{align}\label{def:numeta}
\rev{\mu_{\Gamma}^{\scriptscriptstyle{\gamma/2}}[m_\eta](dz) }:= \lim_{\epsilon \to 0} \mu_{\Gamma,\eps}^{\scriptscriptstyle{\gamma/2}}[m_
\eta](dz) := \lim_{\eps\to 0} e^{\frac{\gamma }{2 } \Gamma_\epsilon(\rev{z})}\epsilon^{\frac{\gamma^2 }{8 }} m_{\eta}(dz),
\end{align}
\rev{where $\Gamma_\epsilon$ is the (reflected) circle average of $\Gamma$ at scale $\epsilon$, see Section \ref{ss:gff}}, and the limits above are in probability; this will be justified in Section \ref{ss:gmc}. \rev{Although we will always take the chaos parameter to be equal to $\gamma/2$, we use the above notation (i.e.\,carrying $\gamma/2$ in the superscript) so as to remain consistent with the literature. 

We will also use the notation $\mu_\Gamma^{\scriptscriptstyle{\gamma/2}}[\mathfrak{m}]$ for chaos with respect to a more general reference measure $\mathfrak{m}$ later on and this will be defined exactly as above with $m_\eta$ replaced by $\mathfrak{m}$.}

When $\gamma\in (0,2)$ we can further define the Liouville measure on $\R$ \rev{(with Lebesgue reference measure)} by 
\begin{align}\label{def:nuleb}
\rev{\nu^{\gamma}_{\Gamma}(dx)} := \lim_{\epsilon \to 0}  \nu^{\gamma}_{\Gamma,\eps}(dx):=
\lim_{\eps\to 0} e^{\frac{\gamma }{2} \Gamma_\epsilon(x)}\epsilon^{\frac{\gamma^2 }{4}} dx,
\end{align}
with limits again in probability. The normalisation by $\eps^{\gamma^2/4}$ rather than $\eps^{\gamma^2/8}$ is because, when restricted to the boundary, the free boundary Gaussian free field has a covariance with two times a logarithmic divergence on the diagonal. 



Finally, when $\gamma=2$ we define the critical Liouville measure on $\R$ by the limit in probability
\begin{align}\label{e.critical_measure}
 \nu_{\Gamma}^{\mathrm{crit}}:= \lim_{\gamma\uparrow 2} \frac{\nu^{\gamma}_{\Gamma}}{2(2-\gamma)}.
\end{align}
{Thanks to the results in \cite{APS0,APS2}, in particular by \cite[Section 4.1.2]{APS2},} this is equal to the perhaps more standard definition of critical chaos:
\begin{align*}
\nu_{\Gamma}^{\mathrm{crit}}
=\lim_{\eps\to 0} \left(-\frac{1}{2}\Gamma_\eps(x)+\log(1/\eps)\right)e^{\Gamma_\eps(x)-\log(1/\eps)} dx.
\end{align*}
{The existence of all the measures above is addressed in Section \ref{ss:gmc}: see Definitions \ref{d:gmc} and \ref{d:gmcboundary}.}\\

The main result of this paper is an elementary proof of the following theorem. \rev{It is a new and much simpler proof of Sheffield's result showing equality of left and right quantum length, and Benoist's proof of equality with chaos with respect to Minkowski content. The explicit constants appearing the theorem are identified for the first time.} \rev{From here on in, we set $\R^+:=[0,\infty)$ and $\R^-:=(-\infty,0]$.}
\begin{thm} Let $\kappa \in (0,4]$, $\gamma=\sqrt{\kappa}$, and $\eta$ be an SLE$_\kappa$ curve from $0$ to $\infty$ in $\HH$. For $T>0$ let $f_T$ be the centred Loewner map from $\HH\backslash \eta([0,T])$ to $\HH$. Take $\Gamma$ an independent free boundary GFF on $\HH$ as in Definition \ref{d:freeGFF}. Then $\mu_\Gamma^{\scriptscriptstyle{\gamma/2}}[m_\eta]$ is well defined, and we have that a.s.\,for any $0<r<s\le T$, 
\begin{align*} 
\mu_{\Gamma}^{\scriptscriptstyle{\gamma/2}}[{m}_\eta] \left(\eta\left([r,s]\right)\right)= \frac{2 }{(4-\kappa)(1-\tfrac{\kappa}{8}) } \nu^{\gamma}_{\Gamma^{T}}\left(f_{T}\left(\eta\left([r,s]\right)\cap \R^-\right)\right) = \frac{2 }{(4-\kappa) (1-\tfrac{\kappa}{8})}\nu^{\gamma}_{\Gamma^{T}}\left(f_{T}\left(\eta\left([r,s]\right)\cap \R^+\right)\right).
\end{align*}
Furthermore, for $\kappa=4$ and $\gamma=2$ we have that a.s.\,for any $0<r<s<T$ 
\begin{align*}
\mu_{\Gamma}^{\scriptscriptstyle{\gamma/2}} [{m}_\eta]\left(\eta\left([r,s]\right)\right)= 2\nu_{\Gamma^{T}}^{\mathrm{crit}}\left(f_{T}\left(\eta\left([r,s]\right)\cap \R^-\right)\right) =  2\nu_{\Gamma^{T}}^{\mathrm{crit}}\left(f_{T}\left(\eta\left([r,s]\right)\cap \R^+\right)\right).
\end{align*}
\label{T:main}
\end{thm}
\begin{figure}[h!]
\includegraphics[width=0.7\textwidth]{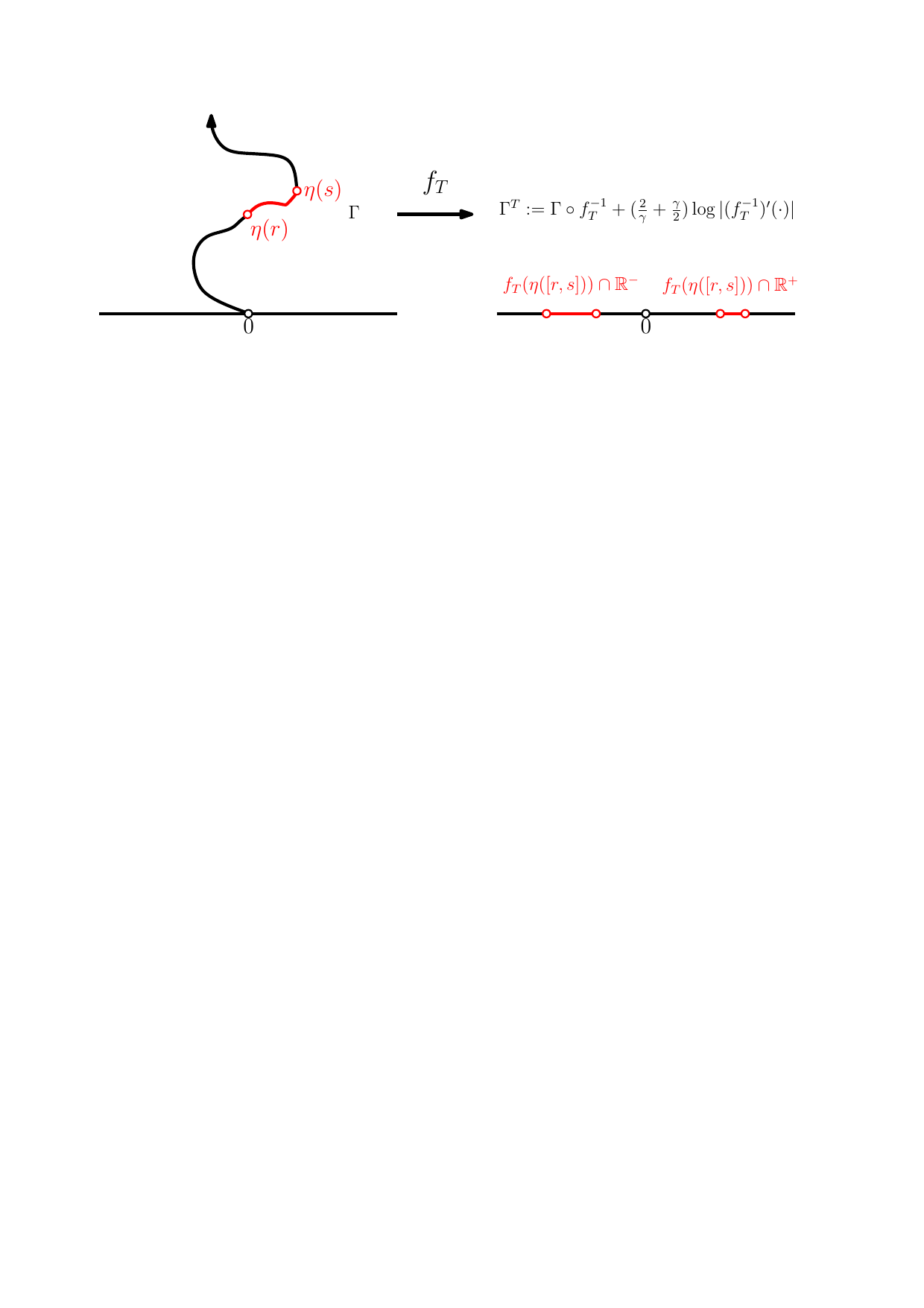}
\caption{We demonstrate that the $\gamma$-Gaussian multiplicative chaos measures for the fields $\Gamma$ of $f_{T}(\eta([r,s])\cap \R^+)$ and $f_{T}(\eta([r,s])\cap \R^-)$ coincide. Moreover, both measures are equal to a constant times the $\gamma/2$-Gaussian multiplicative chaos measure of $\eta([r,s])$, with the reference measure provided by the conformal Minkowski content of $\eta$ for the field $\Gamma^T$. }
\label{f.1}
\end{figure}

Let us relate this back to our discussion at the very beginning. In \cite{Sheffield}, Sheffield defines the left (respectively right) quantum boundary length of $\eta([r,s])$ for any $r< s\le T$, by\footnote{This is consistent in $r<s\le  T$ by footnote \ref{coc}.}
\begin{align*}\nu^{\gamma}_{\Gamma^{T}}(f_{T}(\eta([r,s])\rev{)}\cap \R^-) \quad \text{(respectively } \nu^{\gamma}_{\Gamma^{T}}(f_{T}(\eta([r,s])\rev{)}\cap \R^+)\text{)},
\end{align*}
 \rev{see Figure \ref{f.1}. When $\kappa\in (0,4)$,} the equality of these left and right quantum lengths is one of the important consequences of the remarkable but difficult argument in \cite{Sheffield}. It was extended to $\kappa=4$ in \cite{HP}, using a version of Sheffield's argument.

The equality with $\mu_{\Gamma}^{\scriptscriptstyle{\gamma/2}}[m_\eta] (\eta([r,s]))$ up to an unknown constant was shown for $\gamma<2$ in \cite{Benoist}. Here we identify the constant, and use this prove the result when $\gamma=2,\kappa=4$.  However, we believe that the elementary new proof giving equality of left and right quantum boundary lengths is the most significant contribution of our article.

\subsection{Idea of proof}\label{ss:proofidea}
The first result we prove in this paper is that the right and left conformal Minkowski contents exist and coincide (Theorem \ref{T:onesidedmink}). This follows the same path taken in \cite{LawlerRezaei}. The main difference is that we need to prove the following: conditionally on $\eta$ coming close to a given point $z\in \HH$, $\eta$ is almost as likely to go to the right or to the left of it. We achieve this thanks to the fact that, when the curve approaches the point $z$, the angle of $f_T(z)$ converges exponentially fast to a stationary measure which is symmetric.

The second result (Theorem \ref{T:main}), the core of this paper, is proved using the following strategy.
\begin{enumerate}
    \item First, we introduce a more useful approximation of the one-sided conformal Minkowski content. Namely, we show that for any $q\in\{L,R\}$,  as $\delta \to 0$,
    \begin{align*}
\sigma^{q,\delta}_\eta(dz):
= \delta\, \CR\left(z,\mathbb{H}\setminus \eta\right)^{d-2+\delta} \1_{A^{q}(z)} dz \to \left(1-\frac{\kappa}{8}\right) m_\eta(dz),
    \end{align*}
    where $\CR(z,D)$ for $D$ simply connected and $z\in D$ is the conformal radius of $z$ in $D$ (that is, $|f'(0)|$ where $f: \D\to D$ is the unique conformal isomorphism from the unit disc to $D$ with $f(0)=z$).
    The definition of this measure is inspired by \cite{ALS1,SSV}.
    \item Then, we apply the change of coordinates formula for the Liouville measure to prove that almost surely as measures in $\HH \backslash f_T(\eta[T,\infty)\cup \R)$, for every fixed $
   \delta>0$,
    \begin{align}\label{e.approximatemain}
 (f_T)_{*}\, \mu_{\Gamma}^{\scriptscriptstyle{\gamma_{\delta}/2}}\left[\sigma^{q,\delta}_\eta\right] = \frac{\delta }{\tilde \delta }\, \mu_{\Gamma^T}^{\scriptscriptstyle{\gamma_\delta/2} }\left[\1_{A^{q}_T}(2\Im(\cdot))^{\gamma_\delta^2/8}\Leb_{\R}^{\tilde \delta}\right].
    \end{align}
   Here $\tilde \delta = \delta(1-\kappa/4) + o(\delta)$ and $\gamma_\delta=\gamma+O(\delta)$  are explicit functions, $A^{q}_T$ is the set of $y\in \HH$ that belong to the right (if $q=R$) or  left (if $q=L$) of the image of $\eta$ under $f_T$,  and 
    \begin{align*}
\Leb_{\R}^{\delta}(\rev{dz})= \delta(2 \Im(\rev{z}))^{-1+\delta} d\rev{z} \to\frac{1 }{2 } \Leb_{\R} \ \ \text{ as } \delta \to 0
    \end{align*}
    where $\Leb_\R$ is Lebesgue measure on $\R$. \rev{Recall that the GMC with a general reference measure $\mathfrak{m}$ is defined exactly as in 
\eqref{def:numeta} with $\mathfrak{m}\leftrightarrow m_\eta$; see also Definition \ref{d:gmc}.}
      \item To finish the proof for $\kappa \in (0,4)$, we want to take limits as $\delta\to 0$ on both sides of \eqref{e.approximatemain}. For this we need to prove that the operation of taking Liouville measures is continuous in both the reference measure and in $\gamma$. \rev{Once this is shown, and given step (1), it is clear that the left-hand side of \eqref{e.approximatemain} converges to $(1-\kappa/8)(f_T)_*(\mu_\Gamma^{\scriptscriptstyle{\gamma/2}}[m_\eta])$. 
      
      It also holds that the right-hand side of \eqref{e.approximatemain} converges to $(2/(4-\kappa))\nu^\gamma_{\Gamma^T}$, restricted to $\R^+$ if $q=R$ or restricted to $\R^-$ if $q=L$, which immediately yields Theorem 
    \ref{T:main}. 
    
    Justifying this convergence of the right-hand side, however, is slightly more subtle. The first important observation is that $\mu_\Gamma^{\scriptscriptstyle{\gamma/2}}[\mathfrak{m}]$ is not actually continuous in $\mathfrak{m}$ if the support of $\mathfrak{m}$ is allowed to approach the real line. Instead, the object that it is truly continuous is the classical Wick exponential, which we denote by $\tilde{\mu}_\Gamma^{\scriptscriptstyle{\gamma/2}}[\mathfrak{m}]$, where the multiplicative normalisation in the approximation is $\exp(-(\gamma^2/8)\var(\Gamma_\eps(\cdot)))$, i.e.\, according to variance, rather than $\eps^{\gamma^2/8}$: see Definition \ref{d:gmc}.  
 Switching from $\mu$ to $\tilde\mu$ on the right-hand side of \eqref{e.approximatemain} exactly cancels the factor $(2\Im(\cdot))^{\gamma_\delta^2/8}$ in the reference measure, and introduces a factor $\exp(-l(\cdot)\gamma_\delta^2/8)$ where $l(z)$ is a nice continuous function arising from the variance of the free field and defined in Section \ref{ss:gff}. Combining with the convergence $$\mathbf{1}_{A_T^R} \mathcal{L}_\R^\delta\to \frac12 \mathcal{L}_\R|_{\R^+} \text{ (resp. }\mathbf{1}_{A_T^L} \mathcal{L}_\R^\delta\to \frac12 \mathcal{L}_\R|_{\R^-}\text{),}$$ the fact that $\tilde\delta/\delta\to (1-\kappa/4)$ as $\delta\to 0$, and finally the observation that $$\tilde{\mu}^{\scriptscriptstyle{\gamma/2}}_{\Gamma^T}[e^{-l(\cdot)\gamma^2/8}\mathcal{L}_\R]=\nu_{\Gamma^T}^\gamma$$  concludes the proof.}
 \item For $\kappa = 4$, it suffices to construct a nice coupling between $\Gamma$ and $\SLE_\kappa$ for a sequence of $\kappa$ converging to $4$,  so that we can take a limit as $\kappa\nearrow 4$ in the first part of Theorem \ref{T:main}. This part of the proof is the most technical part of the paper.
\end{enumerate}

The paper is organised as follows. In Section \ref{s.preliminaries}, we lay the groundwork by presenting key definitions and classical results from existing literature. Next, Section \ref{s.Minkowski_content} establishes Theorem \ref{T:confminkone} and tackles point (1). Section \ref{s.convergence_chaos} addresses point (3) by giving sufficient conditions to show convergence of chaos measures when both $\kappa$ and $\gamma$ and moving and the chaos is with respect to a free-boundary GFF. Points (1), (2) and (3) are put together in Section \ref{s.subcritical}, to show Theorem \ref{T:main} in the subcritical regime. Lastly,  Section \ref{s.critical} addresses the critical case.

\subsubsection*{Acknowledgments} \rev{We would like to thank Wendelin Werner for pointing out a different potential strategy for the proof of Theorem \ref{T:onesidedmink}. We also thank the anonymous referees for their encouraging comments and suggestions which have helped us to improve our exposition. }

The research of E.P.\,is supported by UKRI Future Leader's Fellowship MR/W008513. The research of A.S.\,is supported by grants ANID FB210005, FONDECYT regular 1240884 and ERC 101043450 Vortex, and was supported by FONDECYT iniciaci\'on de investigaci\'on N$^o$ 11200085. An important part of this work was done during a visit to CMM by E.P. and a visit to Durham by A.S.

\section{Preliminaries}\label{s.preliminaries}

\subsection{Free boundary Gaussian free field}\label{ss:gff}

We first introduce the free boundary Gaussian free field (GFF) in $\HH$ (with a specific choice of additive constant), as in \cite[Section 6]{BP}. 

\begin{defn}[Free boundary GFF]
\label{d:freeGFF}
The free boundary GFF $\Gamma$ on $\HH$ (with zero average on the upper unit semicircle) is the random centred, Gaussian field on $\HH$ whose covariance is given by 
\[ \Gf(z,w)=-\log|z-w|-\log|z-\bar{w}| +2\log|z|\I{|z|\ge 1}+2\log|w|\I{|w|\ge 1}\] for $z\ne w \in \HH$.
That is, $((\Gamma,f))_{f\in C_c^\infty(\HH)}$ is a centred Gaussian process with $$
\cov\left((\Gamma,f)(\Gamma,g)\right)=\iint_{\HH\times \HH} f(z) \Gf(z,w)g(w) dz dw,
 \quad \forall f,g \in C_c^\infty(\HH).
 $$
\end{defn}

As explained in \cite[Section 6.1]{BP}, there exists a version of $\Gamma$ that almost surely defines an element in $H^{-1}_{\mathrm{loc}}(\HH)$, \rev{the local Sobolev space of index $-1$} (and in particular, a random distribution on $\HH$). 

Moreover, let $\Gamma^\C$ be a whole plane GFF with zero average on the unit circle; that is, the centred Gaussian process indexed by $C_c^\infty(\C)$ with 
\[\cov\left((\Gamma^\C,f)(\Gamma^\C,g)\right)=-\iint_{\C\times \C} f(z) g(w) \left(\log(|z-w|)-\log(|z|)\I{|z|\ge 1}-\log(|w|)\I{|w|\ge 1}\right) dz dw \] for $f,g\in C_c^\infty(\C)$. Then, by \cite[Definition 6.24]{BP} or a direct calculation, ($\sqrt{2}$ times) the \rev{even} part of $\Gamma^\C$, defined by
$$(\Gamma,f):= (\Gamma^\C, f+f^*)/\sqrt{2} \text{ for } f\in C_c^\infty(\C)$$ 
($f^*(z):=f(\bar{z}))
$
has the law of a free boundary GFF in $\HH$ when viewed as a distribution on $\HH$.
In particular, it is known that for the whole plane GFF, one can almost surely define the circle average $\Gamma^\C_\eps(z)$ by approximation \rev{(more precisely, as $\lim_{n\to \infty}(\Gamma^\C,f_n)$ with $f_n\in C_c^\infty(\C)$ approximating \rev{$\rho_\epsilon^z$, the} uniform measure on $\partial B(z,\eps)$)} for each $z\in \C,\eps>0$. Moreover, there is a version of this process that is a.s.\;locally jointly H\"{o}lder continuous in $z,\eps$, which follows from the case of GFF with zero boundary conditions, \cite{HMP}, and the fact that in any bounded domain we can decompose the whole plane GFF as a sum of a zero boundary condition GFF and a harmonic function \cite[Corollary 6.30]{BP}.
Thus, if $\Gamma$ is a free boundary GFF coupled to a whole plane GFF $\Gamma^\C$ as above, we can define the jointly locally H\"{o}lder continuous process 
$\Gamma_\eps^z := (1/{\sqrt{2}}) (\Gamma^\C_\eps(z)+\Gamma^\C_\eps(\bar{z}))$ for $z\in \overline{\HH}, \eps>0 $
coupled to $\Gamma$. We refer to this as the \textbf{reflected circle average process of} $\Gamma$. It follows from the definition that 
\begin{equation}\label{e:varGamma} \var(\Gamma_\eps(z))=-\log\eps -\iint_{\C\times\C} \log(|w-\bar v|)\rho_\eps^z(dw)\rho_\eps^{\bar{z}}(dv)+l_\eps(z),
\end{equation}
\rev{where $l_\eps(z)\to 4\log|z|\I{|z|\ge 1}=:l(z)$} as $\eps\to 0$.
Note that by harmonicity we have 
\begin{equation}\label{e:limvarGamma} 
\var(\Gamma_\eps(z))= \begin{cases}
    -\log(\eps)-\log(2\Im(z))+l_{\eps}(z) & \text{ if }\Im(z)>\eps, \\
    -2\log(\eps)+l_\eps(z) & \text{ if } z\in \R.
\end{cases}
\end{equation} 
Furthermore for fixed $z\in \overline{\HH},\eps>0$, $\Gamma_\eps(z)$ can be obtained as the almost sure (and $L^2(\P)$) limit of $(\Gamma,f_n)$ where $(f_n)_n$ are a sequence of functions in $C_c^\infty(\HH)$ approximating uniform measure on the piecewise smooth curve $C_z^\eps$ made up of $\partial B(z,\eps)\cap \overline{\HH}$ and the reflection of $\partial B(z,\eps)\setminus\HH$ into $\HH$. In particular, $(\Gamma_\eps(z))_{\eps,z}$ is measurable with respect to $\Gamma$.

\begin{remark} We see from the above that $\var(\Gamma_1(\rev{0}))=0$. In other words, we have chosen the additive constant for the free boundary GFF so that it has \emph{average zero on the upper unit semicircle}.
\end{remark}

\subsection{Gaussian multiplicative chaos (GMC)}\label{ss:gmc}
This article concerns various Gaussian multiplicative chaos measures associated to a free boundary GFF $\Gamma$ as in Definition \ref{d:freeGFF}. Namely, measures of the form \[ \rev{\mu_\Gamma^{\scriptscriptstyle{\gamma/2}}[\mathfrak{m}](dz)  = ``  e^{\tfrac{\gamma}{2}
\Gamma(z)} \mathfrak{m}(dz) ",}
\]
where $\mathfrak{m}$ is a reference \rev{measure} and $\gamma>0$. 

\rev{From here on in, {\bf we will only consider Radon measures}. In particular, whenever we introduce a measure, it should be assumed that it is a Radon measure.} 

\rev{The above GMC measures} can be constructed by approximation \rev{(we will use the reflected circle average approximation to the free boundary GFF discussed in Section \ref{ss:gff})} due to the theory developed in \cite{Kahane,DS,RV,Ber}. In particular, we have the following result/definition. For a \rev{(random)} reference measure $\mathfrak{m}$ on $\HH$ or a collection of \rev{(random)} measures $(\mathfrak{m}_\lambda)_{\lambda\in \Lambda}$, we define the condition ($\mathcal{E}_\beta$) as follows: 
\begin{description}
\item[($\mathcal E_\beta$)]There exists $\tilde \beta > \beta$ such that for any compact set $K\subseteq \overline{\HH}$ 
\begin{align}\label{ebeta}
\sup_{\lambda \in \Lambda} \E{\iint_{K\times K} e^{\tilde \beta \Gf(z,w)} \mathfrak{m}_\lambda(dz) \mathfrak{m}_\lambda(dw)} <\infty,
\end{align}
where for a single measure $\mathfrak{m}$ we set $(\mathfrak{m}_\lambda)_{\lambda\in \Lambda}=\{\mathfrak{m}\}$.
\end{description}

\begin{defn}\label{d:gmc}
Suppose that $\Gamma$ is a free boundary GFF as in Definition \ref{d:freeGFF} with law $\mathbb{P}$. Let $\mathfrak{m}$ be a  measure on $\HH$ 
satisfying $(\mathcal{E}_\beta)$ for some $\beta<2$. Then for $\gamma<2\sqrt{2\beta}$ the limits
\begin{align}\rev{\mu_\Gamma^{\scriptscriptstyle{\gamma/2}}[\mathfrak{m}](dz):= \lim_{\eps\to 0} \mu_{\Gamma,\eps}^{\scriptscriptstyle{\gamma/2}}[\mathfrak{m}](dz):=e^{\frac{\gamma}{2} \Gamma_\eps(z)}\eps^{\gamma^2/8} \mathfrak{m}(dz)}\label{e:nu} \\ 
\rev{\tilde\mu_\Gamma^{\scriptscriptstyle{\gamma/2}}[\mathfrak{m}](dz):= \lim_{\eps\to 0} \tilde\mu_{\Gamma,\eps}^{\scriptscriptstyle{\gamma/2}}[\mathfrak{m}](dz):=e^{\frac{\gamma}{2} \Gamma_\eps(z)-\frac{\gamma^2}{8}\var(\Gamma_\eps(z))} \mathfrak{m}(dz) }\label{e:tildenu}
\end{align}
exist in probability. The topology of this convergence for $\mu_\Gamma^{\scriptscriptstyle{\gamma/2}}[\mathfrak{m}]$ is the weak topology for measures on $K$, for any compact $K\subset\HH$, and for $\tilde\mu_\Gamma^{\scriptscriptstyle{\gamma/2}}[\mathfrak{m}]$ it is the vague topology for measures on $\overline{\HH}$.
\end{defn}

Let us briefly explain why the convergence in Definition \ref{d:gmc} follows from results in the aforementioned literature. 
For any compact $K\subset \HH$, $\Gamma_\eps(z)$ is equal to the $\eps$-circle average of $\Gamma$ around $z$ for all $z\in K$ if $\eps$ is small enough. Hence \cite[Theorem 1.1]{Ber}\footnote{Here we use that the condition $(\mathcal{E}_\beta)$ on $K$ implies the dimensionality condition on $\mathfrak{m}$ assumed in \cite{Ber}.} gives the convergence in probability of $\mu_{\Gamma,\eps}^{\scriptscriptstyle{\gamma/2}}[\mathfrak{m}]$ to a limit $\mu_{\Gamma}^{{\scriptscriptstyle{\gamma/2}}}[\mathfrak{m}]$ (respectively $\tilde\mu_{\Gamma,\eps}^{{ \scriptscriptstyle{\gamma/2}}}[\mathfrak{m}]$ to a limit $\tilde\mu_{\Gamma}^{\scriptscriptstyle{ \gamma/2}}[\mathfrak{m}]$)
with respect to the weak topology of measures on any such $K$; and the convergence of the total mass $\mu_{\Gamma,\eps}^{\scriptscriptstyle{ \gamma/2}}[\mathfrak{m}](K)$ to $\mu_{\Gamma}^{\scriptscriptstyle{ \gamma/2}}[\mathfrak{m}](K)$ (respectively $\tilde\mu_{\Gamma,\eps}^{\scriptscriptstyle{ \gamma/2}}[\mathfrak{m}](K)$ to $\tilde\mu_{\Gamma}^{\scriptscriptstyle{ \gamma/2}}[\mathfrak{m}](K)$).  In particular, this justifies the definition of $\mu_\Gamma^{\scriptscriptstyle{\gamma/2}}[\mathfrak{m}]$.

For the convergence of $\tilde{\mu}_\Gamma^{\scriptscriptstyle{\gamma/2}}[\mathfrak{m}]$ with respect to the vague topology for measures on $\overline{\HH}$ , we fix $n\in \N$ and invoke the above with $K=D_n\setminus \{w: \Im(w)\rev{<} \delta\}$ for each $\delta>0$, where $D_n:=B(0,2^{n})\cap \overline{\HH}$. We also observe that for any $\eps,\delta>0$
\[\mathbb{E}\left(\tilde\mu_{\Gamma,\eps}^{\scriptscriptstyle{\gamma/2}}[\mathfrak{m}]\left(D_n\cap \{w:\Im(w)\leq \delta\}\right)\right)= \mathfrak{m}\left(D_n\cap \{w:\Im(w)\le \delta\}\right),\] where the right-hand side converges to $0$ as $\delta\to 0$ since $\mathfrak{m}$ is a measure on $\HH$.  This implies\footnote{{See \cite[Section 6]{Ber} for the detailed argument.}} that $\tilde\nu_{\Gamma,\eps}^{\mathfrak{m},\gamma}(D_n)\to \sup_{\delta>0} 
\tilde\mu_\Gamma^{\scriptscriptstyle{\gamma/2}}[\mathfrak{m}](D_n\setminus \{w: \Im(w)\rev{<} \delta\})<\infty$ in probability and $L^1(\mathbb{P})$ as $\eps\to 0$. This in turn implies that \rev{$\tilde \mu_{\Gamma,\eps}^{\scriptscriptstyle{\gamma/2}}[\mathfrak{m}]\to \tilde \mu_{\Gamma}^{\scriptscriptstyle{\gamma/2}}[\mathfrak{m}]$} in probability with respect to the weak convergence of measures on $D_n$. 
Since $n$ was aribtrary we get the convergence with respect to the vague topology for measures on $\overline{\HH}$. 
\medskip

We also have the following ``change-of-coordinates'' formula, that describes how the measures $\mu_\Gamma^{\scriptscriptstyle{\gamma/2}}[\mathfrak{m}]$ behave when applying a conformal map. \rev{Recall that $f_*\mathfrak{m}$, the pushforward of a measure $\mathfrak{m}$ by a function $f$, is defined by $f_*\mathfrak{m}(B)=\mathfrak{m}(f^{-1}(B))$ for all $B$.}

\begin{lemma}\label{lem:coc}
Suppose that $H\subset \HH$ is a simply connected domain, $f:H\to \HH$ is a conformal isomorphism, and $\mathfrak{m}$ is a measure on $\HH$ satisfying $(\mathcal{E}_\beta)$, $\gamma<2\sqrt{2\beta}$. Then for any $K\subset \HH$ compact, we have
\[ f_* (\mu_\Gamma^{\scriptscriptstyle{\gamma/2}}[\mathfrak{m}]) = \mu_{\Gamma\circ f^{-1}}^{\scriptscriptstyle{\gamma/2}}[\mathfrak{m}'] \text{ as measures on $f(K\cap H)$} \text{ almost surely,} \]
where $\mathfrak{m}'(dz)=|(f^{-1})'(z)|^{\gamma^2/8}f_*\mathfrak{m}(dz)$, and $\mu_{\Gamma\circ f^{-1}}^{\scriptscriptstyle{\gamma/2}}[\mathfrak{m}']$ is well defined as the limit in \eqref{e:nu} on any such set ${f(K\cap H)}$.
\end{lemma}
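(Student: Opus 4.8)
\textbf{Proof plan for Lemma \ref{lem:coc}.} The plan is to reduce the statement to a pointwise comparison of the regularised measures $\nu_{\Gamma,\eps}^{\mu,\gamma}$ and to track exactly the conformal factors appearing in the reflected circle average approximation. First I would fix a compact $K\subset \HH$ and note that $f(K\cap H)$ is contained in a compact subset of $\HH$, so that for $\eps$ small enough all circle averages $\Gamma_\eps(z)$ for $z\in K\cap H$ and $(\Gamma\circ f^{-1})_\eps(w)$ for $w\in f(K\cap H)$ coincide with honest $\eps$-circle averages of the (interior) GFF, with no boundary reflection involved. This puts us in the classical interior setting. The key input is then the well-known near-conformal-covariance of the circle average: for $w=f(z)$ with $z$ in a compact subset of $H$, one has $(\Gamma\circ f^{-1})_\eps(w) = \Gamma_{\eps/|f'(z)|}(z) + o(1)$ as $\eps\to 0$, uniformly on compacts, where the error comes from comparing the pushforward of the circle $\partial B(z,\eps/|f'(z)|)$ under $f$ with the true circle $\partial B(w,\eps)$. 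Equivalently, $(\Gamma\circ f^{-1})_{\eps|f'(z)|}(w) = \Gamma_\eps(z)+o(1)$.

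Next I would run the change of variables in the regularised approximation. Writing $w=f(z)$, a change of variables in the integral defining $f_*(\nu_{\Gamma,\eps}^{\mu,\gamma})$ shows
\begin{align*}
f_*\big(\nu_{\Gamma,\eps}^{\mu,\gamma}\big)(dw) = e^{\frac{\gamma}{2}\Gamma_\eps(f^{-1}(w))}\eps^{\gamma^2/8}\, f_*\mu(dw).
\end{align*}
On the other hand, using $\Gamma_\eps(z) = (\Gamma\circ f^{-1})_{\eps|f'(z)|}(w)+o(1)$ and the substitution $\eps' = \eps|f'(z)|$, i.e. replacing the radius at $w$ accordingly, the right-hand side becomes, up to $1+o(1)$ multiplicative errors uniform on $f(K\cap H)$,
\begin{align*}
e^{\frac{\gamma}{2}(\Gamma\circ f^{-1})_{\eps'}(w)}\Big(\frac{\eps'}{|f'(f^{-1}(w))|}\Big)^{\gamma^2/8} f_*\mu(dw) = \nu_{\Gamma\circ f^{-1},\eps'}^{\mu',\gamma}(dw),
\end{align*}
with $\mu'(dw) = |f'(f^{-1}(w))|^{-\gamma^2/8} f_*\mu(dw) = |(f^{-1})'(w)|^{\gamma^2/8} f_*\mu(dw)$, as claimed. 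Since the $o(1)$ errors are uniform on the relevant compact set, the two families of measures have the same limit in probability; Definition \ref{d:gmc} applies to $\nu_{\Gamma\circ f^{-1}}^{\mu',\gamma}$ because $\mu'$ and $f_*\mu$ are mutually absolutely continuous with bounded density on compacts, so $(\mathcal E_\beta)$ is inherited from $\mu$ (the factor $e^{\tilde\beta \Gf}$ is itself controlled by a conformal factor and a bounded Green's-function comparison on compacts). This gives the almost sure identification of the limits on $f(K\cap H)$.

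The main obstacle is the uniform control of the $o(1)$ error in the circle-average comparison, together with making sure it does not get amplified by the exponential: one needs $\sup_{z}|(\Gamma\circ f^{-1})_{\eps|f'(z)|}(f(z)) - \Gamma_\eps(z)|\to 0$ in probability, uniformly over $z$ in a compact subset of $H$, which follows from the a.s. local joint Hölder continuity of the circle average process in the spatial and scale variables (recorded in Section \ref{ss:gff}) plus the fact that $f(\partial B(z,\eps|f'(z)|))$ is a curve lying within Hausdorff distance $O(\eps^2)$ of $\partial B(w,\eps)$ with comparable normalisation. A clean way to organise this, avoiding reproving these estimates, is to cite the standard statement (e.g. from \cite{DS} or \cite{Ber}) that GMC is conformally covariant with the stated $|(f^{-1})'|^{\gamma^2/8}$ Jacobian correction in the interior, and then simply note that our reflected circle average coincides with the interior circle average on the compact sets in question, so the formula transfers verbatim. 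The boundary of $\HH$ plays no role here precisely because $K$ and $f(K\cap H)$ stay away from $\R$.
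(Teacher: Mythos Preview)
Your approach via direct comparison of circle averages carries the right intuition and would ultimately work, but it differs from the paper's argument and leaves two technicalities not fully closed. The paper avoids the circle-average error estimates entirely by switching to the orthonormal-basis approximation of GMC from \cite[Section~5]{Ber}. It first treats a zero-boundary GFF $\Gamma_0$ on $H$, for which $\nu_{\Gamma_0}^{\mu,\gamma}$ is also the limit of $e^{\frac{\gamma}{2}\Gamma_0^N - \frac{\gamma^2}{8}\var(\Gamma_0^N)}\CR(\cdot,H)^{\gamma^2/8}\mu$ with $\Gamma_0^N=\sum_1^N(\Gamma_0,f_i)f_i$. Since $(f_i\circ f^{-1})$ is again an $H_0^1$-orthonormal basis and $\CR(f^{-1}(z),H)=|(f^{-1})'(z)|\CR(z,\HH)$, the change of variables is \emph{exact} at every finite $N$, so one just lets $N\to\infty$. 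The free-boundary case then follows by absolute continuity of $\Gamma$ with respect to $\Gamma_0$ on $(K\cap H)_\delta$ for each $\delta>0$, together with a monotone limit as $\delta\to 0$.

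Your route is more hands-on but pays in two places you flagged without fully resolving. First, after writing $\eps'=\eps|f'(z)|$ you land on an approximation with a $z$-dependent regularisation scale, which is not literally $\nu_{\Gamma\circ f^{-1},\eps'}^{\mu',\gamma}$; an extra argument (Hölder continuity of circle averages in both variables plus a uniform-integrability step) is needed to show this does not affect the limit. Second, your appeal to Definition~\ref{d:gmc} for the existence of $\nu_{\Gamma\circ f^{-1}}^{\mu',\gamma}$ is not quite licit, since that definition is stated for the free-boundary GFF $\Gamma$, whereas $\Gamma\circ f^{-1}$ has a different covariance; one needs the more general log-correlated convergence from \cite{Ber}. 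Both points can be fixed, and your final suggestion to cite conformal covariance from \cite{DS,Ber} and use that reflected and true circle averages agree on interior compacts is essentially correct --- but the paper's orthonormal-basis plus absolute-continuity argument bypasses all of this in a couple of lines.
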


\begin{remark}
Applying the above when $\mathfrak{m}=\mathrm{Leb}$ is Lebesgue measure, we obtain that   
\[ f_* \left(\mu_\Gamma^{\scriptscriptstyle{\gamma/2}}[\mathrm{Leb}]\right) = \mu_{\Gamma\circ f^{-1}}^{\scriptscriptstyle{\gamma/2}}[|(f^{-1})'(\cdot)|^{\gamma^2/8}\mathrm{Leb}]=\mu^{\scriptscriptstyle{\gamma/2}}_{\Gamma'}[\mathrm{Leb}]  \]
 almost surely, where $\Gamma'=\Gamma\circ f^{-1}+(2/(\gamma/2)+(\gamma/2)/2)\log|(f^{-1})'|$. This is how the ``change-of-coordinates'' formula for Liouville quntum gravity is usually stated in the literature, when $\Gamma$ is a zero boundary condition GFF on $H\subset \HH$, and we use the parameter $\gamma/2$ rather than $\gamma$. We, however, will need the statement for a free boundary GFF restricted to $\HH$ and a general reference measure $\mathfrak{m}$.
\end{remark}

\begin{proof}
We first prove the lemma when $\Gamma$ is replaced by a Gaussian free field $\Gamma_0$ with zero boundary conditions in $H$. 
In this case, we have, \cite[Section 5]{Ber}, that $\mu_{\Gamma_0}^{\scriptscriptstyle{\gamma/2}}[\mathfrak{m}]$ can also be defined as 	the limit in probability of \[
\mu_{{\Gamma_0},N}^{\scriptscriptstyle{\gamma/2}}[\mathfrak{m}]:=
e^{\tfrac{\gamma}2{\Gamma_0^N}(z) - \tfrac{\gamma^2}{8} \var ({\Gamma_0^N}(z))} \CR(z,H)^{\tfrac{\gamma^2}{8}} \mathfrak{m}(dz),
\] where
${\Gamma_0^N}(z) := \sum_{i=1}^N ({\Gamma_0},f_i) f_i$  for $f_n$ an orthonormal basis of $H_0^1(\rev{H})$, \rev{the closure of compactly supported functions in $H$ with respect to the Sobolev norm of index one}. We then have that 
\[ f_*(\mu_{{\Gamma_0},N}^{\scriptscriptstyle{\gamma/2}}[\mathfrak{m}])=
e^{\tfrac{\gamma}2({\Gamma_0^N}\circ f^{-1})(z) - \tfrac{\gamma^2}{8} \var ({\Gamma_0^N}\circ f^{-1}(z))} \CR(f^{-1}(z),H)^{\tfrac{\gamma^2}{8}} f_*\mathfrak{m}(dz),
\]
and since $\CR(f^{-1}(z),H)=\CR(z,\HH)|(f^{-1})'(z)|$, while $({\Gamma_0^N}\circ f^{-1})$ has the law of $\sum_{i=1}^N ({\Gamma'_0},e_i)e_i$ for ${\Gamma'_0}$ a zero boundary GFF in $\HH$, we can apply the result of \cite[Section 5]{Ber} again and let $N\to \infty$ to see that $f_*(\mu_{\Gamma_0}^{\scriptscriptstyle{\gamma/2}}[\mathfrak{m}])=\mu_{\Gamma_0\circ f^{-1}}^{\scriptscriptstyle{\gamma/2}}[\mathfrak{m}']$ as measures on $K\cap H$.

Now we move onto the general statement of the lemma. Since the free boundary GFF $\Gamma$ is absolutely continuous with respect to $\Gamma_0$ on 
\begin{align*}
\rev{(K\cap H)_\delta:=K\cap H\setminus \{z:d(z,\HH\setminus K \cup \HH\setminus H)<\delta\}}
\end{align*}
for any $\delta>0$, the result follows on $(K\cap H)_\delta$ for every $\delta>0$. On the other hand, $\mu_\Gamma^{\scriptscriptstyle{\gamma/2}}[\mathfrak{m}]$ is well-defined as a measure on $K$ and $\mu_\Gamma^{\scriptscriptstyle{\gamma/2}}[\mathfrak{m}](K\cap H)=\sup_{\delta>0} \mu_\Gamma^{\scriptscriptstyle{\gamma/2}}[\mathfrak{m}]((K\cap H)_\delta)$. Hence, $\mu_{\Gamma\circ f^{-1}}^{\scriptscriptstyle{\gamma/2}}[\mathfrak{m}']$ extends to a measure on $K\cap H$ which agrees with $f_*\mu_{\Gamma}^{\scriptscriptstyle{\gamma/2}}[\mathfrak{m}]$ almost surely. 
\end{proof} 

When $x\in \R$, $\Gamma_\eps(x)$ is simply the upper $\eps$ semicircle average of $\Gamma$ about $x$ and in particular we have $\var(\Gamma_\eps(x))=2\log(1/\eps)+l_\eps(x)$ as explained before (where $l_\eps(x)\to (4\log|x|)\I{|x|\ge 1}=l(x)$ as $\eps\to 0$). This brings us to the following definition. 
\begin{defn}\label{d:gmcboundary}
Suppose that $\Gamma$ is a free boundary GFF as in Definition \ref{d:freeGFF} with law $\mathbb{P}$. Then for $\gamma\in (0,2)$ we can define the Liouville measure (with Lebesgue reference measure) on $\R$ by 
\begin{align*}
\nu^{\gamma}_{\Gamma}(dx) := \lim_{\epsilon \to 0} \nu^{\gamma}_{\Gamma,\eps}(dx):=
\lim_{\eps\to 0} e^{\frac{\gamma }{2} \Gamma_\epsilon(x)}\epsilon^{\frac{\gamma^2 }{4}}dx.
\end{align*}
where the limit exists in probability with respect to the vague topology for measures on $\R$. 
When $\gamma=2$ we define the critical Liouville measure on $\R$ by 
\begin{align*}
{\nu_{\Gamma}^{\mathrm{crit}}(dx)}:= \lim_{\gamma\uparrow 2} \frac{\nu^{\gamma}_{\Gamma}}{2(2-\gamma)}
\end{align*}
\noindent where the limit exist in probability with respect to the vague topology on measures on $\R$.
\end{defn}
\rev{Notice that 
\begin{equation}\nu^\gamma_\Gamma=\tilde\mu_\Gamma^{\scriptscriptstyle{\gamma/2}}[e^{{(\gamma^2/8)}l(\cdot)}\mathcal{L}_\R],
\label{eq.relmunu}
\end{equation}
}
\rev{for $\mathcal{L}_\R$ the Lebesuge measure on $\R$,} and so the convergence when $\gamma\in [0,2)$ follows from the more general setting. See also\cite[Theorem 6.36]{BP} for a direct argument. For the existence of the limit as $\gamma\uparrow 2$, see \cite[Theorem 1.1 and Section 4]{APS2}. 
\medskip

\subsection{SLE} \label{SS:SLE}

In this section we give a very brief introduction to Schramm--Loewner evolutions (SLE). For much more detailed expositions see  \cite{LawC,SLEnotes,Kem17}. 

In this article, we will consider \textit{chordal} $\SLE_\kappa$ with $\kappa\in (0,4]$. {This is a random curve $\eta$ from $\R^+$ to $\overline{\HH}$ }with $\eta(0)=0$ and $\lim_{t\to \infty} \eta(t) = \infty$ which is almost surely simple and does not hit the real line except at time $0$. It can be described through the chordal (half-planar) Loewner evolution. That is, we start with $(W_t)_{t\ge 0}=(\sqrt{\kappa}B_t)_{t\ge 0}$ where $B_t$ is a standard linear Brownian motion, and define the associated Loewner flow or collection of Loewner maps $(g_t)_{t\ge 0}$ by solving the ordinary differential equations
\begin{equation}\label{E:LE}
\partial_t g_t(z) = \frac{2}{g_t(z)-\rev{W}_t}; \quad g_0(z)=z; \quad t\le T(z):=\sup\{t: g_t(z)\rev{\ne W_t}\}
\end{equation}
for every $z\in \HH$. Then for each $t\ge 0$, $g_t$ is a conformal isomorphism from $H_t:=\HH\setminus \{z: T(z)\le t\}$ to $\HH$, and one can prove that when $\kappa\in (0,4]$, \cite{RohdeSchramm}, we have $(\HH\setminus H_t)_{t\ge 0}=(\eta([0,t]))_{t\ge 0}$ where $\eta$ is an almost surely simple curve as described above.

This curve $\eta$ defines our $\SLE_\kappa$ for $\kappa\in (0,4]$. We call $(W_t)_{t\ge 0}$ the driving function of $\eta$, and also sometimes use the notation $f_t:=g_t-W_t$ for the centred Loewner map. For every $t$, the half-plane capacity of $\eta([0,t])$ is equal to $2t$, which means that the unique conformal map $G:\HH\setminus \eta([0,t])\to \HH$ satisfying \footnote{In the setting of Loewner evolution, we always have that this unique conformal map $G$ is equal to $g_t$, and it always holds that $g_t(z)=z+2t/z+O(|z|^{-2})$ as $|z|\to \infty$.} $G(z)= z+o(1)$ as $z\to \infty$  in fact satisfies that $G(z)=z+2t/z + O(|z|^{-2})$ as $|z|\to \infty$. This parametrisation of the curve is often referred to as the half-plane capacity parametrisation.

A curve $\eta$ between boundary points $a$ and $b$ in a domain $D$ is said to be an $\SLE_\kappa$ from $a$ to $b$ if it is the image of an $\SLE_\kappa$ in $\HH$ from $0$ to $\infty$, under a conformal map from $ \HH$ to $ D$ mapping $0$ to $a$ and $\infty$ to $b$. 

An important quantity in this article, in particular when it comes to defining conformal Minkowski content for $\eta$, is $\SLE_\kappa$ Green's function. This function captures the asymptotic probability that an $\SLE_\kappa$ curve comes near to a point $z$. 

\begin{defn}[SLE$_\kappa$ Green's function]
\label{d:GreenSLE}  
\rev{Recall that $d=1+(\kappa/8)$ is the Hausdorff dimension of SLE$_\kappa$.} We define 
\[G(z):=c_\kappa^*\left(2\Im(z)\right)^{d-2}\left(\sin\left(\arg z\right)\right)^{8/\kappa-1}\]
where $c_\kappa^*=2(\int_0^\pi \sin(x)^{8/\kappa} \, dx)^{-1}$.
\end{defn}
As shown in, e.g. \cite{LawlerWerness}, we have
\[
G(z)=\lim_{u\to 0} u^{-(2-d)}\mathbb{P}(\CR(z,\HH\setminus \eta)<u)
\]
where under $\mathbb{P}$, $\eta$ has the law of SLE$_\kappa$ from $0$ to $\infty$ in $\HH$.

\subsection{SLE-GFF coupling} For our proof, the following remarkable result which describes how the law of a free boundary Gaussian free field transforms after applying the centred Loewner map for an independent $\SLE_\kappa$, will be crucial. This was first observed by Sheffield in \cite{Sheffield}, and although the result is surprising and powerful, its proof relies only on elementary stochastic calculus. (The reader can also consult \cite[Chapter 8]{BP} for an introduction for neophytes).

\begin{thm}[Theorem 1.2 of \cite{Sheffield}]\label{T:gffinvariance}
Let $\Gamma$ be a free boundary Gaussian free field as in Definition \ref{d:freeGFF}, and for $\gamma\in (0,2)$, set 
$$ \rev{\hat \Gamma}^\gamma=\Gamma+\frac{2}{\gamma}\log|(\cdot)|.
$$
Let $\kappa=\gamma^2$ and let $\eta$ be an independent $\SLE_\kappa$ curve from $0$ to $\infty$ in $\HH$. 

Then 
$$ \rev{\hat\Gamma}^\gamma\circ f_T^{-1} + \left(\frac{2}{\gamma}+\frac{\gamma}{2}\right)\log \left|\left(f_T^{-1}\right)'\left(\cdot\right)\right|+C\overset{(d)}{=} \rev{\hat \Gamma}^\gamma$$
where $C$ is a random constant, chosen such that the upper unit semicircle average of the random generalised function $\rev{\hat \Gamma}^\gamma\circ f_T^{-1} + (\frac{2}{\gamma}+\frac{\gamma}{2})\log |(f_T^{-1})'(\cdot)|+C-\frac{2}{\gamma}\log|(\cdot)|$ is equal to $0$.
\end{thm}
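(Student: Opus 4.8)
The statement concerns the law of a random generalised function, so I would first reduce it to the claim that the law of
$h_t := \Gamma^\gamma\circ f_t^{-1} + (\tfrac2\gamma+\tfrac\gamma2)\log|(f_t^{-1})'|$ \emph{modulo additive constants} does not depend on $t\ge 0$; the random constant $C$ is then recovered afterwards by imposing the stated normalisation (the upper unit semicircle average of $h_t-\tfrac2\gamma\log|\cdot|$ should vanish, exactly as it does for $\Gamma=\Gamma^\gamma-\tfrac2\gamma\log|\cdot|$ by the remark in Section \ref{ss:gff}). Since $h_0=\Gamma^\gamma$ is, up to the deterministic shift $\tfrac2\gamma\log|\cdot|$, Gaussian, its law modulo constants is determined by its characteristic functional on mean-zero test functions, so by a standard characterisation of laws of random distributions it suffices to show that $t\mapsto\E{e^{\im(h_t,\rho)}}$ is constant for every mean-zero $\rho\in C_c^\infty(\HH)$. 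Writing $Q=\tfrac2\gamma+\tfrac\gamma2$ and performing the change of variables $z=f_t(w)$, one has $(h_t,\rho) = (\Gamma,\phi_t) + m_t$ where $\phi_t := (\rho\circ f_t)|f_t'|^2\1_{H_t}$ and $m_t := (\tfrac2\gamma\log|\cdot| - Q\log|f_t'|,\phi_t)$ are measurable functions of $\eta|_{[0,t]}$ --- here one uses that for $\kappa\in(0,4)$ the set $\eta[0,t]$ is a simple curve of zero Lebesgue measure, so $f_t^{-1}(\mathrm{supp}\,\rho)$ is a compact subset of $H_t$ and $\phi_t$ is a genuine test function. Using the independence of $\Gamma$ and $\eta$ and conditioning on $\eta$, this gives $\E{e^{\im(h_t,\rho)}} = \E{\exp(\im m_t - \tfrac12 v_t)}$, with $v_t := \iint\phi_t\,\Gf\,\phi_t = \iint\rho(u)\rho(v)\,\Gf(f_t^{-1}(u),f_t^{-1}(v))\,du\,dv$ the conditional variance.

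Next I would run the Loewner flow of $\eta$, driven by $W_t=\sqrt\kappa B_t$, so that for fixed $w\in H_t$ one has $df_t(w)=\tfrac2{f_t(w)}\,dt-dW_t$ and $df_t'(w)=-\tfrac{2f_t'(w)}{f_t(w)^2}\,dt$. Differentiating these (and the relation $f_t(f_t^{-1}(w))=w$) yields the evolution of $f_t^{-1}$, of $\log|f_t^{-1}|$, of $\log|(f_t^{-1})'|$, and hence of $m_t$ and $v_t$, so that $t\mapsto(m_t,v_t)$ is a continuous semimartingale in the filtration of $\eta$. By It\^o's formula $t\mapsto\exp(\im m_t-\tfrac12 v_t)$ is then a local martingale provided the two identities $\mathrm{drift}(m_t) = \tfrac12\,\tfrac{d}{dt}\langle m,v\rangle_t$ and $\mathrm{drift}(v_t) = -\tfrac{d}{dt}\langle m\rangle_t + \tfrac14\,\tfrac{d}{dt}\langle v\rangle_t$ hold. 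After a further change of variables and integration by parts, both reduce to pointwise identities for the holomorphic functions $\lambda_t := \tfrac2\gamma\log f_t^{-1} + Q\log(f_t^{-1})'$ (so $m_t=(\mathrm{Re}\,\lambda_t,\rho)$) and $(z,w)\mapsto\Gf(f_t^{-1}(z),f_t^{-1}(w))$; for instance the first one takes the form $\mathrm{Re}\big[\tfrac\kappa2\lambda_t''(z)-\tfrac2z\lambda_t'(z)+\tfrac{2Q}{z^2}\big]=(\text{covariation correction})$, and the algebra closes precisely because $\kappa=\gamma^2$ and $Q=\tfrac2\gamma+\tfrac\gamma2$ --- already the case $t=0$ (where $f_0=\mathrm{id}$, $\lambda_0=\tfrac2\gamma\log z$) forces the coefficient $2Q-\tfrac4\gamma-\tfrac\kappa2\cdot\tfrac2\gamma=\gamma-\gamma=0$.

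Granting these identities together with an integrability argument --- bounding $m_t$, $v_t$ and their martingale parts uniformly on compact time intervals, which is where simplicity of $\eta$ and the fact that $f_t^{-1}(\mathrm{supp}\,\rho)$ stays away from $\partial H_t$ almost surely are used --- the local martingale is a true martingale, so $\E{e^{\im(h_t,\rho)}}=\E{e^{\im(h_0,\rho)}}$ for all $t$ and all mean-zero $\rho$; this upgrades to $h_t\overset{(d)}{=}\Gamma^\gamma$ modulo additive constants. Finally, since the upper unit semicircle average is an almost surely well-defined linear functional of the fields in question (Section \ref{ss:gff}), subtracting $\tfrac2\gamma\log|\cdot|$ and setting this average equal to $0$ determines $C$ uniquely and measurably, promoting the statement to the claimed equality in law with the prescribed $C$.

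The heart of the argument --- and the step I expect to be hardest --- is the verification of the two It\^o identities above: although each is an ``elementary'' stochastic-calculus computation, correctly tracking the drift and quadratic (co)variation of $m_t$ and $v_t$, each of which is an integral against $\rho\otimes\rho$ of a function of $f_t^{-1}$, is delicate, and the cancellations are as tight as the relation $\kappa=\gamma^2$ permits, with no slack. A secondary technical difficulty is making the distributional manipulations rigorous --- controlling $f_t^{-1}$ near the origin, where $\Gamma^\gamma$ has a logarithmic singularity and $(f_t^{-1})'$ degenerates, and near the tip of $\eta$; and the integrability needed to pass from a local to a true martingale, which is one more place where $\kappa\in(0,4)$ (so that $\eta$ is simple) enters.
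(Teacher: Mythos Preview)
The paper does not actually prove this theorem: it is quoted as Theorem~1.2 of \cite{Sheffield}, and the paper only remarks that ``its proof relies only on elementary stochastic calculus'' and points to \cite[Chapter~8]{BP}. Your overall strategy --- reduce to stationarity in law modulo constants, test against mean-zero $\rho$, condition on the curve to get $\E{e^{\im(h_t,\rho)}}=\E{e^{\im m_t-\tfrac12 v_t}}$, and then run It\^o's formula on the Loewner flow to see that $e^{\im m_t-\tfrac12 v_t}$ is a (local) martingale, with the drift conditions closing precisely when $\kappa=\gamma^2$ and $Q=\tfrac2\gamma+\tfrac\gamma2$ --- is exactly the route taken in Sheffield's proof and in \cite{BP}.

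There is, however, one genuine technical point where your sketch diverges from the standard argument and where it would cause real pain. You propose to run the \emph{forward} centred Loewner flow $f_t$ and then differentiate $f_t^{-1}$ via the relation $f_t(f_t^{-1}(w))=w$. The problem is that while $f_t(z)$ for fixed $z$ satisfies the simple SDE $df_t(z)=\tfrac{2}{f_t(z)}\,dt-dW_t$, the process $t\mapsto f_t^{-1}(w)$ for fixed $w$ does \emph{not} satisfy a clean autonomous SDE: working it out via It\^o's formula on $g_t^{-1}(w+W_t)$ drags in $g_t'$ and $g_t''$ evaluated at the moving point, and the bookkeeping for $m_t$ and $v_t$ becomes unpleasant. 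The standard fix --- and the one Sheffield uses --- is to pass to the \emph{reverse} Loewner flow $\hat f_s$, which satisfies $d\hat f_s(z)=-\tfrac{2}{\hat f_s(z)}\,ds-d\hat W_s$ for fixed $z$ and has $\hat f_T\overset{(d)}{=}f_T^{-1}$ for each fixed $T$. Then the conditional variance becomes $\iint\rho(u)\rho(v)\,\Gf(\hat f_s(u),\hat f_s(v))\,du\,dv$, each argument evolves by a simple SDE, and the It\^o identities you wrote down become a direct (if still careful) computation. Since only the terminal-time law matters, this substitution costs nothing and is what makes the ``elementary stochastic calculus'' genuinely elementary.
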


This will be important for our proof, because we will need to use GMC results (more precisely Proposition \ref{p.convergence_liouville}) for the field $\Gamma^T$ defined in \eqref{e:GT} and 
appearing in Theorem \ref{T:main}. Theorem \ref{T:gffinvariance} above implies that this field is absolutely continuous with respect to a free boundary GFF \rev{(outside any neighbourhood of $0$)}, which allows us to apply the aforementioned GMC results.

\section{Conformal Minkowski content on $\eta$}\label{s.Minkowski_content}

Let $\kappa=\gamma^2\le 4$.
and let $\eta$ be an SLE$_\kappa$ in $\HH$ from $0$ to $\infty$. Recall the definitions of $g_t$, $f_t$, $W_t, \CR(\cdot, \cdot)$ from Section \ref{SS:SLE}.

\subsection{Existence and properties of one-sided conformal Minkowski content: proof of Theorem \ref{T:onesidedmink}}\label{S:onesidedmink}
 Let $\HH^L$ (resp. $\HH^R$) be \rev{the connected component of $\HH\backslash \eta$ to left (resp. right) side of $\eta$}.
Write $A^L(z)$ for the event that $z$ lies on the left-hand side of $\eta([0,\infty))$ and $A^R(z)$ for the event that it lies on the right hand side.  Let $d=d(\kappa)\in (1,3/2]$ be as in \eqref{eq:hatm}. 

\begin{thm} \label{T:confminkone}
For $i=L,R$ and $u>0$ let \[m_\eta^{q,u}(dz):=u^{-(2-d)}\I{\CR(z,\HH\setminus \eta)<u}\I{A^q(z)} \, dz.\]
There exists a limiting measure 
$m^q_\eta$, which we call the one-sided conformal Minkowski content of $\eta$, such that 
\[
m_\eta^{q,u} \to m_\eta^q \text{ a.s.}
\]
(in the sense of vague convergence of measures on $\overline{\HH}$) as $u\to 0$. 
Moreover, for $q=L,R$, $t>0$ and
bounded $D\subset \HH$ with $d(D,\R)>0$, it holds almost surely that:
\begin{align}
& m_\eta^q(\eta([0,t])) \text{ is } \rev{\sigma( \eta(s); s\le t)} \text{-measurable}, \text{\& defines an acontinuous, increasing process in } t; \label{eq:minkchar1}\\ 
& \mathbb{E}\left(m_\eta^q(D)\, | \, \rev{\sigma(\eta(s); s\le t)}\right)=m_\eta^q\left(D\cap\eta([0,t])\right)+\frac{1}{2} \int_{D\setminus \eta([0,t])} |g_t'(z)|^{2-d} G(g_t(z)-W_t) \, dz   \label{eq:minkchar2}
\end{align}
\end{thm}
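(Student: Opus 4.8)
The plan is to follow the scheme of \cite{LawlerRezaei} for the Minkowski content measure $\hat m_\eta$, adapting it to the conformal radius normalisation and, crucially, to the one-sided indicator $\1_{A^i(z)}$. The backbone is a first and second moment computation for $m_\eta^{i,u}(D)$ combined with a martingale argument. First I would compute the first moment: by Definition \ref{d:GreenSLE} and the cited result of \cite{LawlerWerness}, $u^{-(2-d)}\P(\CR(z,\HH\setminus\eta)<u) \to G(z)$, so the key new input is that, conditionally on $\eta$ coming within conformal radius $u$ of $z$, the events $A^L(z)$ and $A^R(z)$ each have conditional probability tending to $1/2$ as $u\to 0$. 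This is exactly the symmetry point flagged in Section \ref{ss:proofidea}: writing $\theta_t = \arg f_t(z)$, one checks via \eqref{E:LE} and It\^o's formula that $\theta_t$ (run in the clock where $\CR(z,\HH\setminus\eta(0,t])$ decreases at unit exponential rate) is a diffusion on $(0,\pi)$ whose law converges exponentially fast to its unique stationary density, proportional to $\sin(\theta)^{8/\kappa-1}$ on $(0,\pi)$, which is symmetric about $\pi/2$; and the side of $z$ relative to $\eta$ is determined by whether $\theta$ exits near $0$ or near $\pi$. Hence $\E{m_\eta^{i,u}(D)} \to \tfrac12\int_D G(z)\,dz$ for $i=L,R$.

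Next I would run the same programme for the second moment. Using the Markov property of SLE at the (capacity) time the curve first gets conformal radius $\sim u$ from $z_1$, one reduces $\E{m_\eta^{i,u}(D)^2}$ to an integral of a two-point function, and the standard estimates of \cite{LawlerRezaei,LawlerWerness} (uniform up-to-constants bounds on the two-point Green's function, and the fact that the extra one-sided indicators only cost bounded multiplicative factors) give $\limsup_u \E{m_\eta^{i,u}(D)^2} \le C(\int_D G)^2$, which is enough to get tightness and $L^1$ (indeed $L^2$) convergence along subsequences. To upgrade to an almost sure limit I would introduce, for fixed bounded $D$ with $d(D,\R)>0$, the martingale
\begin{align*}
M_t^{i,u}(D) := \E{m_\eta^{i,u}(D) \,\big|\, \eta([0,t])} = m_\eta^{i,u}(D\cap\eta([0,t])) + \int_{D\setminus\eta([0,t])} |g_t'(z)|^{2-d}\, u^{-(2-d)}\P^{z,t}(\cdots)\, dz,
\end{align*}
where the conditional probability is evaluated after applying $g_t-W_t$ (using conformal covariance of $\CR$ and of the events $A^i$); as $u\to 0$ the integrand converges to $\tfrac12 |g_t'(z)|^{2-d} G(g_t(z)-W_t)$ by the first-moment analysis above, giving the continuous martingale $M_t^i(D)$ whose terminal value is $m_\eta^i(D)$ and whose form yields \eqref{eq:minkchar2}. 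A Borel--Cantelli argument along $u=2^{-k}$, using the second-moment bound applied to $m_\eta^{i,u}(D) - m_\eta^{i,u'}(D)$ for $u,u'$ close, promotes convergence in probability to almost sure convergence; taking a countable generating family of sets $D$ and using monotonicity in $u$ of the total mass on such sets (as in the GMC discussion after Definition \ref{d:gmc}) gives a.s.\ vague convergence to a Radon measure $m_\eta^i$ on $\overline\HH$. Property \eqref{eq:minkchar1} follows because $m_\eta^i(\eta([0,t]))$ is the terminal value of a continuous martingale stopped suitably, and monotonicity in $t$ is inherited from the approximations.

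Finally, the equality $m_\eta^L = m_\eta^R$ a.s.: since both have the same first moment $\tfrac12\int_D G$, it suffices to show $\E{(m_\eta^L(D)-m_\eta^R(D))^2}=0$, equivalently that the two-point functions match, which again reduces — via the Markov property — to the one-point statement that conditionally on the curve approaching a point, left and right are asymptotically equally likely \emph{and} asymptotically the \emph{same} event cannot be distinguished at the relevant scale; more robustly, $m_\eta^L(D)+m_\eta^R(D)$ is (a deterministic multiple of) $\hat m_\eta(D)$ by \eqref{eq:hatm}, while by the stationarity/symmetry of $\theta_t$ the conditional law of $(m_\eta^L, m_\eta^R)$ given the sum is exchangeable, forcing them equal. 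The constant $K_\kappa$ in \eqref{eq:kkappa} comes out of comparing the normalisations: $\CR(z,\HH\setminus\eta)$ versus $d(z,\eta)$ differ by the Koebe-type factor whose $u\to 0$ average against the Green's function produces $K_\kappa$, exactly as in the passage between content definitions in \cite{LawlerRezaei}. The main obstacle I expect is the exponential convergence to the stationary angular measure with enough uniformity in $z$ and in the starting configuration to make the one-sided first-moment limit (and its two-point analogue) hold \emph{uniformly} on $D$, which is what the whole martingale-to-a.s.\ upgrade rests on; the second-moment estimates themselves are essentially bookkeeping on top of \cite{LawlerRezaei,LawlerWerness}.
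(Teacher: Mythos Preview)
Your overall architecture for existence---sharp one-sided one-point estimate via the radial angle diffusion and its symmetric stationary law, then a second-moment bound on increments $m_\eta^{i,u}(D)-m_\eta^{i,u'}(D)$ \`a la \cite{LawlerRezaei}, then Borel--Cantelli along a geometric sequence---is exactly what the paper does (see Lemma~\ref{L:onepointmain}, Lemma~\ref{L:secondmomentred}, Corollary~\ref{C:secondmoment}, and the proof of Proposition~\ref{P:confminkone}). Likewise your derivation of \eqref{eq:minkchar2} by passing to the limit in the conditional first moment matches Lemma~\ref{lem:minkchar2}. So the core of the plan is correct and aligned with the paper.

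Two points, however, need correction. First, your justification of \eqref{eq:minkchar1} is too loose: saying that $m_\eta^i(\eta([0,t]))$ is ``the terminal value of a continuous martingale stopped suitably'' does not by itself give $\eta([0,t])$-measurability or sample-path continuity. The paper handles measurability (Lemma~\ref{lem:measurability}) by a direct comparison between $\CR(z,\HH\setminus\eta)$ and $\CR(z,\HH\setminus\eta([0,t+\delta]))$ on shrinking dyadic covers of $\eta([0,t])$, and continuity (Lemma~\ref{lem:cts}) by bounding $m_\eta^i$ above by the upper Minkowski content and invoking the H\"older-type estimate \cite[Proposition~3.8]{LawlerRezaei}. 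You should plan to do the same; the martingale heuristic is not a substitute.

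Second, the equality $m_\eta^L=m_\eta^R$ (which is Corollary~\ref{C:confminkone}, not part of Theorem~\ref{T:confminkone} proper) is where your argument breaks. The claim that ``the conditional law of $(m_\eta^L,m_\eta^R)$ given the sum is exchangeable, forcing them equal'' is false as stated: exchangeability of a pair does not imply almost-sure equality (take $(X,Y)$ uniform on $\{(0,1),(1,0)\}$). Your alternative route via $\mathbb{E}[(m_\eta^L(D)-m_\eta^R(D))^2]=0$ would require the mixed two-point function to match the same-side one, which is more than the one-point symmetry and is not what you have proved. The paper's argument is both simpler and robust: once \eqref{eq:minkchar1}--\eqref{eq:minkchar2} are established for each of $m_\eta^L$ and $m_\eta^R$, the uniqueness theorem of \cite[Section~3]{LawlerSheffield} for processes satisfying that characterisation forces $m_\eta^L=m_\eta^R$ (and identifies them with $K_\kappa\hat m_\eta$).
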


\begin{cor}\label{C:confminkone}
With probability one, 
\[ m_\eta^L=m_\eta^R=:m_\eta\]
as measures on $\overline{\HH}$. Moreover, $m_\eta=K_\kappa\hat{m}_\eta$ where $\hat{m}_\eta$ is as defined in \eqref{eq:hatm} and 
\begin{equation}\label{eq:kkappa}
    2 K_\kappa:=\lim_{u\to 0} u^{-(2-d)}\mathbb{P}(d(\eta,0)<u)
\end{equation} where under $\mathbb{P}$, $\eta$ has the law of SLE$_\kappa$ in $\D$ from $1$ to $e^{2i\theta}$ for $\theta\in [0,\pi)$ random with density $\frac{1}{2}\sin(\theta) \, d\theta$. This limit exists (see for example \cite[Proposition 4.1]{LawlerRezaei}) but to our knowledge there is no explicit numerical value.
\end{cor}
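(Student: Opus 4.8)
The plan is to deduce Corollary \ref{C:confminkone} from Theorem \ref{T:confminkone} together with the characterisation of the (two-sided) Minkowski content measure $\hat m_\eta$ from \cite{LawlerRezaei}. The key observation is that the martingale characterisation \eqref{eq:minkchar1}--\eqref{eq:minkchar2} in Theorem \ref{T:confminkone} is essentially the \emph{same} characterisation (up to the constant factor) that pins down $\hat m_\eta$ in \cite{LawlerRezaei}. Indeed, by \cite{LawlerRezaei} the content measure $\hat m_\eta$ is the unique Radon measure on $\overline{\HH}$ for which $\hat m_\eta(\eta([0,t]))$ is an $\eta([0,t])$-measurable continuous increasing process and such that for bounded $D\subset \HH$ with $d(D,\R)>0$,
\begin{align*}
\mathbb{E}(\hat m_\eta(D)\mid \eta([0,t]))=\hat m_\eta(D\cap \eta([0,t]))+\int_{D\setminus \eta([0,t])}|g_t'(z)|^{2-d} G(g_t(z)-W_t)\, dz.
\end{align*}
So the first step is to invoke (or briefly recall) this uniqueness statement: any Radon measure on $\overline{\HH}$ whose restriction to $\eta$ satisfies the corresponding martingale/Markov relation with the SLE Green's function is determined, and scaling the relation by a factor $c$ scales the measure by $c$.

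The second step is then immediate: comparing the display above with \eqref{eq:minkchar2}, we see that $2 m_\eta^i$ (for either $i=L$ or $i=R$) satisfies exactly the defining relation of $\hat m_\eta$, since the integral term in \eqref{eq:minkchar2} carries a factor $\tfrac12$. By uniqueness, $2 m_\eta^i = c\,\hat m_\eta$ for a deterministic constant $c$; but then the constant is forced to be the same for $i=L$ and $i=R$, so in particular $m_\eta^L = m_\eta^R =: m_\eta$ almost surely, and $m_\eta = K_\kappa \hat m_\eta$ for some deterministic $K_\kappa$. To pin down $K_\kappa$ via \eqref{eq:kkappa}, the third step is a local computation at the tip: apply \eqref{eq:minkchar2} (summed over $i=L,R$, so with the Green's function on the right and no prefactor) and let $t\to 0$, or equivalently use the definition of $m_\eta$ as the $u\to 0$ limit of $m_\eta^{L,u}+m_\eta^{R,u}$ evaluated on a small neighbourhood of $0$. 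Applying the conformal map from $\HH$ to $\D$ sending $0$ to $1$ and $\infty$ to $e^{2i\theta}$, using that $\CR$ and the normalised content transform covariantly, and identifying the angle $\theta$ of the image of the tip as Beta$(\tfrac12,\cdot)$-distributed with density $\tfrac12\sin\theta\, d\theta$ (this is the stationary law of the driving angle, which is where the symmetry/stationarity input from the ``Idea of proof'' section enters), one reads off that $u^{-(2-d)}\hat m_\eta(B(0,u))$ has expectation converging to $\mathbf{P}(d(\eta,0)<u)\,u^{-(2-d)}$-type quantity, giving the stated formula $2K_\kappa = \lim_{u\to 0} u^{-(2-d)}\mathbf{P}(d(\eta,0)<u)$ in the disc with the prescribed endpoint law. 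The existence of this limit is quoted from \cite[Proposition 4.1]{LawlerRezaei}.

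The main obstacle I anticipate is not the uniqueness argument itself but making the identification of $K_\kappa$ in \eqref{eq:kkappa} rigorous: one must carefully justify exchanging the $u\to 0$ limit with expectations, control the contribution near the boundary $\R$ (where the measures $m_\eta^{i,u}$ a priori have no uniform integrability), and correctly track how the $u^{-(2-d)}$ normalisation and the conformal radius interact with the Möbius map $\HH\to\D$ — in particular that $\CR(z,\HH\setminus\eta)$ near the tip is comparable, up to the derivative of this fixed Möbius map which is bounded above and below near $0$, to the analogous quantity in $\D$. The probabilistic input that the tip-angle has the symmetric stationary density $\tfrac12\sin\theta\, d\theta$ — needed both for the $L=R$ symmetry and to match the normalisation in \eqref{eq:kkappa} — should already be available from the proof of Theorem \ref{T:confminkone} (it is exactly the exponentially-fast convergence to a symmetric stationary law alluded to in Section \ref{ss:proofidea}), so I would simply cite that part of the argument rather than redo it.
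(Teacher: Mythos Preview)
Your first two steps are essentially the paper's proof: cite the uniqueness of the ``natural parametrisation'' characterisation from \cite{LawlerSheffield} to conclude $m_\eta^L=m_\eta^R$, and compare with the analogous characterisation of $\hat m_\eta$ from \cite{LawlerRezaei} to get proportionality. That is exactly right.

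Where you go wrong is the prefactor, and consequently your third step. You state the characterisation of $\hat m_\eta$ with coefficient $1$ in front of the Green's function integral; in fact \cite[Theorems~1.1 and~4.2]{LawlerRezaei} show that $\hat m_\eta$ satisfies \eqref{eq:minkchar1}--\eqref{eq:minkchar2} with the factor $\tfrac12$ replaced by $2K_\kappa$, where $K_\kappa$ is \emph{already defined} there by \eqref{eq:kkappa}. Once you cite this, the identification $m_\eta^i=K_\kappa\hat m_\eta$ is immediate by uniqueness (since $K_\kappa\hat m_\eta$ then satisfies \eqref{eq:minkchar2} with the same factor $\tfrac12$). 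No further computation is needed to pin down $K_\kappa$: it is part of the cited result, not something you derive here.

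Your proposed ``local computation at the tip'' is therefore unnecessary, and as sketched it is also not correct. You conflate two different angle distributions: the density $\tfrac12\sin\theta\,d\theta$ appearing in \eqref{eq:kkappa} is \emph{not} the stationary law of the radial driving angle referred to in Section~\ref{ss:proofidea} (that stationary law, under the tilted measure $\mathbb P^*$, has density proportional to $\sin^{8/\kappa}\theta$). The $\tfrac12\sin\theta$ density arises in \cite{LawlerRezaei} from a different consideration, and the symmetry input you allude to enters the proof of Theorem~\ref{T:confminkone} (specifically Lemma~\ref{L:onepointmain}), not the deduction of the Corollary. Drop step three and simply cite the correct prefactor from \cite{LawlerRezaei}; then your argument coincides with the paper's.
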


\begin{proof}[Proof of Theorem \ref{T:onesidedmink}]
This follows from Theorem \ref{T:confminkone} and Corollary \ref{C:confminkone}.
\end{proof}

\begin{proof}[\rev{Proof of Corollary \ref{C:confminkone} given Theorem \ref{T:confminkone}}]
It is shown in \cite[Section 3: \rev{immediately after (3.2)}]{LawlerSheffield} that \eqref{eq:minkchar1},\eqref{eq:minkchar2} uniquely characterise $m_\eta^q$, which gives that $m_\eta^L=m_\eta^R=:m_\eta$ almost surely. The measure $\hat{m}_\eta$ \rev{defined in \eqref{eq:hatm} and} constructed in \cite{LawlerRezaei} also satisfies \eqref{eq:minkchar1},\eqref{eq:minkchar2}, but with the factor $1/2$ on the right-hand side of \eqref{eq:minkchar2} replaced by $2K_\kappa$ (and $m_\eta^q$ replaced by $\hat{m}_\eta$ everywhere); see \cite[Theorems 1.1 and 4.2]{LawlerRezaei}. This implies that $K_\kappa \hat{m}_\eta=(1/2)2K_\kappa \hat{m}_\eta$ satisfies \eqref{eq:minkchar1},\eqref{eq:minkchar2} and so must be equal to $m_\eta$ as desired.
\end{proof}

The crucial ingredient for the existence of one-sided conformal Minkowski content is the following sharp one-point estimate. For $z\in \HH$, let $T_r(z)$ be the first time that the conformal radius of $\HH\setminus \eta$, seen from $z$, reaches $e^{-r}$.

\begin{lemma}\label{L:onepointmain} For any $\kappa
\in (0,4]$ there exist $C,\alpha,r_0\in (0,\infty)$ such that
\begin{equation}\label{E:greensmain}
\left|1-\frac{2e^{r(2-d)}}{G(z)}\mathbb{P}\left(T_{r}(z)<\infty,A^q(z)\right)\right|\le C\left(\frac{e^{-r}}{2\Im(z)}\right)^\alpha
\end{equation}
for all $z\in \HH$, $q=L,R$ and $r\ge r_0-\log(2\Im(z))$.
\end{lemma}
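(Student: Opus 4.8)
The strategy is to analyse the evolution of the ``shape'' of $\HH\setminus\eta$ as seen from the fixed point $z$, via the Loewner flow, and to identify a one-dimensional diffusion governing the argument $\arg f_t(z)$, run at a time-change determined by $\CR(z,\HH\setminus\eta)$. Concretely, fix $z\in\HH$, write $Z_t = f_t(z) = g_t(z)-W_t$, and set $\Theta_t=\arg Z_t\in(0,\pi)$ and $O_t = \log|Z_t|$. Recall the standard computation (as in \cite{LawlerWerness,LawC}) that if we reparametrise by $s = -\log\CR(z,\HH\setminus\eta)$ (which is a legitimate increasing time-change since $t\mapsto \CR(z,\HH\setminus\eta)$ decreases at a rate comparable to $1/|Z_t|^2$, up to a factor involving $\sin\Theta_t$), then $\Theta_s$ satisfies an autonomous SDE of the form $d\Theta_s = (\tfrac{\kappa}{2}-4)\cot\Theta_s\, ds + \sqrt{\kappa}\, dB_s$ started from $\arg z$, and $T_r(z)<\infty$ is exactly the event that this diffusion survives up to time $r+\log(2\Im z)$ (roughly; one must be careful with the initial value of $s$, which is $-\log\CR(z,\HH)=-\log(2\Im z\sin\arg z)$). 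The Green's function $G(z)$ is, up to the known constant $c_\kappa^*$, the quasi-invariant density $\sin(\cdot)^{8/\kappa}$ of this diffusion against its killing, and $2\Im(z)^{d-2}$ is the Girsanov/time-change normalisation; this is precisely the content of the cited asymptotic $G(z)=\lim_{u\to0}u^{-(2-d)}\P(\CR(z,\HH\setminus\eta)<u)$.

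\textbf{Key steps.} First I would recall/derive the SDE for $(\Theta_s,O_s)$ under the conformal-radius time parametrisation, and express $\P(T_r(z)<\infty, A^i(z))$ as an expectation over the killed diffusion $\Theta$: writing $h$ for the killing semigroup, $\P(T_r(z)<\infty, A^i(z)) = \mathbb{E}_{\arg z}^{\text{killed}}\big[\1_{\text{alive at time } r+\log(2\Im z)}\, \1_{A^i}\big]$. Second, I would perform an $h$-transform (Doob transform) of $\Theta$ by the ground-state eigenfunction $\varphi(\theta)=\sin(\theta)^{8/\kappa}$, which is the principal eigenfunction with eigenvalue $\lambda = (2-d)$ (this is where the exponential $e^{-r(2-d)}$ and the factor $G(z)/2$ both come from, together with the initial-condition factor $\varphi(\arg z)\,(2\Im z\sin\arg z)^{d-2}$ matching $G(z)(2\Im z)^{d-2}$ up to the conformal-radius correction). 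Third — and this is the genuinely new ingredient highlighted in the paper's proof idea — I would control the transition from the killed diffusion to its quasi-stationary distribution: the $h$-transformed diffusion $\tilde\Theta$ on $(0,\pi)$ is positive recurrent with a spectral gap, so it converges \emph{exponentially fast} (rate $\alpha>0$ equal to the spectral gap) to its invariant density $\propto \sin(\theta)^{8/\kappa+1}$, which is \emph{symmetric} about $\pi/2$. Under the transform, the event $A^i(z)$ becomes (asymptotically) ``$\tilde\Theta$ is in the left/right half'' at a late time, and by symmetry each has invariant probability $1/2$; the exponential convergence gives the error term $C(e^{-r}/2\Im z)^\alpha$ after accounting for the initial value being at distance $\asymp e^{-r}/2\Im z$ of the regime where the stationary approximation kicks in. Fourth, I would reconcile the conformal radius in $\HH$ versus in $\HH\setminus\eta$ and the $\sin$-factor discrepancies, absorbing them into the error since $|\CR(z,\HH\setminus\eta) - \CR(z,\HH)\cdot(\text{bounded})|$ contributes only lower-order corrections controlled by the same exponential mixing.

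\textbf{Main obstacle.} The crux is quantifying the exponential convergence to the quasi-stationary/invariant law \emph{uniformly in the starting point} $\arg z\in(0,\pi)$, including when $z$ is very close to $\R$ (so $\arg z$ near $0$ or $\pi$, i.e.\ the diffusion starts near a killing boundary). One must show the spectral gap survives after the $h$-transform and that the Doob-transformed process, despite starting near the boundary, relaxes at a rate independent of the starting angle — this requires a Harnack-type or coupling argument for the transformed diffusion, or careful eigenfunction estimates for the Sturm–Liouville operator $\tfrac{\kappa}{2}\partial_\theta^2 + (\tfrac{\kappa}{2}-4)\cot\theta\,\partial_\theta$ with the relevant boundary behaviour. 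Identifying the sharp constant $\alpha$ is not needed — any $\alpha>0$ suffices — but establishing \emph{some} exponential rate uniformly, together with the symmetry that forces the limiting left/right probabilities to be exactly $1/2$, is the heart of the matter and the reason the one-sided content exists and the left/right versions agree. The remaining bookkeeping (the time-change being well-defined, the $r_0$ shift to ensure $s$ starts in range, matching constants with Definition \ref{d:GreenSLE}) is routine.
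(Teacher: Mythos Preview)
Your overall plan---reparametrise by $-\log\CR$, tilt by the ground-state eigenfunction $\sin^{8/\kappa-1}$ (equivalently change to the two-sided radial measure), and exploit exponential convergence of the angle process to a \emph{symmetric} stationary law---is exactly the paper's approach. The paper phrases it after a M\"obius map to the disc and cites the known mixing estimate (e.g.\ \cite[(40)]{LawlerRezaei}) rather than rederiving a spectral gap, but structurally the arguments coincide.

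There is, however, a genuine gap in your treatment of $A^i(z)$. You write that ``under the transform, $A^i(z)$ becomes (asymptotically) `$\tilde\Theta$ is in the left/right half' at a late time''. This is not correct: $A^i(z)$ is determined by the \emph{future} of the curve after time $T_r(z)$, not by the value of the angle at that time. (Indeed, under the $h$-transformed measure the curve a.s.\ hits $z$, so $A^i(z)$ is not even defined there.) The right step is the one the paper takes: apply the Markov property under the \emph{original} law at time $r'=r+\log(2\Im z)$ to get
\[
\P(T_r(z)<\infty,\,A^i(z))=\mathbb{E}\big[\1_{T_r(z)<\infty}\,q^i(\theta_{r'})\big],
\]
where $q^i(\theta):=\P_\theta(\mathcal A^i)$ is the probability that $0$ lies on side $i$ for a fresh radial SLE with target at angle $\theta$. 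Only then do you perform the change of measure, obtaining $\int \phi_{r'}(\theta)\sin(\theta)^{1-8/\kappa} q^i(\theta)\,d\theta$. The symmetry that yields the factor $1/2$ is \emph{not} $\mu_\infty(\{\theta<\pi/2\})=\tfrac12$, but rather $q^L(\theta)=q^R(\pi-\theta)$ together with $q^L+q^R=1$ and the symmetry of $\sin(\theta)\,d\theta$, giving $\int_0^\pi \sin\theta\, q^i(\theta)\,d\theta=\tfrac12\int_0^\pi\sin\theta\,d\theta=1$ for each $i$. Without this Markov-property step your argument does not close, since you never explain what functional of $\tilde\Theta$ actually encodes $A^i(z)$.
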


\begin{proof}
Applying the conformal map $w\mapsto e^{-2i\arg(\rev{z})}\frac{w-z}{w-\bar{z}}$ from $\HH$ to $\D$ that sends $z\mapsto 0$, $0\mapsto 1$ and $\infty\mapsto e^{2i\theta_0}$, \rev{where $\theta_0=\pi-\arg(z)$}, we can rewrite 
\[ \mathbb{P}\left(T_r(z)<\infty, A^q(z)\right) = \mathbb{P}_{\theta_0}\left(\mathcal{T}>r+\log(2\Im(z)), \,  \mathcal{A}^q\right) \]
where: $\mathbb{P}_{\theta_0}$ denotes the law of an SLE$_\kappa$ $\rev{\eta}$ in $\D$ from $1$ to $e^{2i\theta_0}$, parameterised so that $\CR(0,D_t):=\CR(0,\D\setminus \rev{\eta}([0,t])\rev{)}=e^{-t}$ for all $t$; $\mathcal{T}$ is its total lifetime; and $\mathcal{A}^q$ is the event that $0$ lies on the left (for $q=L$) or right (for $q=R)$ of $\rev{\eta}([0,\mathcal{T}])$. For $t>0$ let $g_t:D_t\to \mathbb{D}$ be the unique conformal map sending $0\mapsto 0$ and \rev{$\eta(t)\mapsto 1$}. Set $g_t(e^{2i\theta_0})=:e^{2i\theta_t}$. Then applying the Markov property of SLE at time $r':=r+\log(2\Im(z))$ yields that \[
\mathbb{P}_{\theta_0}\left(\mathcal{T}>r+\log(2\Im(z)), \,  \mathcal{A}^q\right) = \mathbb{E}_{\theta_0}\left(\1_{\mathcal{T}>{r'}} \mathbb{P}_{\theta_{r'}}(\mathcal{A}^q)\right).\]
Next, we appeal to the fact that  $M_t:=\1_{\mathcal{T}>t}e^{t(2-d)}\sin(\theta_t)^{8/\kappa-1}$ is a martingale under $\mathbb{P}_{\theta_0}$. 
(This is easily verified using stochastic calculus, or see, for example, \rev{\cite[Section 2.3: third and fourth displayed equations]{LawlerWerness}}.  Writing $\mathbb{P}_{\theta_0}^*$ for the law $\mathbb{P}_{\theta_0}$ weighted by $M_t/M_0$ (this is the law of \emph{two-sided radial SLE$_\kappa$ going through the origin and stopped when reaching the origin}) we have that $\mathcal{T}>r'$ $\mathbb{P}^*_{\theta_0}$-a.s., and can thus write the right-hand side above as 
\[
e^{(d-2)r'}\sin(\theta_0)^{8/\kappa-1} \mathbb{E}_{\theta_0}^*\left(\sin(\theta_{\rev{r'}})^{1-8/\kappa}\mathbb{P}_{\theta_{r'}}(\mathcal{A}^q)\right)=e^{(d-2)r'}\sin(\theta_0)^{8/\kappa-1} \int_0^{\pi} \phi_{r'}(\theta;\theta_0) \sin(\theta)^{1-8/\kappa}\mathbb{P}_\theta(\mathcal{A}^q) d\theta\]
where $\phi_t(\theta;\theta_0)$ is the density at time $t$ of $\theta_t$ under $\mathbb{P}^*_{\theta_0}$. In conclusion (combining the three displayed equations above) we have that 
\begin{equation}\label{eq:onepointrewrite}
\left|1-\frac{2e^{r(2-d)}}{G(z)}\mathbb{P}\left(T_{r}(z)<\infty,A^q(z)\right)\right|=\left| 1- \frac{2}{\rev{c^*_\kappa}}\int_0^{\pi} \phi_{r'}(\theta,\theta_0)\sin(\theta)^{1-8/\kappa}\mathbb{P}_\theta(\mathcal{A}^q) d\theta  \right|.
\end{equation} 

Now, by e.g.\,\cite[(40)]{LawlerRezaei}, we have that $\phi_t$ converges to an equilibrium distribution exponentially fast and uniformly in $\theta_0$. More precisely, for some $C',\alpha,t_0\in (0,\infty)$ depending only on $\kappa$
\[ \left |1-\frac{\phi_t(\theta;\theta_0)}{(c_*/2) \sin(\theta)^{8/\kappa}}\right| \le C'e^{-\alpha t} \quad \forall t\ge t_0, \theta\in [0,\pi].\]
Applying this with $t=r'$ in \eqref{eq:onepointrewrite} and using that \begin{align*}
\int_0^\pi \sin(\theta)\mathbb{P}_\theta(\mathcal{A}^L)=\int_0^\pi \sin(\theta)\mathbb{P}_\theta(\mathcal{A}^R)=\frac12 \int_0^\pi \sin(\theta) d\theta = 1\end{align*} by symmetry of SLE$_\kappa$, we obtain the statement of the lemma. 
\end{proof}

The proof of Theorem \ref{T:confminkone} given Lemma \ref{L:onepointmain} is contained in the Appendix, since it essentially follows the same reasoning as \cite{LawlerRezaei}.

\subsection{More useful approximations for conformal Minkowski content and Lebesgue measure} The objective of this section is to find a new approximation for the one-sided conformal Minkowski content of an SLE curve, as discussed in Section \ref{ss:proofidea}. In order to obtain this approximation, we fix $0<\delta<\gamma/2$, $q=L$ or $q=R$, and define the approximate measure on $\HH\setminus \eta$:
\[ {\sigma^{q,\delta}_\eta}(dz):
= \delta\, \CR(z,\mathbb{H}\setminus \eta)^{\rev{d(\kappa)}-2+\delta} \1_{A^q(z)} dz=\delta\, \CR(z,\mathbb{H}\setminus \eta)^{-1+(\kappa/8)+\delta} \1_{A^q(z)} dz.\]

The main result of this subsection is the following, which closely follows Lemma 6.3 of \cite{SSV}.
\begin{propn}\label{prop::convergence_better_approximation}
    For any $\kappa\leq 4$, let $\eta$ be an SLE$_\kappa$ from $0$ to $\infty$ in $\HH$. Then we have that a.s.\,for $q\in \{L,R\}$,
    \begin{align*}
    \sigma^{q,\delta}_\eta (dz) \to \left(1-\frac{\kappa}{8}\right)\, m_{\eta} (dz), \text{ as }\delta \to 0
    \end{align*}
    with respect to the vague topology for measures on $\overline{\HH}$. Recall that $m_\eta=m_\eta^L=m_\eta^R$, as described in Corollary \ref{C:confminkone}.
\end{propn}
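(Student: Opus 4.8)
The plan is to show that $\sigma^{i,\delta}_\eta$ converges to $m_\eta$ by combining a first-moment computation with an a.s.\ argument in the spirit of \cite{SSV,ALS1}. The starting observation is the Fubini-type identity
\begin{align*}
\sigma^{i,\delta}_\eta(dz) = \delta \int_0^\infty u^{\delta-1} \, m_\eta^{i,u}(dz) \, du \cdot \frac{1}{?}
\end{align*}
wait---more precisely, since $\CR(z,\HH\setminus\eta)^{d-2+\delta} = (d-2+\delta)\int_0^{\CR(z,\HH\setminus\eta)} u^{d-3+\delta}\,du$ is awkward, I would instead use the cleaner layer-cake representation in terms of the measures $m_\eta^{i,u}$ directly: writing $R(z):=\CR(z,\HH\setminus\eta)$, one has $R(z)^{d-2+\delta}\1_{A^i(z)} = \int_0^\infty \1_{R(z)<u}\1_{A^i(z)}\,\mu_\delta(du)$ for an explicit measure $\mu_\delta$ on $(0,\infty)$ with $\mu_\delta(du) = (2-d-\delta)u^{d-3+\delta}\,du$ on the relevant range (taking care of convergence at $u=\infty$ by noting $R(z)$ is bounded on compacts). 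Hence, testing against $\varphi\in C_c(\overline\HH)$,
\begin{align*}
\int \varphi \, d\sigma^{i,\delta}_\eta = \delta(2-d-\delta)\int_0^\infty u^{\delta-1}\left(\int \varphi(z)\, u^{-(2-d)}\1_{R(z)<u}\1_{A^i(z)}\,dz\right) du = \delta(2-d-\delta)\int_0^\infty u^{\delta-1}\left(\int\varphi\,dm_\eta^{i,u}\right) du.
\end{align*}
Since $m_\eta^{i,u}\to m_\eta^i$ a.s.\ as $u\to 0$ by Theorem \ref{T:confminkone}, and $\delta(2-d-\delta)\int_0^\infty u^{\delta-1}\mathbf 1_{u\le 1}\,du = 2-d-\delta \to 2-d$... hmm, the normalisation constant is $2-d$ not $1$, so in fact I expect the precise constant $d-2+\delta$ versus $\delta$ in the definition is chosen so the limit is exactly $m_\eta$; I would track the constants carefully and confirm $\delta/(2-d)$-type factors cancel, or more likely the definition already absorbs them.

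Granting the representation above, the argument splits into two parts. \textbf{(i) Tightness / no loss of mass near $u=\infty$ and near $\R$}: I would show that for fixed compact $K\subset\overline\HH$, $\sup_{\delta<\delta_0}\E{\sigma^{i,\delta}_\eta(K)}<\infty$ using the one-point estimate: $\E{\sigma^{i,\delta}_\eta(K)} = \delta\int_K \E{R(z)^{d-2+\delta}\1_{A^i(z)}}\,dz$, and $\E{R(z)^{d-2+\delta}\1_{A^i(z)}} = \int_0^\infty \P(R(z)^{d-2+\delta}\1_{A^i(z)}>t)\,dt$, which by Lemma \ref{L:onepointmain} (controlling $\P(T_r(z)<\infty, A^i(z))$, i.e.\ $\P(R(z)<e^{-r},A^i(z))$) is $\asymp \delta^{-1} G(z)(2\Im z)^{2-d} \cdot (\text{bounded})$ up to the $\delta^{-1}$ that cancels the prefactor, yielding a bound uniform in $\delta$ and integrable over $K$. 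This also controls the contribution of the region $\{R(z)<u\}$ for $u$ large (equivalently $\Im z$ large), and shows no mass escapes to $\partial\HH$ faster than $m_\eta$ does. \textbf{(ii) Convergence of the inner integral}: by Theorem \ref{T:confminkone}, $u\mapsto \int\varphi\,dm_\eta^{i,u}$ converges a.s.\ to $\int\varphi\,dm_\eta^i$ as $u\to 0$; combined with a uniform-in-$u$ bound (again from Lemma \ref{L:onepointmain}, giving $\int\varphi\,dm_\eta^{i,u}$ bounded for $u$ in any interval $(0,u_0]$) and dominated convergence in the $u$-variable, the weighted average $\delta(2-d-\delta)\int_0^\infty u^{\delta-1}(\cdots)\,du$, which concentrates its mass near $u=0$ as $\delta\to 0$, converges a.s.\ to the normalising constant times $\int\varphi\,dm_\eta^i = \int\varphi\,dm_\eta$.

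Finally I would upgrade to simultaneous convergence for all $\varphi$ and all $i\in\{L,R\}$ a.s.\ (using separability of $C_c(\overline\HH)$ and the already-established vague convergence of $m_\eta^{i,u}$) and note that $m_\eta^L=m_\eta^R=m_\eta$ by Corollary \ref{C:confminkone}, so the limit does not depend on $i$. The main obstacle I anticipate is \textbf{part (i)}: making the uniform-in-$\delta$ moment bound genuinely uniform requires the one-point estimate \eqref{E:greensmain} to hold down to scales comparable with $2\Im(z)$, and one must handle separately the ``large $u$'' regime where $R(z)<u$ is automatic (no decay from the Green's function) versus the ``small $u$'' regime where Lemma \ref{L:onepointmain} gives the sharp asymptotics; the bookkeeping to patch these, together with checking the layer-cake representation is valid with the correct constant (so that the limit is $m_\eta$ on the nose and not a multiple of it), is where the real work lies, though it is routine given the tools already set up and the close parallel with \cite[Lemma 6.3]{SSV}.
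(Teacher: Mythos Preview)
Your proposal is correct and takes essentially the same approach as the paper: both write $\sigma^{i,\delta}_\eta$ via a layer-cake representation as a weighted average of the approximations $m_\eta^{i,u}$ over scales, then appeal to Theorem~\ref{T:confminkone} and dominated convergence. The paper's execution is slightly cleaner: after restricting (WLOG) to test functions supported where $\CR(z,\HH\setminus\eta)<1$, it obtains exactly your identity $\sigma^{i,\delta}_\eta(f)=\delta\int_{\HH^i}f+(2-d-\delta)\,\delta\int_0^1 t^{\delta-1}m_\eta^{i,t}(f)\,dt$, and then the single change of variables $t=e^{-s/\delta}$ turns the scale integral into $\int_0^\infty e^{-s}\,m_\eta^{i,e^{-s/\delta}}(f)\,ds$, where dominated convergence applies immediately because $\sup_{t\in(0,1]} m_\eta^{i,t}(f)$ is a.s.\ finite (it converges as $t\to 0$ and is trivially bounded for $t$ bounded away from $0$). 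In particular your part~(i) --- uniform-in-$\delta$ moment bounds via Lemma~\ref{L:onepointmain} --- is not needed for the a.s.\ statement; the argument is purely pathwise once Theorem~\ref{T:confminkone} is in hand. Your worry about the normalising constant is reasonable bookkeeping: the prefactor $(2-d-\delta)$ does indeed appear in the decomposition, and the paper's proof carries it but does not comment on it explicitly in the final line.
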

\rev{Before proving the proposition, let us note that the only assumption we rely on is the existence of the one-sided conformal Minkowski content. The proof itself is entirely deterministic. Moreover, this approach to Minkowski content can be extended, using the same argument, to other gauge functions in the definition of conformal Minkowski content, as in Theorem 5.1 of \cite{ALS1}, or even to variants where the conformal radius is replaced by the distance to the set itself.}

\begin{proof} We start by noting that when $\CR(z,\HH\setminus \eta)<1$, we can write
\[ \CR(z,\HH\setminus \eta)^{-1+\kappa/8+\delta} = 1- \left(-1+\kappa/8+\delta\right)\int_0^1 \1_{\CR(z,\HH\setminus \eta)<t} t^{-1+\kappa/8+\delta-1} \, dt. \]
Let $f$ be a positive, bounded, compactly supported function on $\overline{\HH}$ (which can be random); without loss of generality we assume that all $z$ in the support of $f$ have $\CR(z,\HH
{\setminus \eta})<1$, as it is clear that the limiting measure has support in $\eta$. We compute, taking $q=L$ without loss of generality,
\begin{align} \nonumber\sigma_\eta^{L,\delta}(f) & = \delta \int_{\HH^L} f(z) \, dz + \delta (1-\kappa/8-\delta) \int_{\HH^L}\int_0^1 f(z)\1_{\CR(z,\HH\setminus \eta)<\rev{u}} {\rev{u}}^{-1+\kappa/8+\delta-1} \, \rev{du dz} \\
\label{e.basic_min}& = \delta \int_{\HH^L} f(z) \, dz + \delta (1-\kappa/8-\delta) \int_0^1 m_\eta^{L,t}(f) {\rev{u}}^{-1+\delta} \, d
\rev{u},
\end{align}
where we applied Fubini and used the definition of $m_\eta^{L,\rev{u}}$ in the second line.
Making the change of variables $\rev{u}=e^{-s/\delta}$ we get 
\begin{align}\label{e.decomposition_min}
    \delta  \int_0^1 m_\eta^{L,\rev{u}}(f) \rev{u}^{-1+\delta} \, d\rev{u} =  \int_0^\infty e^{-s} m_\eta^{L,e^{-s/\delta}}(f)\, ds.
\end{align}
Taking $\delta\to 0$ and applying Theorem \ref{T:confminkone} along with dominated convergence, we have that 
\[ \lim_{\delta\to 0} \int_0^\infty e^{-s} m_\eta^{L,e^{-s/\delta}}(f)\, \rev{ds} = m_\eta(f).\]
\end{proof}

We will also use the following approximation to Lebesgue measure on the real line. For $\delta>0$ we define the measure on $\HH$ 
\begin{equation}\label{Lebdelta}
\Leb_\R^\delta(dz):= \delta (2\Im(z))^{-1+\delta} dz
\end{equation}
(where $dz$ is Lebesgue measure on $\HH$).

\begin{propn} \label{prop.convergenceLebesgue} Let $\Leb_\R$ denote Lebesgue measure on $\R$. Then
    \begin{align*}
    \Leb^\delta_\R  \to \frac{1}{2}\Leb_\R , \text{ as }\delta \to 0
    \end{align*}
    with respect to the vague topology for measures on $\overline{\HH}$.
\end{propn}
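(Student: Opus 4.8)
The plan is to show $\Leb_\R^\delta \to \tfrac12 \Leb_\R$ vaguely on $\overline{\HH}$ by testing against an arbitrary $f \in C_c(\overline{\HH})$, i.e.\,to prove $\int_{\HH} f(z)\, \delta (2\Im z)^{-1+\delta}\, dz \to \tfrac12 \int_\R f(x)\, dx$. First I would split the region of integration by the imaginary coordinate: write $z = x + iy$ and integrate first in $y$ over $(0,\infty)$ (restricted to the support of $f$, which lies in some fixed compact set, so $y$ ranges over a bounded interval $[0,M]$ and $x$ over a bounded interval). The key elementary fact is that $\delta \int_0^{M} y^{-1+\delta}\, dy = M^\delta \to 1$ as $\delta \to 0$, so the measures $\delta\, y^{-1+\delta}\, dy$ on $[0,M]$ converge weakly to the unit point mass $\delta_0$ at $y=0$; the factor $2^{-1+\delta}\to \tfrac12$ accounts for the $\tfrac12$ in the limit.

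More carefully, I would argue as follows. Fix $\varepsilon > 0$. Since $f$ is uniformly continuous and compactly supported, there is $\rho>0$ such that $|f(x+iy) - f(x)| < \varepsilon$ whenever $0 \le y \le \rho$. Decompose
\begin{align*}
\int_{\HH} f(z)\, \Leb_\R^\delta(dz) = \delta\, 2^{-1+\delta}\!\int_{\R}\!\int_0^{\rho} f(x+iy) y^{-1+\delta}\, dy\, dx \;+\; \delta\, 2^{-1+\delta}\!\int_{\R}\!\int_\rho^{\infty} f(x+iy) y^{-1+\delta}\1_{\{y \le M\}}\, dy\, dx.
\end{align*}
The second term is bounded by $\delta\, 2^{-1+\delta} \|f\|_\infty \,\Leb(\mathrm{supp} f)\cdot \sup_{y\in[\rho,M]} y^{-1+\delta}$, which is $O(\delta)$ (using $y^{-1+\delta}\le \max(\rho^{-1},M)$ for $\delta$ small) and hence tends to $0$. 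For the first term, replace $f(x+iy)$ by $f(x)$ at a cost of at most $\varepsilon \cdot \delta\, 2^{-1+\delta}\int_{\R}\int_0^\rho y^{-1+\delta}\,dy\,dx = \varepsilon\cdot 2^{-1+\delta}\rho^\delta\,\Leb(\pi_\R(\mathrm{supp} f)) \le C\varepsilon$ for $\delta$ small, where $\pi_\R$ is projection to the real axis; and the resulting main term is $2^{-1+\delta}\rho^\delta \int_\R f(x)\, dx$, which converges to $\tfrac12\int_\R f(x)\,dx$. Letting $\delta\to 0$ and then $\varepsilon\to 0$ gives the claim.

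I do not expect any serious obstacle here: this is a soft/elementary convergence statement, and the only mild care needed is to handle the test function's $y$-dependence near the boundary (handled by uniform continuity) and to confirm that vague convergence on $\overline{\HH}$ is the right topology — but since $\mathrm{supp} f$ is compact in $\overline{\HH}$, the $y$-integration range is automatically bounded and there is no mass escaping to infinity to worry about. An alternative, even quicker route would be to note that $\Leb_\R^\delta$ has a density with respect to $\Leb_\R \otimes (\delta y^{-1+\delta} dy)$ structure and invoke the standard fact that $\delta y^{-1+\delta}\,dy \rightharpoonup \delta_0$ on any $[0,M]$ directly, but spelling out the test-function computation as above is cleanest and self-contained.
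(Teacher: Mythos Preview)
Your proof is correct and follows essentially the same idea as the paper: both test against compactly supported continuous functions and exploit that the $y$-marginal $\delta y^{-1+\delta}\,dy$ concentrates at $0$. The only cosmetic difference is in execution---the paper makes the substitution $u=y^\delta$ and then applies dominated convergence (since $u^{1/\delta}\to 0$ for $u\in(0,1)$), whereas you split the $y$-integral at a small height $\rho$ and use uniform continuity; you even allude to the paper's shortcut in your final paragraph.
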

This proposition can be obtained by interpreting $\R\subseteq \C$ as an $\SLE_0$ and then using Proposition \ref{prop::convergence_better_approximation}. However, for clarity, we provide a self contained proof.

\begin{proof}
Take $f\in C_0^\infty(\bar \HH)$, without loss of generality we assume that the support of \rev{$f$} has imaginary part smaller than $1$. Let us compute
\begin{align*}
\int f \Leb^{\delta}_\R(dz) &= 2^{-1+\delta}\delta \int_\R \int_0^1 f(x+iy)  y^{\delta -1} dy dx\\ 
&= 2^{-1+\delta} \int_\R \int_0^1  f( x+ i u^{1/\delta}) du dx,
\end{align*}
where, in the last equation, we make the change of variables $u=y^{\delta}$. By bounded convergence theorem the above converges, as $\delta \to 0$ to
\begin{align*}
2^{-1} \int_\R f(x + 0i)dx,
\end{align*}
which finishes the proof.
\end{proof}
\subsection{Bounds on the energy}  

Recall the definition $(\mathcal{E}_\beta)$, \eqref{ebeta}, for a collection of measures (or a single measure). The following remark will be useful in this section.

\begin{remark}\label{r:ebeta}
If a sequence of measures $(\mathfrak{m}_n)_{n\in \N}$ satisfies ($\mathcal E_{\beta}$) and a.s. $\mathfrak{m}_n\to \mathfrak{m}$ \rev{for the vague topology}, then 
\[
\E{\int e^{\tilde \beta \Gf(z,w)} d\mathfrak{m}(dz) d\mathfrak{m}(dw)} <\infty,
\] 
where $\tilde{\beta}>\beta$ is as in the definition of $(\mathcal{E}_\beta)$. This follows from the convergence of the integral of $e^{\alpha \Gf(z,w)}\wedge M$, plus Fatou's Lemma when taking $M\to \infty$. \end{remark}

We first prove that the approximation to Lebesgue measure satisfies the energy condition for any \rev{$\beta<1/2$.}

\begin{propn}\label{prop.EbLebesgue}
    We have that $(\Leb_\R^\delta)_{0<\delta<1}$ \rev{from \eqref{Lebdelta}} satisfies the energy condition $(\mathcal E_\beta)$ for any $\beta < 1/2$.
\end{propn}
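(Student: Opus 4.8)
\textbf{Proof proposal for Proposition \ref{prop.EbLebesgue}.}

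The plan is to compute the double integral in \eqref{ebeta} directly, using the explicit form of the free boundary Green's function $\Gf(x,y)=-\log|x-y|-\log|x-\bar y|$ and the explicit density of $\Leb_\R^\delta(dz)=\delta(2\Im z)^{-1+\delta}dz$. Fix a compact $K\subseteq\overline{\HH}$; by scaling and translating we may assume $K\subseteq \{z:\Im z\le 1\}\cap B(0,R)$ for some $R$. Then for $\tilde\beta$ slightly larger than $\beta$, we must bound
\[
\delta^2\iint_{K\times K} |x-y|^{-\tilde\beta}\,|x-\bar y|^{-\tilde\beta}\,(2\Im x)^{-1+\delta}(2\Im y)^{-1+\delta}\,dx\,dy
\]
uniformly in $\delta\in(0,1)$. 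Since $|x-\bar y|\ge 2\sqrt{\Im x\,\Im y}$ on $\HH$, we have $|x-\bar y|^{-\tilde\beta}\le (2\sqrt{\Im x\Im y})^{-\tilde\beta}$, which turns the $|x-\bar y|$ factor into a product of powers of $\Im x$ and $\Im y$; combined with the density this leaves a factor $(\Im x)^{-1-\tilde\beta/2+\delta}(\Im y)^{-1-\tilde\beta/2+\delta}$ together with the $|x-y|^{-\tilde\beta}$ singularity.

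The main step is then to show, writing $x=a+is$, $y=b+it$ with $s,t\in(0,1]$ and $a,b$ in a bounded interval, that
\[
\delta^2\int\!\!\int\!\!\int\!\!\int |x-y|^{-\tilde\beta} s^{-1-\tilde\beta/2+\delta}\,t^{-1-\tilde\beta/2+\delta}\,da\,db\,ds\,dt
\]
is bounded uniformly in $\delta$. I would integrate in $a$ first (for fixed $b,s,t$): since $|x-y|^2=(a-b)^2+(s-t)^2\ge (a-b)^2$ and $\tilde\beta<1$, the integral $\int_{|a-b|\le 2R}|x-y|^{-\tilde\beta}da\le \int_{|a-b|\le 2R}|a-b|^{-\tilde\beta}da = C_R<\infty$ is bounded independently of $b,s,t$. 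This reduces the problem to controlling $\delta^2\big(\int_0^1 s^{-1-\tilde\beta/2+\delta}ds\big)\big(\int_0^1 t^{-1-\tilde\beta/2+\delta}dt\big)$ times the bounded $b$-integral. Now here is the crucial point: when $\tilde\beta/2<\delta$ the one-dimensional integral $\int_0^1 s^{-1-\tilde\beta/2+\delta}ds=(\delta-\tilde\beta/2)^{-1}$ is finite, but as $\delta\downarrow \tilde\beta/2$ this blows up like $(\delta-\tilde\beta/2)^{-1}$; and when $\delta\le \tilde\beta/2$ the integral diverges entirely. So the naive bound $|x-\bar y|^{-\tilde\beta}\le (2\sqrt{\Im x\Im y})^{-\tilde\beta}$ is \emph{too lossy}, and this is the main obstacle.

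To fix this I would \emph{not} throw away all of the $|x-\bar y|$ factor: write $|x-\bar y|^{-\tilde\beta}=|x-\bar y|^{-\tilde\beta+2\delta}\,|x-\bar y|^{-2\delta}$, bound the second factor by $(2\sqrt{\Im x\Im y})^{-2\delta}$ to cancel the density down to $(\Im x)^{-1}(\Im y)^{-1}$... no — better: bound only \emph{enough} of it. Concretely, since $\beta<1/2$ we may pick $\tilde\beta\in(\beta,1/2)$, hence $\tilde\beta/2<1/4$; choose the threshold so that we only need uniformity for $\delta$ in a fixed range, say $0<\delta<1$, but split: for $\delta\ge 1/4$ the integrals $\int_0^1 s^{-1+\delta-c}ds$ with $c=\tilde\beta/2<1/4\le\delta$ are bounded by a constant and the $\delta^2$ prefactor is harmless, so that regime is immediate. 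For $\delta<1/4$, use instead the sharper bound pairing the density's $\delta$-power with part of $|x-\bar y|$: since $\Im x\le |x-\bar y|$, we have $(\Im x)^{\delta}(\Im y)^{\delta}\le |x-\bar y|^{2\delta}$, so $|x-\bar y|^{-\tilde\beta}(\Im x)^{-1+\delta}(\Im y)^{-1+\delta}\le |x-\bar y|^{-\tilde\beta+2\delta}(\Im x)^{-1}(\Im y)^{-1}$ — still not integrable in $s,t$ near $0$. The genuinely correct move is to keep the \emph{other} half of the reflected Green's function: $|x-\bar y|\ge \Im x + \Im y \ge \Im x$, so $|x-\bar y|^{-\tilde\beta/2-1/2+\delta}\le (\Im x)^{-\tilde\beta/2-1/2+\delta}$ absorbs the bad $s$-power when $\tilde\beta/2+1/2-\delta < $ hmm. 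I will instead simply invoke the estimate in the cleanest form: write
\[
(2\Im x)^{-1+\delta}(2\Im y)^{-1+\delta}\le C\,|x-\bar y|^{-1+\delta}\,\big((\Im x)^{-1+\delta}+(\Im y)^{-1+\delta}\big)
\]
using $\Im x+\Im y\le|x-\bar y|$, reducing a symmetric four-fold integral to one where only \emph{one} variable carries a near-critical power while the other is compensated by $|x-\bar y|^{-1+\delta}$, then integrate that variable against $|x-y|^{-\tilde\beta}|x-\bar y|^{-1+\delta}$ and finally the near-critical variable, whose integral $\delta\int_0^1 t^{-1+\delta}dt=\delta\cdot\delta^{-1}\cdot(1)= $ bounded. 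The upshot, and the real content of the argument, is the identity $\delta\int_0^1 t^{-1+\delta}\,dt = 1$, exactly mirroring Proposition \ref{prop.convergenceLebesgue}: the $\delta$ prefactor is precisely tuned to make the critical $1/t$-type singularity integrable with bounded mass. I expect the write-up to amount to: (i) reduce to $\{\Im\le 1\}$; (ii) use $\Im x+\Im y\le |x-\bar y|$ to split the density; (iii) do the $a$-integral using $\tilde\beta<1$; (iv) do the $\Im$-integral of the compensated variable picking up $\delta^{-1}$, killed by one factor of $\delta$; (v) do the remaining $\Im$-integral, now of a genuinely integrable power, picking up the last factor of $\delta$ harmlessly. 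The main obstacle is organizing the splitting in step (ii) so that the reflected term $-\log|x-\bar y|$ — which is what distinguishes this from the zero-boundary case and what makes boundary GMC up to $\beta<1/2$ rather than $\beta<1$ — is used efficiently enough to close the bound uniformly in $\delta$ down to $\delta=0$.
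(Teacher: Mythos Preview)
Your final plan has a genuine gap at steps (iv)--(v). After the splitting in (ii) you carry $|x-y|^{-\tilde\beta}|x-\bar y|^{-\tilde\beta-1+\delta}\,t^{-1+\delta}$; integrating only $|x-y|^{-\tilde\beta}$ in $a$ (using ``$\tilde\beta<1$'' as you write) forces you to bound $|x-\bar y|^{-\tilde\beta-1+\delta}\le(s+t)^{-\tilde\beta-1+\delta}$. The $s$-integral of this is then $\asymp t^{\delta-\tilde\beta}/(\tilde\beta-\delta)$ for small $\delta$, \emph{not} $\delta^{-1}$ as you claim, and the remaining $t$-integral $\int_0^1 t^{-1-\tilde\beta+2\delta}\,dt$ diverges for every $\delta<\tilde\beta/2$. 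The hypothesis $\tilde\beta<1/2$ is never actually invoked in your steps (iii)--(v), which is the tell.

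The simple repair, staying within your direct approach, is to first bound $|x-\bar y|^{-\tilde\beta}\le|x-y|^{-\tilde\beta}$ (since $|x-\bar y|\ge|x-y|$ on $\HH$), so the Green's factor collapses to $|x-y|^{-2\tilde\beta}$; then the $a$-integral needs $2\tilde\beta<1$, which is exactly the hypothesis. After that only $(s+t)^{-1+\delta}\,t^{-1+\delta}$ remains, and now both the $s$- and $t$-integrals are genuinely marginal, each contributing one factor $\delta^{-1}$ matched by the $\delta^2$ prefactor. The paper takes a shorter and conceptually different route: it writes $\Leb_\R^\delta=\int_0^M c_y\,\Leb_{\R+iy}\,dy$ with $\int_0^M c_y\,dy=M^\delta$, and uses positive definiteness of $e^{\tilde\beta\Gf}$ (the inequality $2\langle\Psi_1,\Psi_2\rangle\le\langle\Psi_1,\Psi_1\rangle+\langle\Psi_2,\Psi_2\rangle$ for the associated energy form) to bound the whole energy by $M^{2\delta}\sup_y\iint e^{\tilde\beta\Gf(x_1+iy,\,x_2+iy)}\,dx_1dx_2\le M^{2\delta}\iint|x_1-x_2|^{-2\tilde\beta}\,dx_1dx_2$, finite since $2\tilde\beta<1$. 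This avoids all the bookkeeping with powers of $\Im$.
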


\begin{proof}
Take $\beta<\tilde \beta<1/2$ and $K\subseteq \HH$; without loss of generality we can assume that $K=[-R,R]\times[0,M]$. We start by noting that for $e^{\tilde \beta \Gf(\cdot, \cdot)}$ is a positive definite operator\footnote{This can be seen, as this is the infinite sum of $\beta^n \Gf ^n/n!$, where each one of them is also positive definite. This last part follows from the fact that they are a constant times the correlation function of the Wick product $:\Gamma^n:$.}. This implies that for any finite measure $\Psi$ (not necessarily positive)
\begin{align*}
\iint e^{\tilde \beta \Gf(x,y)} \Psi(dx)\Psi(dy)\geq 0,
\end{align*}
which implies that for any two positive measures $\Psi_1$, $\Psi_2$
\begin{align}\label{e.a2+b2}
2\iint e^{\tilde \beta \Gf(z,w)} \Psi_1(dz)\Psi_2(dw) \leq \iint e^{\tilde \beta \Gf(z,w)} \Psi_1(dz)\Psi_1(dw) + \iint e^{\tilde \beta \Gf(z,w)} \Psi_2(dz)\Psi_2(dw).
\end{align}

Next, note the fact that, when restricted to $K$
\begin{align*}
    \Leb^{\delta}_\R (\cdot)  = \rev{2^\delta}\int_0 ^M c_y \Leb_{\R + iy} (\cdot) dy,
\end{align*}
where $c_y=\delta y^{-1+\delta}$ \rev{has $\int_0^M c_y dy = M^{\delta}$}. We use this fact together with  \eqref{e.a2+b2} to bound
\begin{align*}
&2\iint_{K\times K} e^{\tilde \beta \Gf(z_1,z_2)} \Leb^{\delta}_{\rev\R}(dz_1) \Leb^{\delta}_{\rev\R}(dz_2)\\ 
&= 2\iint_{[0,M]^2} dy_1 dy_2 c_{y_1}c_{y_2} \int_{K\times K}e^{\tilde \beta \Gf(z_1,z_2)} \Leb_{\R + iy_1}(dz_1)\Leb_{\R + iy_2}(dz_2)\\ 
&\leq \iint_{[0,M]^2} dy_1 dy_2 c_{y_1}c_{y_2} \int_{K\times K}e^{\tilde \beta \Gf(z_1,z_2)}(\Leb_{\R + iy_1}(dz_1)\Leb_{\R+iy_1}(dz_2) +\Leb_{\R+ iy_2}(dz_1)\Leb_{\R + iy_2}(dz_2))\\ 
&\leq 2M^{2\delta}
\rev{\int_{-R}^R}\int_{-R}^{R} \frac{1 }{|x_1-x_2|^{2\tilde \beta} } dx_1 dx_2 \\
&\leq \frac{8}{1-2\tilde{\beta}}M^{2\delta}R^{\rev{2-2\tilde \beta}}<\infty.
\end{align*}
\vspace{-.2cm}
\end{proof}

Next, we prove that both approximations of the one-sided conformal Minkowski content satisfy the energy condition (away from the origin) for any $\beta<d(\kappa)$.
\begin{propn}\label{p:energymeta}
    We have that for any $\epsilon>0$ both $(m_\eta^{q,u}\1_{\Im(\cdot)>\epsilon})_{0<u<1}$ and $(\sigma_\eta^{q,\delta}\1_{\Im(\cdot)>\epsilon})_{0<\delta<1}$ satisfy the energy condition $(\mathcal E_\beta)$ for any $\beta < d=d(\kappa)=1+\tfrac\kappa8$ and $q\in \{L,R\}$. 
\end{propn}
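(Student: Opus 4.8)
The plan is to reduce the statement for both families of measures to a single uniform second-moment bound involving the SLE Green's function two-point estimate, and to handle the collection of $u$'s (resp.\ $\delta$'s) by Fatou/monotonicity tricks. Concretely, by Remark \ref{r:ebeta} and Proposition \ref{prop::convergence_better_approximation} it suffices to produce, for each fixed compact $K\subseteq \{z\in\HH:\Im(z)>\epsilon\}$ and each $\tilde\beta<d$, a finite upper bound on
\[
\sup_{0<u<1}\E{\iint_{K\times K} e^{\tilde\beta \Gf(x,y)} m_\eta^{i,u}(dx)\,m_\eta^{i,u}(dy)},
\]
and similarly with $\sigma_\eta^{i,\delta}$ in place of $m_\eta^{i,u}$; the almost sure statement of $(\mathcal E_\beta)$ then follows because the displayed expectation being finite forces the integral to be a.s.\ finite, and for the $\sigma$-family one uses the decomposition \eqref{e.basic_min}--\eqref{e.decomposition_min} which expresses $\sigma_\eta^{i,\delta}$ as an average of the $m_\eta^{i,t}$'s, so any uniform bound for the latter transfers to the former. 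Note $e^{\tilde\beta\Gf(x,y)}=|x-y|^{-\tilde\beta}|x-\bar y|^{-\tilde\beta}\le |x-y|^{-\tilde\beta}(2\epsilon)^{-\tilde\beta}$ on $K\times K$, so it is enough to bound $\E{\iint_{K\times K}|x-y|^{-\tilde\beta} m_\eta^{i,u}(dx)m_\eta^{i,u}(dy)}$.

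Next I would expand the double integral against $m_\eta^{i,u}$ using its definition: it equals
\[
u^{-2(2-d)}\iint_{K\times K} |x-y|^{-\tilde\beta}\,\P\!\left(\CR(x,\HH\setminus\eta)<u,\ \CR(y,\HH\setminus\eta)<u,\ A^i(x),A^i(y)\right)dx\,dy.
\]
The core analytic input is a two-point estimate for SLE: there is a constant $C$ (depending on $\kappa$, and on $\epsilon$ through lower bounds on $\Im(x),\Im(y)$) such that
\[
\P\!\left(\CR(x,\HH\setminus\eta)<u,\ \CR(y,\HH\setminus\eta)<u\right)\le C\,u^{2(2-d)}\,|x-y|^{-(2-d)}
\]
uniformly over $x,y\in K$ and $u<1$ — this is the standard SLE two-point upper bound proved in \cite{LawlerRezaei} (and underlies the existence of $\hat m_\eta$); dropping the indicators $A^i$ only decreases the probability. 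Plugging this in, the $u$-powers cancel exactly and one is left with a deterministic integral $C\iint_{K\times K}|x-y|^{-\tilde\beta-(2-d)}\,dx\,dy$, which is finite precisely when $\tilde\beta+(2-d)<2$, i.e.\ $\tilde\beta<d$ (using $d=1+\kappa/8\in(1,3/2]$ and working in two real dimensions). Since $\beta<d$ was arbitrary we may pick $\tilde\beta$ with $\beta<\tilde\beta<d$, giving exactly $(\mathcal E_\beta)$.

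For the $\sigma_\eta^{i,\delta}$ family, I would substitute the identity \eqref{e.basic_min} twice (once in each variable) to write $\sigma_\eta^{i,\delta}(dx)\sigma_\eta^{i,\delta}(dy)$ as a sum of terms: a ``Lebesgue $\times$ Lebesgue'' term of size $O(\delta^2)$ which is trivially controlled on $K$; two cross terms $\delta\cdot(\text{const})\int_0^1 m_\eta^{i,t}(dx)t^{-1+\delta}dt\cdot \delta\,dy$ bounded using the one-point estimate of Lemma \ref{L:onepointmain}; and the main ``$m\times m$'' double average $\delta^2(1-\kappa/8-\delta)^2\int_0^1\int_0^1 t^{-1+\delta}s^{-1+\delta}\big(\iint_{K\times K}|x-y|^{-\tilde\beta}\I{\CR(x,\HH\setminus\eta)<t}\I{\CR(y,\HH\setminus\eta)<s}\,dx\,dy\big)dt\,ds$. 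Taking expectations and applying the two-point estimate at scales $t\wedge s$ (the event forces both conformal radii below $\max(t,s)$, and one pays $|x-y|^{-(2-d)}(t\wedge s)^{?}$ — more carefully, $\P(\CR(x,\cdot)<t,\CR(y,\cdot)<s)\le C (ts)^{(2-d)}|x-y|^{-(2-d)}$ by the same two-point bound applied at the finer scale and monotonicity in $u$), the $t,s$ integrals become $\delta^2\int_0^1\int_0^1 t^{-1+\delta}s^{-1+\delta}(ts)^{?}\cdots$ which converge and remain bounded as $\delta\to0$. The main obstacle is bookkeeping: making the two-point estimate with \emph{two different} cutoff scales $t\ne s$ precise, and checking the exponents so that all the $\delta$-weighted $t,s$-integrals are uniformly bounded for $\delta\in(0,1)$; but no new probabilistic ideas beyond the two-point estimate of \cite{LawlerRezaei} and Lemma \ref{L:onepointmain} are needed.
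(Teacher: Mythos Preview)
Your proof is correct and, for the family $(m_\eta^{i,u})_u$, essentially identical to the paper's: both drop the $A^i$ indicators, replace $|x-\bar y|^{-\tilde\beta}$ by a constant on $\{\Im>\epsilon\}$, invoke the two-point bound $\P(\CR(x,\HH\setminus\eta)<u,\,\CR(y,\HH\setminus\eta)<u)\lesssim u^{2(2-d)}|x-y|^{-(2-d)}$ from \cite{LawlerRezaei} (recorded in Lemma \ref{L:upperbounds}\eqref{E:tau3}), and observe that the remaining integral $\iint_{K\times K}|x-y|^{-\tilde\beta-(2-d)}dx\,dy$ is finite exactly when $\tilde\beta<d$.

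For the $(\sigma_\eta^{i,\delta})_\delta$ family you take a slightly different route from the paper. You expand $\sigma^{i,\delta}(dx)\sigma^{i,\delta}(dy)$ into four pieces via \eqref{e.basic_min} and then control the main $m\times m$ term using the two-point bound at \emph{unequal} scales $t\ne s$ (which is also available in Lemma \ref{L:upperbounds}), and the cross terms via the one-point estimate. This works: since $\E{\iint|x-y|^{-\tilde\beta}m_\eta^{i,t}(dx)m_\eta^{i,s}(dy)}\le C\iint_{K\times K}|x-y|^{-\tilde\beta-(2-d)}dx\,dy$ uniformly in $t,s$, the outer $\delta^2\int_0^1\int_0^1 t^{-1+\delta}s^{-1+\delta}(\cdots)dt\,ds$ integrates to a bounded constant. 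The paper instead avoids the unequal-scale case altogether by exploiting the positive-definiteness of the kernel $e^{\tilde\beta\Gf}$: the inequality $2\iint e^{\tilde\beta\Gf}d\Psi_1d\Psi_2\le \iint e^{\tilde\beta\Gf}d\Psi_1d\Psi_1+\iint e^{\tilde\beta\Gf}d\Psi_2d\Psi_2$ (used already in Proposition \ref{prop.EbLebesgue}) reduces both the Lebesgue--$m$ cross term and the $m_\eta^{i,t}$--$m_\eta^{i,s}$ cross term to the diagonal case $t=s$, which is exactly the $m_\eta^{i,u}$ computation. Your approach is more hands-on and needs the mixed-scale two-point bound, while the paper's trick is shorter and only requires the equal-scale estimate; both are valid and rely on the same probabilistic input.
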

\begin{proof}  Take $\beta<\tilde \beta<d$ and $K\subseteq \HH$; without loss of generality we assume that the diameter of $K$ is smaller than or equal to 1. We start with $(m^{q,u}_\eta \1_{\Im(\cdot)>\epsilon})_{0<u<1}$. Using that 
$|z-\bar{w}|^{\tilde \beta}\rev{>}2\eps^{\tilde{\beta}}$ for all $z,w$ with $\Im(z),\Im(w)>\eps$, we bound
    \begin{align*}
    &\E{\iint_{K\times K} e^{\tilde \beta \Gf(z,w)}\rev{\1_{\Im(z)>\epsilon} \1_{\Im(w)>\epsilon}} m^{q,e^{-r}}_\eta(dz) m^{q,e^{-r}}_\eta(dw)}\\ 
    &\leq ce^{2r(2-d)} \sum_{k=1}^\infty  e^{\hat \beta k} \iint_{K\times K} \1_{e^{-k}<|z-w|<e^{-k+1}} \P(\CR(z,\HH\backslash\eta),\CR(w,\HH\backslash\eta)< e^{-r} ) dz dw,\end{align*}
    \rev{where $c$ is a constant depending only on $\eps$ and $K$.} By Lemma \ref{L:upperbounds} \eqref{E:tau3} together with the comments at the beginning of the Appendix, we have that when $|x-y|>e^{-(k+1)},$
    \begin{align*}
    \P(\CR(z,\HH\backslash\eta),\CR(w,\HH\backslash\eta)< e^{-r} ) \leq ce^{-2r(2-d)}|z-w|^{-(2-d)}\leq ce^{-2r(2-d)} e^{(k+1)(2-d)}.
    \end{align*}
    We conclude by noting that $\iint_{K\times K} \1_{e^{-(k+1)}<|z-w|<e^{-k}} dz dw < ce^{-2k}$.
    
    For the case of $(\sigma_\eta^{\delta,q}\1_{\Im(\cdot)>\epsilon})_{0<\delta<1}$,  if we decompose \[\sigma_\eta^{q,\delta}=\delta \Leb_{\HH^q} + (1-\kappa/8-\delta)\int_0^\infty ds e^{-s}m_\eta^{q,e^{-s/\delta}}\] as in \eqref{e.basic_min} and follow the same idea as for Proposition \ref{prop.EbLebesgue}, it suffices to show\footnote{This is sufficient since the energy of the cross term is controlled by $1/2$ the sum of the energy of the first and the second term by positive definiteness.} that each term individually satisfies the energy condition. This is trivial for the first term.  For the second term, we use \eqref{e.decomposition_min} and see that
    \begin{align*}
    &2\E{\int_0^\infty\int_0^\infty e^{-s} e^{-\hat s} \iint_{K\times K}\rev{\1_{\Im(z)>\epsilon} \1_{\Im(w)>\epsilon}} e^{\tilde \beta \Gf(z,w)}m_\eta^{q,s/\delta}(dz)m_\eta^{q,\hat s/\delta}(dw)ds d\hat s}\\ 
    & \leq\E{ \int_0^\infty\int_0^\infty e^{-s-\hat s} (\iint_{K\times K}\rev{\1_{\Im(z)>\epsilon} \1_{\Im(w)>\epsilon}} e^{\tilde \beta \Gf(z,w)} (m_\eta^{q,s/\delta}(dz)m_\eta^{q,s/\delta}(dw)+ m_\eta^{q,\hat s/\delta}(dz)m_\eta^{q,\hat s/\delta}(dw)))ds d\hat s}\\ 
    &\leq 2 \sup_{r\geq 0} \E{\iint_{K\times K} \rev{\1_{\Im(z)>\epsilon} \1_{\Im(w)>\epsilon}} e^{\tilde \beta \Gf(z,w)} m^{q,r}_\eta(dz) m^{q,r}_\eta(dw)}.
    \end{align*}
    This allows us to conclude using the first part of the proof.
\end{proof}

\section{Convergence of chaos on fractal base measures}\label{s.convergence_chaos}
Assume that we have some (possibly random) sequence of  measures $\mathfrak{m}_n$ on $\overline{\HH}$, independent of a GFF with free boundary condition $\Gamma$ as in Definition \ref{d:freeGFF}, such that 
{$\mathfrak{m}_n\to \mathfrak{m}=\mathfrak{m}_\infty$ almost surely for the vague topology on measures on $\overline\HH$.} Assume further that $\gamma_n$ is a sequence converging to $\gamma=\gamma_\infty \in (0,2)$ as $n\to \infty$.
The question we solve in this section is, when does the $\gamma_n$-Liouville measure for $\Gamma$ with base measure $\mathfrak{m}_n$ converge to the $\gamma$-Liouville measure for $\Gamma$ with base measure $\mathfrak{m}$?


We have the following result.
\begin{propn}\label{p.convergence_liouville}
Assume that $(\mathfrak{m}_n,\gamma_n)_n$ are as above and $(\mathfrak{m}_n)_n$ satisfy $(\mathcal E_\beta)$ with $\beta$ such that $\gamma<2\sqrt{2\beta }$. Then, 
for any deterministic continuous function $f$ with compact support in $\overline{\HH}$,
\begin{align*}
    \int f(z) \tilde \mu_{\Gamma,\eps}^{\scriptscriptstyle{\gamma_n/2}}[\mathfrak{m}_n] (dz)\stackrel{
    \eps \to 0}{\longrightarrow} \int f(z) \tilde \mu^{\scriptscriptstyle{\gamma_n/2}}_\Gamma[\mathfrak{m}_n](dz) \text{ for } n>0 \text{ and } 
    \int f(z) \tilde \mu_{\Gamma,\eps}^{\scriptscriptstyle{\gamma/2}}[\mathfrak{m}](dz)\stackrel{
    \eps \to 0}{\longrightarrow} \int f(z) \tilde \mu_\Gamma^{\scriptscriptstyle{\gamma}/2}[\mathfrak{m}](dz)
\end{align*}
in probability \rev{and in $L^1$}. \rev{When the reference measures $\mathfrak{m}_n,\mathfrak{m}$ put no mass on $\partial\mathbb{H}$, the chaos measures $\tilde{\mu}^{\scriptscriptstyle{\gamma_n/2}}_{\Gamma}[\mathfrak{m}_n],\tilde{\mu}^{\scriptscriptstyle{\gamma/2}}_{\Gamma}[\mathfrak{m}]$ above are those defined by Definition \ref{d:gmc}; when the reference measures give mass to $\partial\mathbb{H}$, they are defined by this proposition.} 

Moreover it holds that
\begin{align*}
   \int f(z) \tilde \mu^{\scriptscriptstyle{\gamma_n/2}}_\Gamma[\mathfrak{m}_n](dz) \stackrel{n \to \infty}{\longrightarrow} \int f(z) \tilde \mu^{\scriptscriptstyle{\gamma/2}}_\Gamma[\mathfrak{m}] (dz),
\end{align*}
where the limit is again in probability \rev{and in $L^1$}.

\end{propn}

Before proving the proposition let us make a remark regarding the non-existence of atoms for a Liouville measure.
\begin{remark}
    Note that if a measure $\mathfrak{m}$ satisfies condition $(\mathcal E_\beta)$, then it puts $0$ mass on singleton. A classic argument \cite[Section 3.13]{BP}, implies that the same is true for its Gaussian multiplicative chaos measure.
\end{remark}
    The proof of this proposition closely follows the construction of the Liouville measure in \cite{Ber} and the proof of convergence of Liouville measures in Appendix A of \cite{GHSS}. The main difficulty we have here is that the free boundary GFF is not a log correlated field in the sense that there is no constant $c$ such that simultaneously for all $z\in \overline{\HH}$,
    \begin{align*}
        \Gf(z,w)\sim c\log(1/|z-w|) \ \ \text{as } w\to z.
    \end{align*}
    However, the same techniques used in the aforementioned papers work in our context. The only thing we need is that 
    \begin{align*}
         \log(1/|z-w|) \lesssim \Gf(z,w)\leq 2\log(1/|z-w|)+C(z,w),
    \end{align*}
    where $C(z,w)$ is uniformly bounded over compacts of $\overline{\HH}$.
    We will try to keep the notation as close as possible to \cite{Ber,GHSS}, {however our definitions are slightly different as we are using $\gamma/2$ instead of $\gamma$.} The main lemma one needs is the following.
    \begin{lemma}\label{l.uniform_convergence_liouville}
Assume $\mathfrak{m}_n$ satisfies $(\mathcal E_\beta)$ and $\mathfrak{m}_n\to \mathfrak{m}=\mathfrak{m}_\infty$ almost surely for the {vague topology of measures on $\overline{\HH}$}. Fix $\hat \gamma<2\sqrt{2\beta}$. Then we have that for any deterministic continuous   function $f$ with compact support in $\C$, $n\in \mathbb{N}\cup \{\infty\}$, and $0<\upsilon<\hat\gamma$, $\int f(x) \tilde{\mu}_{\Gamma,\eps}^{\scriptscriptstyle{\upsilon/2}}[\mathfrak{m}_n]$ has a limit $\int f(x) \tilde{\mu}_\Gamma^{\scriptscriptstyle{\upsilon/2}}[\mathfrak{m}_n]$ in probability as $\eps\to 0$, and
\begin{align*}
\lim_{\eps \to 0}\sup_{n\in \N{\cup \{\infty\}},{\upsilon} <\hat \gamma}\mathbb{E}\left(\left|\int f(x) \tilde \mu_{\Gamma,\eps}^{\scriptscriptstyle{\upsilon/2}}[\mathfrak{m}_n] -\int f(x) \tilde \mu_{\Gamma}^{\scriptscriptstyle{\upsilon/2}}[\mathfrak{m}_n]\right|\right)=0.
\end{align*} \end{lemma}

    We first prove Proposition \ref{p.convergence_liouville} using this lemma.

\begin{proof}[Proof of Proposition \ref{p.convergence_liouville} assuming Lemma \ref{l.uniform_convergence_liouville}] The first limits of the proposition follow directly from Lemma \ref{l.uniform_convergence_liouville} 
For the second, we just need to use the triangular inequality:
\begin{align*}
   & \E{\left|\int f(z) \tilde \mu^{\scriptscriptstyle{\gamma/2}}_\Gamma[\mathfrak{m}](dz) -\int f(z) \tilde \mu^{\scriptscriptstyle{\gamma_n/2}}_\Gamma[\mathfrak{m}_n](dz)\right|}
    \nonumber\leq   \E{\left|\int f(z) \tilde \mu_{\Gamma,\eps}^{\scriptscriptstyle{\gamma/2}}[\mathfrak{m}](dz) -\int f(z) \tilde \mu^{\scriptscriptstyle{\gamma/2}}_\Gamma[\mathfrak{m}](dz) \right|} \\
    & +  \E{\left|\int f(z) \tilde \mu^{\scriptscriptstyle{\gamma_n/2}}_{\Gamma,\eps}[\mathfrak{m}_n](dz) -\int f(z) \tilde \mu_{\Gamma,\eps}^{\scriptscriptstyle{\gamma/2}}[\mathfrak{m}](dz)\right|}+
    \E{\left|\int f(z) \tilde \mu^{\scriptscriptstyle{\gamma_n/2}}_{\Gamma}[\mathfrak{m}_n](dz) -\int f(z) \tilde \mu^{\scriptscriptstyle{\gamma_n/2}}_{\Gamma,\eps}[\mathfrak{m}_n](dz)\right|}.
\end{align*}
Thanks to Lemma \ref{l.uniform_convergence_liouville}, the first and the last terms on the right-hand side go to $0$ as $\eps\to 0$, uniformly in $n$. Thus we only need the second term to go to $0$ as $n\to \infty$ for fixed $\eps>0$. This can be done because $\mathfrak{m}_n\to \mathfrak{m}$ almost surely and $\gamma_n\to \gamma$.   
\end{proof}

We now prove the lemma.

\begin{proof}[Proof of Lemma \ref{l.uniform_convergence_liouville}]
    Fix $\eps_0>0$ and $\alpha>\frac{ \hat \gamma}{2 }$ with $\beta >\frac{\alpha^2}{\rev{2} }$. Without loss of generality we assume that $f\geq 0$. We first show that the set of points
\begin{align*}
    G_\eps=G^{\alpha,\eps_0}_{\eps}:=\left \{z \in \overline \HH: \Gamma_{\eps'}(z) \leq \alpha \E{\Gamma^2_{\eps'}(z)}  \text{ for all } \eps'\in[ \eps,\eps _0]\right \}
\end{align*} 
{defined for $\eps<\eps_0$}, uniformly carries almost all the mass for $\tilde \mu_{\Gamma,\eps}^{\scriptscriptstyle{\gamma'/2}}[\mathfrak{m}_n]$.
To do this, define
\begin{align}
    &\rev{I_\eps^{n}}=I_\eps^{n,\upsilon}:= \int f(z) \1_{(G_\eps)^c}\tilde \mu_{\Gamma,\eps}^{\scriptscriptstyle{\gamma'/2}}[\mathfrak{m}_n](dz), &\rev{J_\eps^{n}}&=J_\eps^{n,\upsilon}:= \int f(z) \1_{G_\eps}\tilde\mu_{\Gamma,\eps}^{\scriptscriptstyle{\gamma'/2}}[\mathfrak{m}_n](dz).
\end{align}
The same proof as \cite[Proof of Lemma 3.4]{Ber} implies the existence of $p(\alpha,\eps_0)$, converging to $0$ as $\eps_0 \to 0$, such that
\begin{align*}
   {\sup_{n\in \N\cup \{\infty\}, \eps<\eps_0, \upsilon<\hat{\gamma}}} \E{I_\eps^n} \leq \int f(z) p(\alpha, \eps_0) dz. 
\end{align*}
Thus, it suffices to show that for any $\eps_0>0$
\begin{align}\label{e.J2}
    \lim_{\eps,\eps'\to 0} {\sup_{n\in \N\cup \{\infty\}, \upsilon<\hat{\gamma}}} \E{(J^{n,\upsilon}_\eps-J^{n,\upsilon}_{\eps'})^2 } =0.
\end{align}
This implies the convergence in probability of $\int f \tilde \mu_{\Gamma,\eps}^{\scriptscriptstyle{\gamma'/2}}[\mathfrak{m}_n]$ for fixed $n,\upsilon$ as in \cite{Ber}, and also yields the desired uniform $L^1$ convergence.
We prove \eqref{e.J2} in the same way as in the Appendix A of \cite{GHSS}, by showing that there exists a function $F:\HH\to \rev{[0,1]}$ such that uniformly in $n$ and $\upsilon\leq\hat \gamma$
\begin{align*}
    \E{(J_\eps^n)^2},\E{J_\eps^nJ_{\eps'}^n } \to \iint_{\HH\times \HH} F(z,w)f(z)f(w) \mathfrak{m}_n(dz)\mathfrak{m}_n(dw).
\end{align*}  
We just prove the case $\eps'=\eps$ as the case for $\eps'\ne\eps$  is analogous (and will give rise to the same $F$). Similarly to \cite{Ber} (except we use $\frac{\upsilon}{2 }$ instead of $\upsilon $):
  \begin{align*}
    \E{(J_\eps^n)^2}= \iint_{\HH\times \HH}e^{\frac{\upsilon^2 }{4 }\E{\Gamma_\eps(z)\Gamma_\eps(w)}}\rev{f(z)f(w)}\widetilde \P\left(G_\eps(z)\cap G_\eps(w) \right)\mathfrak{m}_n(dz)\mathfrak{m}_n(dw),
\end{align*}
where $\tilde \P$ is the probability measure defined by equation (3.8) in \cite{Ber}, that is to say
\begin{align*}
\frac{d \tilde \P }{d \P }= \exp\left(\frac{\upsilon }{2 } (\Gamma_\epsilon(z)+\Gamma_\epsilon (w))- \frac{\upsilon^2 }{8 }\E{(\Gamma_\epsilon(z)+\Gamma_\epsilon(w))^2}\right) 
\end{align*}

One can check that for any given $\delta$, for all $|z-w|\geq \delta $, $ \tilde \P\left(G_\eps(z)\cap G_\eps(w) \right)$ is converging uniformly \rev{(see below equation (4.4) in \cite{Ber})} to some $F(z,w)$ as $\eps\to 0$  {(which coincides with the limit of $\tilde \P\left(G_\eps(z)\cap G_{\eps'}(w) \right)$ as $(\epsilon,\epsilon')\to 0$)}. Thus, it suffices to show that \begin{align*}
    {\sup_{n\in \N\cup \{\infty\}, \eps<\eps_0, \upsilon<\hat{\gamma}}}\E{\iint_{|z-w|\leq \delta}e^{\frac{\upsilon^2 }{4 }\E{\Gamma_\eps(z)\Gamma_\eps(w)}}\widetilde \P\left(G_\eps(z)\cap G_\eps(w) \right)\mathfrak{m}_n(dz)\mathfrak{m}_n(dw)}\to 0
\end{align*}
as $\delta \to 0$.
This is the only point where our proof differs from that of \cite{GHSS}. To do this, we follow \cite{Ber} from equation (3.8) until the end of that proof.  

Taking $r= \eps \vee |z-w|\rev{/2}$, we have that 
\begin{align*}
\widetilde \P\left(G_\eps(z)\cap G_\eps(w) \right)&\leq \widetilde \P\left( \Gamma_r(z)\leq \alpha  \E{\Gamma_{r}^2(z)}\right).
\end{align*}

Now note that by Girsanov's theorem, under the probability $\widetilde \P$, the law of $\Gamma_r(z)$ is that of a Gaussian random variable with variance $\E{\Gamma^2_r(z)}= \E{\Gamma_\epsilon(z)\Gamma_\epsilon(w)} +O(1)$, \rev{where the error term is at most $\log(16)$}, and mean given by
\begin{align*}
    \frac{\upsilon}{2 }\E{ \Gamma_r(z)( \Gamma_\eps(z) + \Gamma_\eps(w))}= \frac{\rev{\upsilon}}{2 }(\Gf(z,w))_{r,\eps} + \frac{\rev{\upsilon}}{2 }(\Gf(z,z))_{r,\eps}= \upsilon(\Gf(z,w))_{\eps,\eps} + O(1),\end{align*}
    where
    \begin{align*}
    (\Gf(z,w))_{r,\eps}:=\E{\Gamma_r(z) \Gamma_\eps(w)}.    \end{align*}
    \rev{and the $O(1)$ term is smaller than $\log(32)$.}

Thus, 
\begin{align*}
\widetilde \P\left( \Gamma_r(z)\leq \alpha  \E{\Gamma_{r}^2(z)}\right)&= \P\left(\mathcal N\left ( \upsilon (\Gf(z,w))_{\epsilon,\epsilon}+O(1), \frac{\gamma'^2 }{2 }(\Gf(z,\rev{z}))_{r,r}\right ) \leq \alpha (\Gf(z,\rev{z}) )_{r, r} \right) \\ 
&\leq \exp\left( -\frac{1 }{2 }(\upsilon-\alpha)^2\left(\sqrt{(\Gf(z,w))_{\epsilon,\epsilon}}+ O(1)\right)^2 \right).
\end{align*} 

This implies, that for any $\upsilon <2\alpha$ we have that $\upsilon^2/4- \frac{1 }{2 }(\upsilon-\alpha)^2 <\beta$ and thus
\begin{align*}
    &\E{\iint_{|z-w|\leq \delta}e^{\frac{\upsilon^2 }{4 }\E{\Gamma_\eps(z)\Gamma_\eps(w)}}\widetilde \P\left(G_\eps(z)\cap G_\eps(w) \right)\mathfrak{m}_n(dz)\mathfrak{m}_n(dw)}\\ 
    &\leq \E{\iint_{|z-w|\leq \delta}e^{\frac{\upsilon^2 }{4 }(\Gf(z,w))_{\epsilon, \epsilon}}\exp\left( -\frac{1 }{2 }(\upsilon-\alpha)^2\left(\sqrt{(\Gf(z,w))_{\epsilon,\epsilon}} + O(1)\right)^2 \right)\mathfrak{m}_n(dz)\mathfrak{m}_n(dw)}\\ 
        &\leq \E{\iint_{|z-w|\leq \delta} e^{\beta \Gf(z,w)} d\mathfrak{m}_n d\mathfrak{m}_n} \\ 
        &\leq \E{\iint_{|z-w|\leq \delta} e^{\tilde \beta \Gf(z,w)} \mathfrak{m}_n (dz)\mathfrak{m}_n}(dw)\times \sup_{z,w: |z-w|\leq \delta} e^{-(\tilde \beta - \beta) \Gf(z,w) } ,
\end{align*}
\rev{where $\tilde\beta$ is defined from $(\mathcal{E}_\beta)$.}
We conclude the proof by taking supremum over $n\in \N$, $0\leq \upsilon <\hat \gamma$ and  $\epsilon<\epsilon_0$ and then taking $\delta \to 0$.
\end{proof}

\section{Chaos on subcritical Minkowski content}\label{s.subcritical}
Let $\Gamma$ be a free boundary condition GFF on $\HH$, as in Definition \ref{d:freeGFF}, and $\eta$ be an independent SLE$_\kappa$ from $0$ to $\infty$ in $\HH$ with $\kappa\in \rev{(}0,4)$.
Suppose that $\eta=(\eta(t))_{t\ge 0}$ is parameterised by half-plane capacity, and for $t>0$ let $f_t$ be the centered Loewner map from $\HH\setminus \eta([0,t])\to \HH$. 
Fix $0<T<\infty$ and define $\Gamma^T:=\Gamma\circ f_T^{-1}+(\tfrac{2}{\sqrt{\kappa}}+\tfrac{\sqrt{\kappa}}{2})\log|(f_T^{-1})'(\cdot)|$. By Theorem \ref{T:gffinvariance}
\[ \Gamma^T \overset{(d)}{=} \tilde{\Gamma} +C+\frac{2}{\gamma}\log|\frac{(\cdot)}{f_T^{-1}(\cdot)}| \]
 where $\tilde{\Gamma}$ is a free boundary GFF with the same covariance as $\Gamma$, and $C$ is a random constant (that is measurable with respect to the pair $(f_T,\tilde{\Gamma})$). Note that $f_T^{-1}$ is not independent from $\tilde \Gamma$.

\begin{propn}\label{p:mainsub}
Let \rev{$\kappa\in (0,4)$} and set $\gamma=\sqrt{\kappa}$. Let $m_\eta$ be as  in Corollary \ref{C:confminkone} and $T>0$. Then, on an event of probability one, the measure $\nu_\Gamma^{m_\eta,\gamma}$ is well defined, and we have for all $0< r < s \le T$,
\begin{equation}\label{e.main_result_subcritical} \frac{2 }{(4-\kappa)(1-\tfrac{\kappa}{8})}\nu_{\Gamma^T}^{\gamma}(f_T(\eta([r,s]))\cap\R^-)=\mu_\Gamma^{\scriptscriptstyle{\gamma/2}}[m_\eta](\eta([r,s]))  = \frac{2 }{(4-\kappa)(1-\tfrac{\kappa}{8})}\nu_{\Gamma^T}^{\gamma}(f_T(\eta([r,s]))\cap \R^+).
 \end{equation} \end{propn}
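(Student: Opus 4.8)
\textbf{Proof strategy for Proposition \ref{p:mainsub}.}
The plan is to prove \eqref{e.main_result_subcritical} by realising both sides as limits of the identity \eqref{e.approximatemain} from the proof outline, and then invoking Proposition \ref{p.convergence_liouville} to pass to the limit $\delta\to 0$ on each side. First I would fix $\delta\in(0,\gamma/2)$ and work with the approximate one-sided Minkowski content $\sigma_\eta^{i,\delta}$ of Proposition \ref{prop::convergence_better_approximation}, together with the exponent $\gamma_\delta$ and the "dual" exponent $\tilde\delta$ to be computed below. The key computational step is the change-of-coordinates identity: applying Lemma \ref{lem:coc} to the map $f_T:\HH\setminus\eta([0,T])\to\HH$, one has $(f_T)_*\nu_\Gamma^{\sigma_\eta^{i,\delta},\gamma_\delta}=\nu_{\Gamma\circ f_T^{-1}}^{(\sigma_\eta^{i,\delta})',\gamma_\delta}$ on compact subsets of $\HH\setminus f_T(\eta([T,\infty))\cup\R)$, where $(\sigma_\eta^{i,\delta})'(dz)=|(f_T^{-1})'(z)|^{\gamma_\delta^2/8}(f_T)_*\sigma_\eta^{i,\delta}(dz)$. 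Using $\CR(f_T^{-1}(z),\HH\setminus\eta)=\CR(z,\HH)\,|(f_T^{-1})'(z)|=2\Im(z)\,|(f_T^{-1})'(z)|$ (since $\CR(z,\HH)=2\Im(z)$), and recalling $\Gamma^T=\Gamma\circ f_T^{-1}+(\tfrac2{\sqrt\kappa}+\tfrac{\sqrt\kappa}2)\log|(f_T^{-1})'|$, the Jacobian factors and the $\log|(f_T^{-1})'|$ shift must be chosen to combine exactly into $\tilde\delta(2\Im(z))^{-1+\tilde\delta}\,dz=\Leb_\R^{\tilde\delta}(dz)$ times the correct power of the derivative to rewrite $\nu_{\Gamma\circ f_T^{-1}}$ as $\tilde\nu_{\Gamma^T}$; bookkeeping the exponents $d-2+\delta=\tfrac\kappa8-1+\delta$ against $-1+\tilde\delta$ and against $\gamma_\delta^2/8$ forces $\gamma_\delta=\sqrt\kappa+O(\delta)$ and $\tilde\delta=\delta(1-\kappa/4)+o(\delta)$. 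This yields \eqref{e.approximatemain}: $(f_T)_*\nu_\Gamma^{\sigma_\eta^{i,\delta},\gamma_\delta}=\tfrac{\delta}{\tilde\delta}\,\tilde\nu_{\Gamma^T}^{\mathbf 1_{A^i_T}\Leb_\R^{\tilde\delta},\gamma_\delta}$, as an identity of measures, a.s.\ for each fixed $\delta$.

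Next I would pass to the limit. On the left-hand side: by Proposition \ref{prop::convergence_better_approximation}, $\sigma_\eta^{i,\delta}\to m_\eta$ vaguely, a.s., and by the penultimate Proposition of Section \ref{s.Minkowski_content} the family $(\sigma_\eta^{i,\delta}\mathbf 1_{\Im(\cdot)>\epsilon})_{\delta}$ satisfies $(\mathcal E_\beta)$ for any $\beta<d$; since $\gamma^2=\kappa<8d=8+\kappa$ we have $\gamma<2\sqrt{2\beta}$ for $\beta$ close enough to $d$, so Proposition \ref{p.convergence_liouville} applies (using that $\Gamma^T$ is absolutely continuous to a free boundary GFF via Theorem \ref{T:gffinvariance}, pulled back under $f_T^{-1}$, i.e.\ working with $\Gamma$ and $f_T$ on the slit domain). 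Hence $\nu_\Gamma^{\sigma_\eta^{i,\delta},\gamma_\delta}\to\nu_\Gamma^{m_\eta,\gamma}$ in probability, tested against continuous compactly supported $f$ on compacts of $\HH\setminus(\eta([T,\infty))\cup\R)$; pushing forward, the left side of \eqref{e.approximatemain} converges to $(f_T)_*\nu_\Gamma^{m_\eta,\gamma}$. On the right-hand side: $\Leb_\R^{\tilde\delta}\to\tfrac12\Leb_\R$ vaguely by Proposition \ref{prop.convergenceLebesgue}, the family satisfies $(\mathcal E_\beta)$ for $\beta<1/2$ by Proposition \ref{prop.EbLebesgue} (and $\gamma<2<2\sqrt{2\cdot\frac12\cdot\tilde\beta}$ for $\tilde\beta$ slightly above... more precisely one needs $\gamma<2\sqrt{2\beta}$ with $\beta<1/2$, which fails for $\gamma$ near $2$; this is why the statement is restricted to $\kappa\in[0,4)$, i.e.\ $\gamma<2$, and one picks $\beta\in(\gamma^2/8,1/2)$), so again Proposition \ref{p.convergence_liouville} gives $\tilde\nu_{\Gamma^T}^{\mathbf 1_{A^i_T}\Leb_\R^{\tilde\delta},\gamma_\delta}\to\tilde\nu_{\Gamma^T}^{\mathbf 1_{A^i_T}\tfrac12\Leb_\R,\gamma}=\tfrac12\tilde\nu_{\Gamma^T}^{\Leb_{\R^i},\gamma}$ in probability, where $\Leb_{\R^i}$ is Lebesgue measure on $\R^-$ for $i=L$ and on $\R^+$ for $i=R$ (since $f_T$ maps the left side of $\eta$ near the tip to $\R^-$ and the right side to $\R^+$). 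The prefactor $\delta/\tilde\delta\to (1-\kappa/4)^{-1}=4/(4-\kappa)$, so combining, $(f_T)_*\nu_\Gamma^{m_\eta,\gamma}=\tfrac{4}{4-\kappa}\cdot\tfrac12\tilde\nu_{\Gamma^T}^{\Leb_{\R^i},\gamma}=\tfrac2{4-\kappa}\tilde\nu_{\Gamma^T}^{\Leb_{\R^i},\gamma}$ for $i=L,R$. Applying both sides to the set $f_T(\eta([r,s]))$ — noting $\nu_\Gamma^{m_\eta,\gamma}$ is supported on $\eta$ and charges no point, so $(f_T)_*\nu_\Gamma^{m_\eta,\gamma}(f_T(\eta([r,s])))=\nu_\Gamma^{m_\eta,\gamma}(\eta([r,s]))$ for $r<s\le T$ — gives \eqref{e.main_result_subcritical}. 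The well-definedness of $\nu_\Gamma^{m_\eta,\gamma}$ follows from $(\mathcal E_\beta)$ for $m_\eta\mathbf 1_{\Im(\cdot)>\epsilon}$ (Remark \ref{r:ebeta}) plus a separate argument near $\R$, or is inherited as the limit just constructed.

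There are two points requiring care. First, all the convergence statements in Proposition \ref{p.convergence_liouville} are for a fixed free boundary GFF independent of the measures, whereas here the relevant field $\Gamma^T$ depends on $\eta$ through $f_T$; the clean way around this is to do everything upstairs, i.e.\ phrase both sides as GMC for the single field $\Gamma$ (independent of $\eta$) against $\eta$-measurable reference measures ($\sigma_\eta^{i,\delta}$ on one side, and $(f_T^{-1})_*[\mathbf 1_{A^i_T}\Leb_\R^{\tilde\delta}]$ dressed by the appropriate Jacobian on the other, which by Lemma \ref{lem:coc} is exactly $(\sigma_\eta^{i,\delta})'$ pulled back), and only push forward by $f_T$ at the very end. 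Second — and this is the main obstacle — one must handle the behaviour near the "bad" boundary $f_T(\eta([T,\infty))\cup\R)$, where $f_T^{-1}$ and its derivative degenerate and the energy bounds $(\mathcal E_\beta)$ are only claimed away from $\R$; controlling the mass of $\nu_\Gamma^{m_\eta,\gamma}$ and of the Lebesgue-side chaos in shrinking neighbourhoods of this set, uniformly in $\delta$, so as to justify the exchange of limits and the restriction to $f_T(\eta([r,s]))$ with $s\le T$ strictly, is the technically delicate part. I expect this to be dispatched by a first-moment bound: $\E{\tilde\nu_{\Gamma^T}^{\mathbf 1_{A^i_T}\Leb_\R^{\tilde\delta},\gamma_\delta}(V)}=\mathbf 1_{A^i_T}\Leb_\R^{\tilde\delta}(V)$ is small for $V$ a small neighbourhood of the bad set and $\delta$ small, uniformly, by Proposition \ref{prop.convergenceLebesgue}-type estimates, and similarly on the Minkowski side, reducing everything to compact subsets where Proposition \ref{p.convergence_liouville} applies verbatim.
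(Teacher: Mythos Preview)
Your overall strategy matches the paper's proof exactly: establish the approximate identity \eqref{e.approximatemain} via Lemma \ref{lem:coc}, then pass to the limit $\delta\to 0$ on both sides using Propositions \ref{prop::convergence_better_approximation}, \ref{prop.convergenceLebesgue}, \ref{prop.EbLebesgue} and \ref{p.convergence_liouville}, with Theorem \ref{T:gffinvariance} justifying the application of the latter to $\Gamma^T$. Your identification of $\tilde\delta$, the prefactor $2/(4-\kappa)$, and the two ``points requiring care'' are all correct, and the paper handles the second one just as you suggest: restrict to the truncated domain $N_u=\HH\setminus(\{d(\cdot,\eta([T,\infty))\cup\R)<u\}\cup\{|\cdot|\ge 1/u\})$, prove the identity there, and let $u\to 0$ by monotone convergence.

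There is, however, a genuine error in the change-of-coordinates step. You write $\CR(f_T^{-1}(z),\HH\setminus\eta)=\CR(z,\HH)\,|(f_T^{-1})'(z)|$, but this is false: $f_T$ uniformises $\HH\setminus\eta([0,T])$, not $\HH\setminus\eta$, so the correct transformation is $\CR(f_T^{-1}(z),\HH\setminus\eta([0,T]))=\CR(z,\HH)\,|(f_T^{-1})'(z)|$. With your reference measure $\sigma_\eta^{i,\delta}(dz)=\delta\,\CR(z,\HH\setminus\eta)^{d-2+\delta}\mathbf 1_{A^i(z)}\,dz$, the pushforward picks up the factor $\CR(z,\HH\setminus f_T(\eta([T,\infty))))^{d-2+\delta}$ rather than $(2\Im z)^{d-2+\delta}$, and the clean reduction to $\Leb_\R^{\tilde\delta}$ collapses. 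The paper repairs this by working not with $\sigma_\eta^{i,\delta}$ but with
\[
\pi^\delta(dz):=\delta\,\CR(z,\HH\setminus\eta([0,T]))^{d-2+\delta}\mathbf 1_{A^i(z)}\,dz
=\Bigl(\tfrac{\CR(z,\HH\setminus\eta([0,T]))}{\CR(z,\HH\setminus\eta)}\Bigr)^{d-2+\delta}\sigma_\eta^{i,\delta}(dz),
\]
for which the change of coordinates goes through verbatim; the ratio of conformal radii is bounded on $N_u$ and tends to $1$ as $z\to\eta([0,T])$, so $\pi^\delta\to m_\eta$ as well and the energy condition is inherited, allowing Proposition \ref{p.convergence_liouville} to be applied to $(\pi^\delta)_\delta$ exactly as you intended for $(\sigma_\eta^{i,\delta})_\delta$. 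With this single substitution your argument is complete.
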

\rev{In plain words: we can measure the length of $\eta([r,s])$ using the Gaussian multiplicative chaos for $\Gamma$ with parameter $\gamma/2$ and reference measure $m_\eta$ (the conformal Minkowski content of $\eta$). Moreover, this is the same - up to an explicit multiplicative constant - as measuring the length of the image of the left hand side of $\eta$ under the conformal map $f_T$,  using the Gaussian multiplicative chaos for $\Gamma^T$ with parameter $\gamma/2$ and reference measure given by Lebesgue measure on $\R$. The same holds if left is replaced with right in the previous sentence.}

{We recommend that before delving into the proof, the reader revisits points (1), (2), and (3) of Section \ref{ss:proofidea}. These points offer a concise summary of the proof that follows.}

\begin{proof}
Define $N_u$ for $u>0$ by   \begin{align*}
  N_u^T=N_u:=\HH\setminus (\{z\in \HH: d(z,\eta([T,\infty))\cup \R)< u\}\cup\{z: |z|>1/u\}).
  \end{align*} 
\rev{That is, we delete from $\HH$ all points that are too big (absolute value $>1/u$) or too close to $\eta([T,\infty))$ or $\mathbb{R}$ (within distance $u$). We will prove the proposition when we restrict the measures to $N_u^T$ for arbitrary $u$, and then conclude by letting $u\to 0$. See Figure \ref{fig:proof}.} 

We start by showing that $\mu_{\Gamma}^{\scriptscriptstyle{\gamma/2}}[m_\eta]$ is well defined. We note that \rev{by Proposition \ref{p:energymeta} and Remark \ref{r:ebeta}}, as a measure on $N_u^\infty$, $m_\eta$ satisfies the energy condition $(\mathcal E_\beta)$ for $\beta<d= 1+\kappa/8$. Therefore, the measure $\mu_{\Gamma}^{\scriptscriptstyle{\gamma/2}}[m_\eta]$ is well defined as a measure in $N_u^\infty$. Taking $u\to 0$, and using monotone convergence together with the fact that $m_\eta$ puts no mass in $\R$, we conclude that $\mu_{\Gamma}^{\scriptscriptstyle{\gamma/2}}[m_\eta]$ is well defined as a measure on $\HH$.

Next, we prove \eqref{e.main_result_subcritical}, which is the real content of the proposition. Without loss of generality we show the second equality (the first follows in an identical manner). For this, it suffices to prove that for every $u>0$
\begin{equation}\label{e.mainthingtoprove}
(f_T)_*(\mu_{\Gamma}^{\scriptscriptstyle{\gamma/2}}[m_\eta])= \frac{2}{(4-\kappa)(1-\tfrac{\kappa}{8})}\nu_{\Gamma^T}^{\gamma}|_{\R^+}\text{ almost surely as measures on } \overline{f_T(N_u)}, \end{equation}
where $f_*\mathfrak{m}$ represents the pushforward of $\mathfrak{m}$ by $f$.
 This suffices because thanks to Lemmas \ref{lem:weakconv} and \ref{lem:cts}, the occupation measure $m_\eta$ is continuous at $0$ and $\eta(T)$, and $m_\eta$  puts no mass on $\R$. Thus one can take $u\to 0$ and conclude using the monotone convergence theorem.

\begin{figure}[h]
	\includegraphics[width=\textwidth]{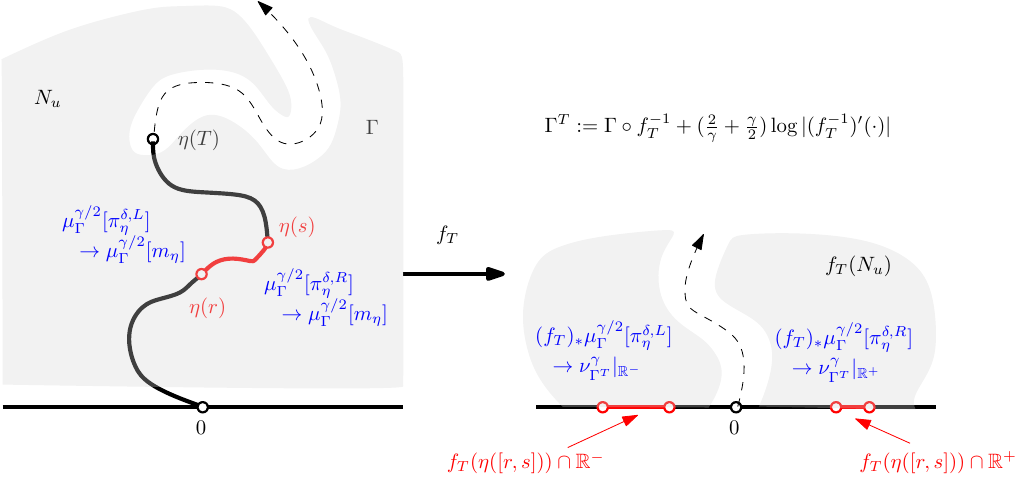}
	\rev{\caption{The regions $N_u$ and $f_T(N_u)$ are shaded in grey. The strategy for the proof of Proposition \ref{p:mainsub} is to define appropriate measure $\pi_\eta^{\delta,L}$ and $\pi_\eta^{\delta,R}$ supported on the left and right of $\eta$, that both converge to $m_\eta$. By continuity of Gaussian multiplicative chaos with respect to the base measure we have the convergence of $\mu_\Gamma^{\scriptscriptstyle{\gamma/2}}[\pi_\eta^{\delta,q}]\to \mu_\Gamma^{\scriptscriptstyle{\gamma/2}}[m_\eta]$ for $q=\{L,R\}$. This is illustrated in blue on the left portion of the figure. We then show that the images of the measures $\mu_\Gamma^{\scriptscriptstyle{\gamma/2}}[\pi_\eta^{\delta,q}]$ under $(f_T)*$ converge to chaos with respect to Lebesgue measure on the real line (restricted to $\R^-$ when $q=L$ and $\R^+$ when $q=R$).	\label{fig:proof}}}
\end{figure}

\rev{The proof of \eqref{e.mainthingtoprove} consists of verifying the following string of identities.} Recall that $l(z)=4\log|z|\I{|z|\ge 1}$ and set $\gamma_\delta:= \sqrt{\kappa}+ 4\kappa^{-1/2}(1-\sqrt{1- \delta \kappa/2})$. \rev{Then for any $u>0$ we have that:
\begin{align}
   (1-\frac{\kappa}{8})(f_T)_*(\mu_\Gamma^{\scriptscriptstyle{\gamma/2}}[m_\eta]) & {=} 
   (f_T)_*(\lim_{\delta\to 0}\mu_\Gamma^{\scriptscriptstyle{\gamma_\delta/2}}[\delta \CR(\cdot,\HH\setminus \eta([0,T]))^{-1+\tfrac{\kappa}{8}+\delta}\mathbf{1}_{A^R(\cdot)}])
   \label{1}\\ 
    & {=} \lim_{\delta\to 0} \mu_{\Gamma^T}^{\scriptscriptstyle{\gamma_\delta/2}}[\delta \CR(\cdot, \HH)^{-1+\tfrac{\kappa}{8}+\delta}\mathbf{1}_{A^R(f_T^{-1}(\cdot))}]
    \label{2} \\ & {=}  \lim_{\delta\to 0} \tilde{\mu}_{\Gamma^T}^{\scriptscriptstyle{\gamma_\delta/2}}[e^{(\gamma_\delta^2/8)l(\cdot)}\delta \CR(\cdot, \HH)^{-1+\tfrac{\kappa-\gamma_\delta^2}{8}+\delta}\mathbf{1}_{A^R(f_T^{-1}(\cdot))}]
       \label{3} \\ & {=} 
       \frac{2}{4-\kappa} \tilde{\mu}_{\Gamma^T}^{\scriptscriptstyle{\gamma/2}}[e^{(\gamma^2/8)l(\cdot)}\mathcal{L}_\R]\big|_{\R^+} 
       \label{4} \\ & {=} 
       \frac{2}{4-\kappa} {\nu}_{\Gamma^T}^{\gamma}\big|_{\R^+} 
       \label{5}
\end{align}
as measures on $\overline{f_T(N_u)}$. 
In brief, \eqref{1} is due to continuity with respect to the base measure, \eqref{2} holds by changing  coordinates, \eqref{3} is switching from $\mu$ to $\tilde{\mu}$, \eqref{4} is due continuity with respect to the base measure again, and \eqref{5} is going from $\tilde{\mu}$ to $\nu$, using \eqref{eq.relmunu}, to conclude.}

 \rev{Let us now fix $u>0$ and prove in full detail that \eqref{1}-\eqref{5} hold almost surely as measures on $\overline{f_T(N_u)}$. This yields \eqref{e.mainthingtoprove} for arbitrary $u$ and thus completes the proof of the proposition.} 
 \vspace{1em}
 
\noindent \rev{{\bf Proof of \eqref{1}}.} \rev{We need to show that 
$$(1-\frac{\kappa}{8})\mu_\Gamma^{\scriptscriptstyle{\gamma/2}}[m_\eta]  {=} 
 \lim_{\delta\to 0}\mu_\Gamma^{\scriptscriptstyle{\gamma_\delta/2}}[\delta \CR(\cdot,\HH\setminus \eta([0,T]))^{-1+\tfrac{\kappa}{8}+\delta}\mathbf{1}_{A^R(\cdot)}]
 $$
   as measures on $\overline{N_u}$, for which we will use Propositions \ref{p.convergence_liouville} and \ref{prop::convergence_better_approximation}.}
\rev{For conciseness, we denote 
\begin{align*}
\pi^{\delta,R}_\eta:= \delta\, \CR(z,\mathbb{H}\setminus \eta([0,T]))^{-1+\tfrac{\kappa}{8}+\delta} \1_{A^R(z)} dz= \left(\frac{\CR(z,\HH\setminus \eta([0,T]))}{\CR(z,\HH\setminus \eta)}\right)^{-1+\tfrac{\kappa}{8}+\delta}\sigma_\eta^{\delta,R}
\end{align*}
on $N_u$. Note that measures $\pi^{\delta,R}_\eta$ are almost exactly the measures $\sigma_\eta^{\delta,R}$, which we know from Proposition \ref{prop::convergence_better_approximation} converge to $m_\eta$, but there is a small modification (the ratio of conformal radii) which means that they behave better under the conformal transformation $f_T$.  In the end, this makes no difference, because the ratio of conformal radii is uniformly bounded over $z\in {N_u}\cap \HH^R$ and as $z\in N_u\cap \HH^R \to \bar z \in \eta([0,T])$, it converges to $1$. This last convergence statement can be verified by observing that if $p_z$ is the probability for a Brownian motion started at $z$ to exit $\HH\setminus \eta([0,T])$ through $\eta([T,\infty))$, then $p_z\to 0$ as $z\to \bar{z}$. Therefore if $g_z:\HH\setminus \eta([0,T])\to \D$ is conformal with $g(z)=0$, we must have $(1-r_z)\D\subset g_z(\HH^R)$ with $r_z\to 1$ as $z\to \bar{z}$, by Beurling's projection theorem. Now, the ratio of conformal radii in question is equal to $\CR(0,g_z(\HH^R))$, and the previous sentence implies that this converges to $1$ as $z\to \bar{z}$.} 

The considerations above imply that $(\pi^{\delta,R}_\eta)_{\delta>0}$ satisfy the energy condition $(\mathcal E_\beta)$ for any $\beta <\rev{2}$ and that $\pi^{\delta,R}_\eta \to (1-\tfrac{\kappa}{8})m_\eta$ almost surely as a measure on ${N_u}$ as $\delta\to 0$. Thus, by Proposition \ref{p.convergence_liouville} \rev{(since Proposition \ref{p.convergence_liouville} is for $\tilde{\mu}$, we have to switch from $\mu$ to $\tilde{\mu}$ and back again, which is straightforward using the definition of $\tilde{\mu}$ compared to $\mu$ and \eqref{e:limvarGamma})}:
\begin{align}\label{e.limitpi}
   (1-\tfrac{\kappa}{8}) \mu_\Gamma^{\scriptscriptstyle{\gamma/2}}[m_\eta] &= (1-\tfrac{\kappa}{8})\tilde \mu^{\scriptscriptstyle{\gamma/2}}_\Gamma[e^{({\gamma^2}/8)l(\cdot)}(2\Im(\cdot))^{-\gamma^2/8}m_\eta] \nonumber \\
   &=\lim_{\delta \to 0} \tilde \mu_\Gamma^{ \scriptscriptstyle{\gamma_\delta/2}}[e^{({\gamma_\delta^2}/8)l(\cdot)}(2\Im(\cdot))^{-\gamma_\delta^2/8} \pi^{\delta,R}_\eta]= \lim_{\delta \to 0} \mu_\Gamma^{\scriptscriptstyle{ \gamma_\delta/2}}[\pi_\eta^{\delta,R}],
\end{align}
where the limits are in probability. 
\vspace{1em}

\noindent\rev{{\bf Proof of \eqref{2}}}
Since $f_T$ is continuous, it suffices to show that for fixed $\delta>0$, 
\begin{equation}\label{e.equalityapproximatemeasures}(f_T)_*\left(\mu_\Gamma^{\scriptscriptstyle{\gamma_\delta/2}}[\delta \CR(\cdot,\HH\setminus \eta([0,T]))^{-1+\tfrac{\kappa}{8}+\delta}\mathbf{1}_{A^R(\cdot)}]\right)
    {=} \mu_{\Gamma^T}^{\scriptscriptstyle{\gamma_\delta/2}}[\delta \CR(\cdot, \HH)^{-1+\tfrac{\kappa}{8}+\delta}\mathbf{1}_{A^R(f_T^{-1}(\cdot))}]
\end{equation}
as measures on $\overline{f_T(N_u)}$.

To do this, we start from Lemma \ref{lem:coc} setting $H=\HH\setminus \eta([T,\infty))$, $f=f_T$ and $\mathfrak{m}=\pi^{\delta,R}_\eta$, which gives us that \[ (f_T)_* \mu_\Gamma^{\scriptscriptstyle{\gamma_\delta/2}}[\delta \CR(\cdot,\HH\setminus \eta([0,T]))^{-1+\tfrac{\kappa}{8}+\delta}\mathbf{1}_{A^R(\cdot)}]=\mu_{\Gamma\circ f_T^{-1}}^{\scriptscriptstyle{\gamma_\delta/2}}[\mathfrak{m}'] \text{ as measures on } f_T(N_u\setminus \eta([0,T]))\]
with \[ \mathfrak{m}'(dz)=|(f_T^{-1})'(z)|^{\tfrac{\gamma_\delta^2}8}(f_T)_*\pi^{\delta,R}(dz)=\delta \CR(z,\HH)^{-1+\tfrac{\kappa}8+\delta}|(f_T^{-1})'(z)|^{1+\tfrac{\gamma_\delta^2}8+\tfrac{\kappa}8+\delta}\1_{A^R_T(z)} dz. \]
Recall that $\Gamma\circ f_T^{-1}$ is equal to $\Gamma^T-(\frac{2 }{\sqrt{\kappa} }+ \frac{ \sqrt{\kappa}}{2}) \log|(f_T^{-1})'(\cdot)|$, and observe that if we are in a compact set inside $f_T(N_u\backslash \eta([0,T])\rev{)}$ the term $\log|(f_T^{-1})'(\cdot)|$ does not blow up. Thus,
\[\mu_{\Gamma\circ f_T^{-1}}^{\scriptscriptstyle{\gamma_\delta/2}}[\mathfrak{m}']  = \mu_{\Gamma^T}^{\scriptscriptstyle{\gamma_\delta/2}}[\mathfrak{m}'']  \text{ as measures on any compact $K\subseteq f_T(N_u\backslash \eta([0,T])$},\]
where
\begin{align*} \mathfrak{m}''(dz)&=|(f_T^{-1})(y)|^{-\tfrac{\gamma_\delta}{\sqrt{\kappa}}-\tfrac{\gamma_\delta \sqrt{\kappa}}{4}}\mathfrak{m}'(dz)\\ 
&=\delta \CR(z,\HH)^{-1+\tfrac{\kappa}8+\delta}|(f_T^{-1})'(z)|^{1-\frac{\gamma_\delta}{\sqrt{\kappa} }+\frac{ 1}{8 }(\gamma_\delta^2+\kappa-2\gamma_\delta \sqrt{\kappa})+\delta} \1_{A_T^R(z)} dy\\ 
&=\delta \CR(z,\HH)^{-1+\tfrac{\kappa}8+\delta} \1_{A_T^R(z)} dz,
\end{align*}
and the last equality is thanks to our choice \footnote{Another way of saying this is that we chose $\gamma_\delta$ so that the power of $|(f_T^{-1})'(z)|$ is equal to 0.} of $\gamma_\delta$.
With this we have proved \eqref{e.equalityapproximatemeasures} for any compact $K\subseteq f_T(N_u\backslash \eta([0,T])\rev{)}$. Since the measures appearing on either side of \eqref{e.equalityapproximatemeasures} do not put mass on $\partial f_T(N_u\backslash \eta([0,T])\rev{)}$, for $u$ small enough, we obtain \eqref{e.equalityapproximatemeasures}.
\vspace{1em}

\noindent\rev{{\bf Proof of \eqref{3}}}
This follows immediately from the definitions of $\mu$ and $\tilde{\mu}$, plus the explicit expressions for the variance of the free field: \eqref{e:varGamma},\eqref{e:limvarGamma}.
\vspace{1em}

\noindent\rev{{\bf Proof of \eqref{4}}}
We consider the measure on $\overline{f_T(N_u)}$ defined by $$\1_{A^T_R(z)}\Leb_\R^{\delta}=\delta \1_{A_T^{R}}(z) (2\Im(z))^{-1+\delta},$$
where $A^R_T(z)=A^R(f_T^{-1}(z))$ is the event that $z$ lies on the right-hand side of $f_T(\eta([T,\infty)))$. Proposition \ref{prop.convergenceLebesgue} implies that this measure converges to $1/2$ \rev{times} the Lebesgue measure on $\R^+\cap \overline{f_T(N_u)}$ as $\delta\to 0$. Proposition \ref{prop.EbLebesgue} shows that the family of measures (in $\delta$) satisfies the energy condition $(\mathcal E_\beta)$ for any $\beta <1/2$, see Remark \ref{r:ebeta}. Thus, using Proposition \ref{p.convergence_liouville} and the fact that $(\kappa-\gamma_\delta^2)/8 = -\delta \kappa/4 + O(\delta^2)$ as $\delta\to 0$, we have that in probability as measures on $\overline{f_T(N_u)}:$
     \begin{align*}\label{e.limitL}
\lim_{\delta\to 0} \tilde\mu_{\Gamma^T}^{\scriptscriptstyle{\gamma/2}}[e^{(\gamma_\delta^2/8 )l(\cdot)}\1_{A_T^R(\cdot)}\delta (2\Im(\cdot))^{-1+(\kappa - \gamma_\delta^2)/8+\delta}] & =\lim_{\delta\to 0} \frac{\delta}{\delta+(\kappa - \gamma_\delta^2)/8}\tilde\mu_{\Gamma^T}^{\scriptscriptstyle{\gamma/2}}[e^{(\gamma_\delta^2/8 )l(\cdot)}\1_{A_T^R(\cdot)}\Leb^{(\kappa - \gamma_\delta^2)/8+\delta}_{\R}] \\
& =\frac{2 }{4-\kappa}\tilde{\mu}_{\Gamma^T}^{\scriptscriptstyle{\gamma/2}}[\mathcal{L}_\R]\big|_{\R^+}.
     \end{align*}
     Note that we can use Proposition \ref{p.convergence_liouville} because by Theorem \ref{T:gffinvariance}, $\Gamma^T$ is absolutely continuous with respect to a GFF \rev{plus a random continuous function} on $f_T(N_u)$.\footnote{\rev{We define the Liouville measures $\mu_{\Gamma^T},\tilde{\mu}_{\Gamma^T}$ associated to $\Gamma^T$ in the same way as for $\Gamma$ (the approximations still converge by absolute continuity). In particular, in the approximation for $\tilde{\mu}_{\Gamma^T}$ we use the multiplicative normalisation $e^{-(\gamma^2/8) \var(\Gamma_\eps(z))}$.}}
\vspace{1em}

\noindent\rev{{\bf Proof of \eqref{5}}} This is exactly \eqref{eq.relmunu}.



\end{proof}

We finish this subsection with a remark regarding the use of both $\Gamma^T$ and $\gamma_\delta$.
\begin{remark}
    Let us note that using the exact change of coordinates given by $\Gamma^T$ is key in step (2) of the proof. One can use Proposition \ref{p.convergence_liouville} for $\Gamma^T$ thanks to the fact that it is absolutely continuous with respect to a free boundary GFF \rev{plus a random continuous function}. We could not do this for any other constant in front of $\log|(f^{-1}_T)'(\cdot)|$.

    A surprising step of the proof is the appearance of $\gamma_\delta \neq \sqrt{\kappa}$. This term appears due to the fact that at an approximate level one does not have the equality \eqref{e.mainthingtoprove} exactly for $\gamma$, instead one has it for $\gamma_\delta$ \eqref{e.equalityapproximatemeasures} satisfying a quadratic equation with respect to $\delta$. This last fact is the one that makes the term $2/(4-\kappa)$ appear in Theorem \ref{T:main}.
\end{remark}

\section{Chaos on critical Minkowski content}\label{s.critical}
In this section, we want to take a limit as $\gamma\nearrow 2,\kappa \nearrow 4$ in \eqref{e.main_result_subcritical} to obtain the result in the critical case. Namely, we prove the following proposition.
\begin{propn}\label{p:maincrit}
Let $\kappa=4$ and $\gamma=2$. Let $m_\eta$ be as  in Corollary \ref{C:confminkone} and $T>0$. Then, on an event of probability one, the measure $\mu_\Gamma^{2}[m_\eta]$ is well defined, and we have for all $0< r < s \le T$,
    \begin{equation}
\label{e:maincrit}\mu_\Gamma^{\rev{1}}[m_\eta]\left(\eta([r,s])\right)  =2\nu_{\Gamma^T}^{\mathrm{crit}}\left(f_T(\eta([r,s]))\cap \R^+\right)  = 2\nu_{\Gamma^T}^{\mathrm{crit}}\left(f_T(\eta([r,s]))\cap \R^-\right).
    \end{equation}
 \end{propn}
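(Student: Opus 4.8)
The strategy is to obtain \eqref{e:maincrit} by taking a limit as $\kappa\nearrow 4$, $\gamma=\sqrt\kappa\nearrow 2$ in the subcritical identity \eqref{e.main_result_subcritical} from Proposition \ref{p:mainsub}. The key point is that the explicit constant $2/(4-\kappa)$ in Theorem \ref{T:main} matches (up to a harmless factor) the normalisation $1/(2(2-\gamma))$ used to define the critical boundary measure in \eqref{e.critical_measure}: indeed $2/(4-\kappa)=2/((2-\gamma)(2+\gamma))$, so dividing \eqref{e.main_result_subcritical} by $2(2-\gamma)$ produces on the right-hand side exactly $\tilde\nu^{\Leb_\R,\gamma}_{\Gamma^T}/(2(2-\gamma))$ times $1/(2+\gamma)\to 1/4$, which converges to $\tilde\nu^{\Leb_\R,\mathrm{crit}}_{\Gamma^T}$ by Definition \ref{d:gmcboundary} — while on the left-hand side we must argue that $\nu^{m_\eta,\gamma}_\Gamma/(2(2-\gamma))\cdot(4-\kappa)/2$, i.e.\ $\nu^{m_\eta,\gamma}_\Gamma/(2+\gamma)$, converges to $\nu^{m_\eta,2}_\Gamma/4$.

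\textbf{Step 1: a single coupling.} First I would fix a sequence $\kappa_n\nearrow 4$ with $\gamma_n=\sqrt{\kappa_n}\nearrow 2$ and build, on one probability space, a free boundary GFF $\Gamma$ together with independent curves $\eta^{(n)}\sim\SLE_{\kappa_n}$ and a limiting $\eta\sim\SLE_4$, such that $\eta^{(n)}\to\eta$ in an appropriate sense (e.g.\ in the half-plane capacity parametrisation, locally uniformly; this can be read off from convergence of the driving Brownian motions $\sqrt{\kappa_n}B\to 2B$ and continuity of the Loewner flow). This gives simultaneous convergence of the centred Loewner maps $f_T^{(n)}\to f_T$ and of the associated fields $\Gamma^{T,(n)}=\Gamma\circ (f_T^{(n)})^{-1}+(\tfrac{2}{\gamma_n}+\tfrac{\gamma_n}{2})\log|((f_T^{(n)})^{-1})'|$, locally uniformly away from the tip and $\eta([T,\infty))$. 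Applying Proposition \ref{p:mainsub} for each $n$ and dividing by $2(2-\gamma_n)$ reduces the problem to two convergence statements: that the (rescaled) boundary chaos measures $\tilde\nu^{\Leb_{\R^\pm},\gamma_n}_{\Gamma^{T,(n)}}/(2(2-\gamma_n))$ converge to $\tilde\nu^{\Leb_{\R^\pm},\mathrm{crit}}_{\Gamma^T}$, and that $\nu^{m_{\eta^{(n)}},\gamma_n}_\Gamma$ converges to $\nu^{m_\eta,2}_\Gamma$.

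\textbf{Step 2: convergence of the bulk and boundary chaos.} For the bulk side I would use a robust criterion analogous to Proposition \ref{p.convergence_liouville}, but now in the critical regime $\gamma_n\to 2=2\sqrt{2\beta}$ with $\beta=d(\kappa)\to 3/2$: the relevant uniform-integrability/Seiberg-type bound needs the energy exponent $\beta=d(\kappa_n)$ to stay strictly above $\gamma_n^2/8=\kappa_n/8$, which holds since $d(\kappa_n)=1+\kappa_n/8>\kappa_n/8$, with a uniform gap. One must also handle the fact that the reference measures $m_{\eta^{(n)}}$ themselves move: here I would invoke the energy bounds of Section \ref{s.Minkowski_content} uniformly in $n$ (the estimate in Lemma \ref{L:onepointmain} and the bounds on the energy are quantitative in $\kappa$ on compacts of $(0,4]$), together with a.s.\ vague convergence $m_{\eta^{(n)}}\to m_\eta$ which follows from the characterisation \eqref{eq:minkchar1}--\eqref{eq:minkchar2} and continuity of $\eta\mapsto m_\eta$ in the coupling of Step 1. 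The resulting (sub)critical GMC convergence then gives $\nu^{m_{\eta^{(n)}},\gamma_n}_\Gamma\to\nu^{m_\eta,2}_\Gamma$. For the boundary side, the analogous convergence $\tilde\nu^{\Leb_{\R^\pm},\gamma_n}_{\Gamma^{T,(n)}}/(2(2-\gamma_n))\to\tilde\nu^{\Leb_{\R^\pm},\mathrm{crit}}_{\Gamma^T}$ uses absolute continuity of $\Gamma^{T,(n)}$ with respect to a free boundary GFF (Theorem \ref{T:gffinvariance}), the Radon--Nikodym derivatives converging because $f_T^{(n)}\to f_T$, together with the definition \eqref{e.critical_measure} of the critical measure as the $\gamma\uparrow 2$ limit of $\tilde\nu^{\Leb_\R,\gamma}/(2(2-\gamma))$ and the convergence results of \cite{APS2}; one needs this convergence to be locally uniform in the Radon--Nikodym weight, which again follows from the uniform control in the coupling.

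\textbf{Main obstacle.} The delicate point is precisely \emph{simultaneously} passing to the limit in $\kappa$ \emph{and} in $\gamma$ while the base measure $m_{\eta}$ is itself $\kappa$-dependent and fractal — i.e.\ making Step 2 uniform. Subcritical GMC convergence results are usually stated for fixed $\gamma$ and a fixed (or at least $\Gamma$-independent) base measure; here both move and we approach the critical exponent, so the uniform-integrability estimates must be shown to degrade in a controlled way as $n\to\infty$. I expect this to require revisiting the proof of Lemma \ref{l.uniform_convergence_liouville} with the pair $(\mu_n,\gamma_n)=(m_{\eta^{(n)}},\gamma_n)$ replaced throughout and checking that every constant depends on $\kappa_n$ only through the uniform gap $d(\kappa_n)-\kappa_n/8=1$, so that the $\sup_n$ of the relevant error terms still vanishes. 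Once uniformity is in hand, assembling the limit from \eqref{e.main_result_subcritical} and \eqref{e.critical_measure} is routine, and the equality of the left and right critical boundary lengths drops out as in the subcritical case — which, as emphasised in the introduction, is the point of the argument and recovers (and extends to $\kappa=4$) the identification in \cite{HP}.
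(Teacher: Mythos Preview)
Your overall strategy (take $\kappa_n\nearrow 4$, apply Proposition~\ref{p:mainsub} at level $n$, and pass to the limit, matching $2/(4-\kappa_n)$ with the critical normalisation) is exactly the paper's. But Step~2 contains a genuine gap that the paper handles by a different mechanism.

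The assertion that $m_{\eta^{(n)}}\to m_\eta$ a.s.\ ``follows from the characterisation \eqref{eq:minkchar1}--\eqref{eq:minkchar2} and continuity of $\eta\mapsto m_\eta$'' is not justified: the characterisation itself depends on $\kappa$ (through $G$ and $d$), and establishing continuity across $\kappa$ amounts to proving that the limit defining Minkowski content is uniform in $\kappa$ --- precisely what the paper says it \emph{avoids}. The paper's workaround is to abandon a.s.\ convergence altogether and argue with subsequential limits \emph{in law} of the full tuple $(\hat\Gamma_n,\eta_n,m_n,\nu_n,\nu_n(A_n),\tilde\nu_n,\tilde\nu_n(I_n))$. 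For any limit point $(\hat\Gamma,\eta,m,\nu,X,\tilde\nu,Y)$, Lemma~\ref{l.caracterization_eta,m} only yields the weaker statement $\mathbb E(m(\cdot)\mid\eta)=m_\eta$, hence $\mathbb E(X\mid\eta,\hat\Gamma)=\nu_{\hat\Gamma}^{m_\eta,2}(\eta([S,T]))$. The key observation you are missing is that the subcritical identity forces $X=Y$, while $Y=\tilde\nu_{\hat\Gamma^T}^{\Leb_{\R_+},\mathrm{crit}}(I)$ \emph{is} $(\eta,\hat\Gamma)$-measurable; taking $\mathbb E(\cdot\mid\eta,\hat\Gamma)$ of $X=Y$ then gives \eqref{e:maincrit} directly, without ever knowing whether $m=m_\eta$.

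A smaller point on the boundary side: rather than tracking ``Radon--Nikodym derivatives converging'', the paper exploits Theorem~\ref{T:gffinvariance} to note that $\hat\Gamma_n^T-C_n-(2/\gamma_n)\log|\cdot|$ has, for every $n$, the law of a \emph{fixed} free boundary GFF. Coupling these to be identically equal lets one apply \eqref{e.critical_measure} once to that fixed field and then multiply back the convergent prefactors $e^{\gamma_n C_n/2}|\cdot|$; this sidesteps the uniformity-in-$n$ issue for critical GMC that your sketch leaves open. Finally, since the paper uses different couplings for different marginals (the Kemppainen--Smirnov coupling for curve convergence; the common-Brownian-motion coupling for $I_n$), building everything on one probability space as in your Step~1 is not actually achieved --- this is another reason the argument proceeds via tightness and identification of subsequential limits rather than a.s.\ convergence.
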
 
This proposition is the final step to prove Theorem \ref{T:main}.
 \begin{proof}[Proof of Theorem \ref{T:main}]
{This follows directly from Propositions \ref{p:mainsub} and \ref{p:maincrit}.}
\end{proof}

 The proof of Proposition \ref{p:maincrit} can be grasped intuitively by considering the limits as $\kappa\nearrow 4$ and $\gamma\nearrow 2$ in Proposition \ref{p:mainsub}. However, three crucial challenges arise, making the proof more technical than anticipated.
 \begin{enumerate}[(a)]

      \item It is necessary to find a coupling of $(\eta_\kappa)_{\kappa\in (0,4]}$ with $\eta_\kappa$ having the law of SLE$_{\kappa}$ for each $\kappa$, such that almost surely $\lim_{\kappa\nearrow 4}\eta_\kappa = \eta$ and $\lim_{\kappa\nearrow 4}m_{\eta_\kappa}=m_\eta$.
       \item One needs to show that as $\gamma\nearrow 2$, $2((4-\gamma^2 )(1-
      \gamma^2/8))^{-1}\nu_{\Gamma^T}^{\gamma}|_{\R^+}$ is converging towards $2\nu_{\Gamma^T}^{\mathrm{crit}}|_{\R^+}$. This is not straightforward from \eqref{e.critical_measure}, since $\Gamma^T$ depends on $\gamma$ and $\eta$.
    \item In the proof one has measures $\mathfrak{m}_n$ converging to $\mathfrak{m}$ for the vague topology and sets $A_n$ converging to $A$ for the Hausdorff topology. This does not guarantee that $\mathfrak{m}_n(A_n)$ converges towards $\mathfrak{m}(A)$.    
 \end{enumerate}
To address (a) and (b), we opt to weaken our convergences by considering the convergence in law of a comprehensive tuple and identifying accumulation points. For (b), a simpler coupling is chosen, maintaining $\Gamma_{T}$ nearly constant. Addressing (a) proves more intricate. One approach would be to show that the limit (in probability) defining $m_{\eta}(D)$ is uniform in $\kappa$, but this necessitates a detailed examination of constants in the proof of \cite{LawlerRezaei}, which we avoid. Instead, in Lemma \ref{l.caracterization_eta,m}, we consider any accumulation point $(\eta, m_{\eta})$ of pairs $(\eta_n,m_{\eta_n})$ (made up of an $\SLE_{\kappa_n}$ curve together with its Minkowski content measure). While we cannot directly prove that $m_\eta$ is the Minkowski content measure of $\eta$, we establish that the measure $m_\eta$, conditional on $\eta$, corresponds to the Minkowski content measure. This is sufficient to address (a). Finally, (c) is resolved by employing the technical Lemma \ref{l:convergingmeasuresandsets}. To utilise this result, we demonstrate that the occupation measure and its Gaussian multiplicative chaos don't assign mass to points close to $\eta[r,s]$, as shown in Lemma \ref{c.uniformsepSLE}. 

With these steps outlined, we proceed to present the proof of Proposition \ref{p:maincrit}.
\begin{proof}[Proof of Proposition \ref{p:maincrit}] In this proof it is simpler to work with $\hat \Gamma^\gamma(\cdot)=\Gamma(\cdot) + (2/\gamma)\log|(\cdot)|$ instead of $\Gamma(\cdot)$, in order to use Theorem \ref{T:gffinvariance} more directly. Note that the law of $\hat \Gamma^\gamma$ restricted to \rev{any compact of} $\overline{\HH}\backslash B(0,\eps)$ \rev{for any $\eps>0$} is absolutely continuous with respect to the law of a free boundary GFF restricted \rev{to the same set.} This implies that Proposition \ref{p:mainsub} (with $\kappa=\gamma^2$) holds for $\hat \Gamma^\gamma$ instead of $\Gamma$. It also means that proving Proposition \ref{p:maincrit} with $\hat \Gamma^{\gamma=2}$ is enough to deduce the result for $\Gamma$.

Also notice that it is sufficient to prove the first equality of \eqref{e:maincrit} with $r=S\in (0,T)$ arbitrary, and $s=T$. Indeed, the second equality (for these fixed values of $r,s$) follows identically, and since $S\in (0,T)$ was arbitrary, we may then deduce the result restricted to all rational $0< r<s\le T$. The full result finally follows from this since none of the measures in question have atoms.

So, we fix $0<S<T$. We take a sequence $\kappa_n\nearrow 4$, $\gamma_n:=\sqrt{\kappa_n}$, and study  convergence of the tuple
\begin{equation}\label{e.massivetuple}
(\hat{\Gamma}_n, \eta_n, \hat{\Gamma}_n^T, m_n, I_n, \rev{\mu}_n, \rev{\mu}_n(A_n),{\nu}_n, {\nu}_n(I_n))
\end{equation}
where $\hat \Gamma_n$ is \rev{equal in distribution to a free boundary GFF $\Gamma$ plus the function $(2/\gamma_n)\log(|\cdot|)$} , $\eta_n$ is an independent $\SLE_{\kappa_n}$ parametrised by half-plane capacity with associated centred Loewner map $f_{T,n}$, $I_n=f_{T,n}(\eta_n([S,T]))\cap \R^+$, $A_n=\eta_n([S,T])$, 
$\hat\Gamma^T_n:=\hat \Gamma\rev{_n}\circ f_{T,n}^{-1}+(2/\gamma_n+\gamma_n/2)\log|(f_{T,n}^{-1})'|$, and 
\[ m_n=m_{\eta_n}, \; \rev{\mu_n=\mu_{\hat \Gamma_n}^{\scriptscriptstyle{\gamma_n/2}}[m_n]} \text{ and }\rev{\nu_n}=\rev{\frac{1}{(4-\kappa_n)(1-\tfrac{\kappa_n}{8})}}\rev{\nu}_{\hat\Gamma^T_n}^{\gamma_n}|_{\R^+}.\]

We will always consider convergence in the following topologies: the topology of uniform convergence on compact \rev{subsets of $[0,\infty)$} for the curves $\eta_n$; $H^{-1}_{\mathrm{loc}}(\HH)$ convergence for the fields $\hat{\Gamma}_n$ and $\hat\Gamma^T_n$, the topology of vague convergence for measures on $\overline{\HH}$ for $m_n$ and $\mu_n$, the topology of vague convergence for measures on $\R$ for ${\nu}_n$, Hausdorff convergence for the compacts $I_n, A_n$, and usual Euclidean convergence on $\R$ for $\mu_n(A_n)$ and $\nu_n(I_n)$.

The result is implied by the following marginal convergence statements. 
\begin{enumerate}
    \item\label{i:ggeta} As $n\to \infty$, 
    \[
    (\hat{\Gamma}_n, \eta_n, \hat{\Gamma}_n^T)\to (\hat{\Gamma}, \eta, \hat{\Gamma}^T)
    \] in law, where $\hat{\Gamma}$ has the law of a free boundary GFF plus $\log(|\cdot|)$, $\eta$ is an independent $\SLE_4$ with centred Loewner map $f_T$ at time $T$, and $\hat\Gamma^T=\hat\Gamma\circ f_T^{-1}+2\log|(f_T^{-1})'|$ almost surely.
    \item\label{i:etaI} As $n\to \infty$, 
    \[
    (\eta_n, I_n) \to (\eta, I)
    \]
 in law, where $\eta$ is as in (\ref{i:ggeta}), and $I=f_T(\eta([S,T])\rev{)}\cap \R^+$ almost surely.
    \item\label{i:ghat} As $n\to \infty$, 
    \[ 
    (\hat{\Gamma}_n^T,{\nu}_n)\to (\hat{\Gamma}^T, {\nu})
    \]
 in law, where the law of $\hat{\Gamma}^T$ is as in (\ref{i:ggeta}) and $\rev{\nu}=\nu^{\mathrm{crit}}_{\hat{\Gamma}^T}$ almost surely.
    \item\label{i:nu} The law of $(\hat\Gamma_n,\eta_n, m_n, \mu_n, \mu_n(A_n))$ is tight, and for any subsequential limit $(\hat\Gamma,\eta,m,\mu,X)$, $(\hat\Gamma,\eta)$ has same the joint law as in (\ref{i:ggeta}) and  $\mathbb{E}(X\mid \eta,\hat\Gamma)=\mu^{1}_{\hat\Gamma}[m_\eta]\left(\eta([S,T])\right)$
    almost surely. Note that we will not show that $X$ belongs to $L^1$; however its conditional expectation is well-defined, since $X$ is almost surely positive.
\end{enumerate}
Indeed, if (\ref{i:ggeta})-(\ref{i:nu}) hold, then the tuple \eqref{e.massivetuple} is tight, and by Proposition \ref{p:mainsub} applied with $\hat{\Gamma}_n$ in place of $\Gamma$, we have that 
\[ 
\mu_n(A_n)=\rev{2}{\nu}_n(I_n)
\]
almost surely for every $n$, so for any subsequential limit
\[
(\hat{\Gamma},\eta,\hat{\Gamma}^T, m,I,\mu,X,\nu,Y)
\]
of \eqref{e.massivetuple}, we have $X=\rev{2Y}$ almost surely. (\ref{i:ggeta})-(\ref{i:nu}) imply that for this limiting tuple, $\hat{\Gamma}$ has the law of a free boundary GFF plus $\log(|\cdot|)$, $\eta$ is an independent $\SLE_4$ with centred Loewner map $f_T$ at time $T$, 
\[
\hat\Gamma^T=\hat\Gamma\circ f_T^{-1}+2\log|(f_T^{-1})'|, \quad I=f_T(\eta([S,T]))\cap \R^+, \text{ and } \nu=\nu_{\hat\Gamma^T}^{\mathrm{crit}}|_{\R^+}
\] 
almost surely, and 
\begin{equation}\label{e.condX}
    \mathbb{E}(X\mid \eta,\hat\Gamma)=\mu^{1}_{\hat\Gamma}[m_\eta]\left(\eta([S,T])\right) \text{ almost surely.}
\end{equation} 
Since $I$ is an interval, the convergence $I_n\to I$ and $\nu_n\to \nu$ implies that $Y={\nu}(I)$ almost surely (this can be seen by working on a probability space where the convergence holds almost surely, \rev{and using that $\nu_n,\nu$ are almost surely atomless}). Hence, 
\[
X=\rev{2}Y=\rev{2}\nu_{\hat\Gamma^T}^{\mathrm{crit}}\left(f_T(\eta([S,T]))\cap \R^+\right) \text{ almost surely.}
\]
Taking conditional expectations of both sides given $\hat{\Gamma},\eta$ and using \eqref{e.condX} plus the fact that the right-hand side is $(\hat{\Gamma},\eta)$-measurable, we obtain that 
\[
\mu^{1}_{\hat\Gamma}[m_\eta]\left(\eta([S,T])\right)=\nu_{\hat\Gamma^T}^{\mathrm{crit}}\left(f_T(\eta([S,T]))\cap \R^+\right) \text{ almost surely.}
\]
This is exactly what we needed, so it remains to prove (\ref{i:ggeta}),(\ref{i:etaI}),(\ref{i:ghat}),(\ref{i:nu}). We do this in turn below.
\end{proof}

\begin{proof}[Proof of (\ref{i:ggeta})]
We define a coupling of $(\hat\Gamma_n,(\eta_n),\eta)$ where: $\Gamma$ is a free boundary GFF independent of all the curves and $\hat\Gamma_n=\Gamma+(2/\gamma_n)\log|\cdot|$ for each $n$; $\eta_n\to \eta$ almost surely for the topology of uniform convergence on compact \rev{subsets of $[0,\infty)$} as $n
\to \infty$. This is possible by \cite[Theorem 1.10] {KS}. The convergence of the curves implies that
\[
f_{T,n}^{-1}, (f_{T,n}^{-1})' \to f_T^{-1}, (f_T^{-1})'
\]  
uniformly almost surely on any compact set contained in $\HH$. \rev{It is clear that $\hat\Gamma_n\to \hat{\Gamma}=\Gamma+\log|\cdot|$ as $n\to \infty$.} Together with the convergence in the displayed equation above, we see that 
\[
\hat\Gamma_n^T=\hat\Gamma_n\circ f_{T,n}^{-1}+(2/\gamma_n+\gamma_n/2)\log|(f_{T,n}^{-1})'|\to \hat\Gamma^T=\hat\Gamma \circ f_T^{-1}+2\log|(f_T^{-1})'|\] almost surely in $H^{-1}_{\mathrm{loc}}(\HH)$. 
\end{proof}

\begin{proof}[Proof of (\ref{i:etaI})]
We define a coupling of $(\eta_n)_n$ by using $\sqrt{\kappa_n}W$ for the driving function of each $\eta_n$, with $W$ a single Brownian motion. Then $\eta_n\to \eta$ almost surely in the Carath\'{e}odory sense, where $\eta$ is an $\SLE_4$ curve driven by $2W$. Moreover, $(f_{T,n}(\eta_n(S)))_n$ are a collection of Bessel processes evaluated at time $T-S$, all driven by the same Brownian motion $W$ and with converging dimension. This implies that $f_{T,n}(\eta_n(S))\to f_T(\eta(S))$, where $f_T$ is the \rev{centred Loewner map associated to} $\eta$ \rev{at time $T$}. Since $f_{\rev{T,n}}(\eta_n(T))=0$ for all $n$ by definition, this implies that $I_n\to f_{T}(\eta([S,T]))$ almost surely. In particular, $I_n$ is tight in law, and $\eta_n$ is tight in law due to the convergence in (\ref{i:ggeta}). As uniform convergence on compacts of time implies convergence in the Carath\'{e}odory sense, the above discussion yields (\ref{i:etaI}).
\end{proof}

\begin{proof}[Proof of (\ref{i:ghat})] 
For each $n$, let $C_n$ be the average of $\hat{\Gamma}_n^T-(2/\gamma_n)\log(|\cdot|)$ on the upper unit semicircle. The convergence of $\hat{\Gamma}_n^T$ from \eqref{i:ggeta} implies that 
\begin{equation}\label{e.hgtg}(\hat{\Gamma}_n^T,C_n,\hat{\Gamma}_n^T-C_n-(2/\gamma_n)\log(|\cdot|))
\to 
(\hat\Gamma^T,C,\hat\Gamma^T-C-\log(|\cdot|))
\end{equation} in law as $n\to \infty$, where $C$ is the average of $\hat{\Gamma}^T-\log(|\cdot|)$ on the upper unit semicircle. Recall that $\hat{\Gamma}_n^T-C_n-(2/\gamma_n)\log(|\cdot|)$ has the law of a free boundary GFF as in Definition \ref{d:freeGFF} for each $n$, due to Theorem \ref{T:gffinvariance}. Therefore, by defining a coupling where they are all identically equal, we see that 
\[
(\hat{\Gamma}_n^T-C_n-(2/\gamma_n)\log(|\cdot|), \frac{1}{(4-\gamma_n^2)(1-\gamma_n^2/8)}\nu_{\hat{\Gamma}_n^T-C_n-(2/\gamma_n)\log(|\cdot|)}^{\gamma_n}|_{\R^+})\to (\tilde{\Gamma},\nu^{\mathrm{crit}}_{\tilde{\Gamma}}|_{\R^+})
\]
in law as $n\to \infty$, where $\tilde{\Gamma}$ has the law of a free boundary GFF. For this last point, we are also using the definition of the critical measure and the fact that $(2-\gamma_n)/\left((4-\gamma_n^2)(1-\gamma_n/8)\right)
\to 1/2$ as $n\to \infty$. Combining with 
\eqref{e.hgtg}, we obtain that 
\begin{multline*}
\left(\hat{\Gamma}_n^T,C_n, \hat{\Gamma}_n^T-C_n-(2/\gamma_n)\log(|\cdot|), \frac{1}{(4-\gamma_n^2)(1-\gamma_n^2/8)}\nu_{\hat{\Gamma}_n^T-C_n-(2/\gamma_n)\log(|\cdot|)}^{\gamma_n}|_{\R^+}\right) \\
\to \left(\hat\Gamma,C,\hat\Gamma^T-C-\log(|\cdot|), \nu^{\mathrm{crit}}_{\hat\Gamma^T-C-\log(|\cdot|)}|_{\R^+}\right)
\end{multline*}
in law as $n\to \infty$, where $C$ is the average of $\hat{\Gamma}^T-\log(|\cdot|)$ on the upper unit semicircle.  Finally, since 
\[
\nu_{\hat\Gamma^T_n}^{\gamma_n}(dx)=xe^{\tfrac{\gamma_nC_n}2}\,
\nu_{\hat{\Gamma}_n^T-C_n-(2/\gamma_n)\log(|\cdot|)}^{\gamma_n}(dx)
\]
for each $n$, we deduce from the convergence above that also 
\[
(\hat{\Gamma}_n^T, \frac{1}{(4-\gamma_n^2)(1-\gamma_n^2/8)}\nu_{\hat{\Gamma}_n^T}^{\gamma_n}|_{\R^+})\to (\hat\Gamma^T, \nu^{\mathrm{crit}}_{\hat\Gamma^T}|_{\R^+})
\]
in law, as desired.
\end{proof}

The proof of (\ref{i:nu}) is the most involved, and we will need a couple of preparatory lemmas. For each $n$, we let $G_n$ be the $\SLE_{\kappa_n}$ Green's function, as in Definition \ref{d:GreenSLE}. We let $G$ be the $\SLE_4$ Green's function.

\begin{lemma}
Let $0\le s\le t< \infty$, $D\subset \HH$ be a bounded simply connected domain,   and $a>0$. Then denoting $m_n:=m_{\eta_n}$ and $A_n:=\eta_n([s,t])\cap D$, we have 
\[\sup_{n>0}\P(m_n((A_n)_\eps)-m_n(A_n)\ge a)\to 0\]
as $\eps\to 0$, \rev{where for a closed set $K$, $K_\eps$ denotes the set $\{z: d(z,K)<\eps\}$.}
\label{c.uniformsepSLE}
\end{lemma}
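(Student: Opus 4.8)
The plan is to bound the expectation $\mathbb{E}[m_n((A_n)_\eps)-m_n(A_n)]$ uniformly in $n$ and show it tends to $0$ as $\eps\to 0$; then Markov's inequality gives the claim. Using that $m_n = m_{\eta_n}^i$ satisfies the characterisation \eqref{eq:minkchar1}, \eqref{eq:minkchar2} (with the factor $1/2$ replaced by $2K_{\kappa_n}$ when we work with the normalisation $\hat m$, but it is cleanest to work directly with the one-sided content $m_{\eta_n}$ and its $u$-approximations), I would first write $m_n((A_n)_\eps)-m_n(A_n) = m_n((A_n)_\eps \setminus A_n)$ since $A_n \subseteq (A_n)_\eps$. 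The set $(A_n)_\eps\setminus A_n$ is contained in the $\eps$-neighbourhood of $\eta_n([s,t])$, intersected with a fixed bounded region (we may enlarge $D$ to a fixed compact $K$ once and for all, since $A_n\subset D$), minus $\eta_n([s,t])$ itself. So it suffices to bound $\mathbb{E}[m_n(\{z\in K: 0<d(z,\eta_n([s,t]))<\eps\})]$ uniformly in $n$, and show this $\to 0$ as $\eps\to 0$.

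The key input is the one-point estimate: by the sharp asymptotics behind $G_n$ (Lemma \ref{L:onepointmain}, or more simply the a priori upper bound from Lemma \ref{L:upperbounds}\eqref{E:tau3} used in the Appendix), there is a constant $c$, uniform in $\kappa_n\in[\kappa_0,4]$ for any $\kappa_0>0$, such that $\mathbb{P}(\CR(z,\HH\setminus\eta_n)<u)\le c\, u^{2-d(\kappa_n)} G_n(z)$ for all $z\in K$, $u<1$ — and since $d(\kappa_n)\to 3/2$ with $G_n$ uniformly bounded on $K$, we get $\mathbb{E}[m_n^{i,u}(B)] = u^{-(2-d_n)}\mathbb{P}(\CR(z,\HH\setminus\eta_n)<u, A^i(z))\le c\, G_n(z)$, integrable over $K$ with a bound uniform in $n$ and $u$. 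Taking $u\to0$ and using Fatou (together with Theorem \ref{T:confminkone}, which gives a.s. convergence $m_n^{i,u}\to m_n$) yields $\mathbb{E}[m_n(B)]\le c\int_B G_n(z)\,dz$ for any Borel $B\subset K$, with the constant $c$ uniform in $n$. Now I apply this with $B = B_\eps^n := \{z\in K: 0<d(z,\eta_n([s,t]))<\eps\}$. We have $\mathbb{E}[m_n(B_\eps^n)]\le c\int_{B_\eps^n} G_n(z)\,dz \le c\,\|G_n\|_{L^\infty(K)}\,\mathrm{Leb}(B_\eps^n)$. Since $G_n$ is uniformly bounded on $K$ over $n$, and $\mathrm{Leb}(B_\eps^n)\to 0$ as $\eps\to 0$ — here I use that $\eta_n([s,t])$ has zero Lebesgue measure, being a curve of Hausdorff dimension $d_n<2$; one can even get a rate using the a priori two-point bound — we would need uniformity of the decay of $\mathrm{Leb}(B_\eps^n)$ in $n$.

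The main obstacle is precisely this last uniformity: a priori $\mathrm{Leb}(\{z\in K: d(z,\eta_n)<\eps\})$ could decay arbitrarily slowly as a function of $n$. I would handle it by instead bounding $\mathbb{E}[\mathrm{Leb}(B_\eps^n)] = \int_K \mathbb{P}(0<d(z,\eta_n([s,t]))<\eps)\,dz$. Since $\{d(z,\eta_n)<\eps\}\subseteq\{\CR(z,\HH\setminus\eta_n)<c'\eps\}$ for a universal $c'$ (by the Koebe quarter theorem), the one-point estimate gives $\mathbb{P}(d(z,\eta_n)<\eps)\le c\,(c'\eps)^{2-d_n}G_n(z)$, so $\mathbb{E}[\mathrm{Leb}(B_\eps^n)]\le c\,(c'\eps)^{2-d_n}\int_K G_n(z)\,dz$. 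Because $2-d_n = 7/8 - \kappa_n/8$ is bounded below by a positive constant uniformly over $\kappa_n\le 4$ near $4$ (namely $2-d_n\ge 3/8 - o(1) > 1/4$), and $\int_K G_n$ is uniformly bounded, we get $\sup_n\mathbb{E}[\mathrm{Leb}(B_\eps^n)]\le C\eps^{1/4}\to 0$. Combining with the Markov bound $\mathbb{P}(m_n(B_\eps^n)\ge a)\le a^{-1}\mathbb{E}[m_n(B_\eps^n)]$, and conditioning on $\eta_n$ to apply $\mathbb{E}[m_n(B_\eps^n)\mid\eta_n]\le c\|G_n\|_{L^\infty(K)}\mathrm{Leb}(B_\eps^n)$, gives $\sup_n\mathbb{P}(m_n(B_\eps^n)\ge a)\le (c\|G\|_\infty/a)\sup_n\mathbb{E}[\mathrm{Leb}(B_\eps^n)]\to 0$, completing the proof. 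One technical point to verify carefully is that the constants in the a priori estimates from \cite{LawlerRezaei} can indeed be taken uniform over a neighbourhood of $\kappa=4$; this follows by inspecting that all bounds there depend on $\kappa$ only through quantities ($d$, $8/\kappa$, the mixing rate $\alpha$) that are continuous in $\kappa$ on $(0,4]$.
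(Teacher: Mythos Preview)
There is a genuine gap. Your key step
\[
\mathbb{E}\bigl[m_n(B_\eps^n)\mid\eta_n\bigr]\le c\,\|G_n\|_{L^\infty(K)}\,\mathrm{Leb}(B_\eps^n)
\]
is false. The measure $m_n=m_{\eta_n}$ is a deterministic functional of $\eta_n$, so the left-hand side is simply $m_n(B_\eps^n)$ itself. And $m_n$ is singular with respect to Lebesgue measure (it is supported on the curve $\eta_n$, of Hausdorff dimension $d_n<2$), so there is no pathwise bound of the form $m_n(B)\le c\,\mathrm{Leb}(B)$. The earlier inequality $\mathbb{E}[m_n(B)]\le c\int_B G_n(z)\,dz$ is valid only for \emph{deterministic} $B$; your set $B_\eps^n=\{z\in K:0<d(z,\eta_n([s,t]))<\eps\}$ depends on $\eta_n$, so you cannot plug it in. Consequently the chain ``bound $\mathbb{E}[\mathrm{Leb}(B_\eps^n)]$ via the one-point estimate, then transfer to $\mathbb{E}[m_n(B_\eps^n)]$'' never closes: controlling the Lebesgue measure of an $\eps$-tube says nothing about the $m_n$-mass it carries.

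The paper's proof repairs exactly this point by conditioning on $\eta_n([0,t])$ rather than on all of $\eta_n$. Since $A_n=\eta_n([s,t])\cap D$ is $\eta_n([0,t])$-measurable, the set $(A_n)_\eps\setminus A_n$ becomes conditionally deterministic, while the portion of $m_n$ living on $\eta_n([t,\infty))$ is still genuinely random, with conditional expectation given by the Green's function formula \eqref{eq:minkchar2}. After pushing forward by $f_{t,n}$, the image of $(A_n)_\eps\setminus A_n$ is squeezed into a thin strip $[-K,K]\times i[0,\delta]$ near $\R$ (up to events of small probability, uniformly in $n$), and $\sup_n\int_{[-K,K]\times i[0,\delta]} G_n\to 0$ as $\delta\to 0$. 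The contribution on $\eta_n([0,s])$ is handled symmetrically via reversibility. This use of the Markov property to make the random set measurable while keeping part of $m_n$ random is the missing idea in your argument.
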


\begin{proof}
We fix $a>0$. For any $K,\delta>0$, we write $W_{t,n,\eps,K}$ for the event that $f_{t,n}((A_n)_\eps\setminus A_n)\,\rev{\not\subset} \,[-K,K]\times i\R$ and $H_{t,n,\eps,\delta}$ for the event that $f_{t,n}((A_n)_\eps\setminus A_n)\,\rev{\not\subset}\,\R \times i[0,\delta]$. Then we have 
\begin{align*}
& \sup_{n>0}\P\left(m_n\left((A_n)_\eps\cap \eta_n([t,\infty))\right)\ge a\right) \\
& \le \sup_{n>0} \P(W_{t,n,\eps,K}\cup H_{t,n,\eps,\delta}) + \sup_{n>0} \frac{\E{m_n\left((A_n)_\eps\cap \eta_n([t,\infty))\right)\1_{(W_{t,n,\eps,K}\cup H_{t,n,\eps,\delta})^c}}}{a}\\
& \le \sup_{n>0,\eps<1} \P(W_{t,n,\eps,K})+\sup_{n>0} \P(H_{t,n,\eps,\delta})+\sup_{n>0}\frac{\int_{[-K,K]\times i[0,\delta]} G_n(z)}{a}
\end{align*}
The first term on the right converges to $0$ as $K\to \infty$. For any fixed $K$, the final term on the right converges to $0$ as $\delta\to 0$. Finally, for any fixed $\delta$ the second term converges to $0$ as $\eps\to 0$.  Thus  $\sup_{n>0}\P(m_n((A_n)_\eps\cap \eta_n([t,\infty)))-m_n(A_n)\ge a)\to 0$ as $\eps \to 0$. To see that $\sup_{n>0}\P(m_n((A_n)_\eps\cap \eta_n([0,s]))-m_n(A_n)\ge a)$ also converges to $0$, we use reversibility of SLE, \cite{reversibility}.
\end{proof}

\begin{lemma}\label{l.caracterization_eta,m}
Suppose that $(\eta_n,m_n)$ has a limit in law $(\eta,m)$ with respect to the topology of uniform convergence on compacts of time in the first coordinate, and the vague topology for measures on $\overline{\HH}$ in the second coordinate.
Then the measure $D\mapsto \mathbb{E}(m(D)|\eta)$ is almost surely equal to $m_\eta$.
\label{l.etam}
\end{lemma}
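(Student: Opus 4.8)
The plan is to use the Markov-property characterization of the one-sided conformal Minkowski content from Theorem \ref{T:confminkone}, namely \eqref{eq:minkchar1}--\eqref{eq:minkchar2}, and to check that the measure $m^\eta(D):=\mathbb{E}(m(D)\,|\,\eta)$ satisfies exactly these two properties, so that by the uniqueness of the characterization (established in \cite[Section 3]{LawlerSheffield}, as used in the proof of Theorem \ref{T:confminkone}) it must coincide with $m_\eta$. First I would move to a probability space where the convergence $(\eta_n,m_n)\to(\eta,m)$ holds almost surely; this is harmless since the statement is about laws. By Theorem \ref{T:confminkone} applied to each $\eta_n$ (an $\SLE_{\kappa_n}$ with $\kappa_n\to 4$), and taking $D\subset\HH$ bounded with $d(D,\R)>0$ and $t>0$, we know
\begin{align*}
\mathbb{E}\big(m_n(D)\,\big|\,\eta_n([0,t])\big)=m_n(D\cap\eta_n([0,t]))+\tfrac12\int_{D\setminus\eta_n([0,t])}|g_{t,n}'(z)|^{2-d_n}G_n(g_{t,n}(z)-W_{t,n})\,dz,
\end{align*}
where $d_n=1+\kappa_n/8$, $g_{t,n}$ is the Loewner map for $\eta_n$, and $G_n$ the $\SLE_{\kappa_n}$ Green's function. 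The strategy is to pass to the limit $n\to\infty$ on both sides and then take conditional expectation given $\eta$.

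The right-hand side is the easier part: since $\eta_n\to\eta$ uniformly on compacts, the centred/uncentred Loewner maps converge, $g_{t,n}'\to g_t'$ and $W_{t,n}\to W_t$, uniformly on compact subsets of $\HH\setminus\eta([0,t])$ (cf.\ the Carath\'eodory-convergence arguments already invoked in the proof of \eqref{i:ggeta} and \eqref{i:etaI} via \cite{KS}), and $d_n\to d$, $G_n\to G$ locally uniformly since the Green's function \ref{d:GreenSLE} is explicit and continuous in $\kappa$. Because $G_n$ is uniformly bounded on $D$ (using $d(D,\R)>0$, so $\sin(\arg z)$ and $\Im z$ are bounded below there), dominated convergence gives that the integral term converges to $\tfrac12\int_{D\setminus\eta([0,t])}|g_t'(z)|^{2-d}G(g_t(z)-W_t)\,dz$. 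For the term $m_n(D\cap\eta_n([0,t]))$, I would use the vague convergence $m_n\to m$ together with Lemma \ref{c.uniformsepSLE} (uniform-in-$n$ control on the mass of $m_n$ near $\eta_n([0,t])$, i.e.\ that $m_n((A_n)_\eps)-m_n(A_n)\to 0$ in probability uniformly) and the a.s.\ continuity/no-atoms statement \eqref{eq:minkchar1} for the limit, to deduce $m_n(D\cap\eta_n([0,t]))\to m(D\cap\eta([0,t]))$ in probability (after possibly a further perturbation of $D$ to ensure $m(\partial D)=0$, which holds for all but countably many dilates of $D$). For the left-hand side, one needs to check $\mathbb{E}(m_n(D)\,|\,\eta_n([0,t]))\to \mathbb{E}(m(D)\,|\,\eta([0,t]))$; I would get this from the convergence in law of the triple $(\eta_n,m_n(D),m_n(D\cap\eta_n([0,t])))$ together with the uniform integrability of $m_n(D)$ — which follows because $\mathbb{E}(m_n(D))=\int_D G_n(z)\,dz$ is bounded uniformly in $n$, and in fact the conditional-expectation identity above plus the $L^1$ bound gives enough uniform integrability to pass conditional expectations to the limit (e.g.\ via the fact that conditional expectations w.r.t.\ a converging filtration of a uniformly integrable converging sequence converge).

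Passing to the limit, I obtain that for all such $D$ and all $t>0$,
\begin{align*}
\mathbb{E}\big(m(D)\,\big|\,\eta([0,t])\big)=\mathbb{E}\big(m(D\cap\eta([0,t]))\,\big|\,\eta([0,t])\big)+\tfrac12\int_{D\setminus\eta([0,t])}|g_t'(z)|^{2-d}G(g_t(z)-W_t)\,dz
\end{align*}
almost surely; taking a further conditional expectation given $\eta$ (note the last integral is $\eta$-measurable, and $m(D\cap\eta([0,t]))$ is $\eta([0,t])$-measurable after limiting since the approximations are), and writing $m^\eta(\cdot):=\mathbb{E}(m(\cdot)\,|\,\eta)$, I get that $m^\eta$ satisfies \eqref{eq:minkchar2}. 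Property \eqref{eq:minkchar1} — that $t\mapsto m^\eta(\eta([0,t]))$ is $\eta([0,t])$-measurable, continuous and increasing — follows from the analogous properties of $m_n$ (\eqref{eq:minkchar1} for each $n$) passing to the limit, again using Lemma \ref{c.uniformsepSLE} to rule out mass escaping to the boundary of the increments and the monotonicity being preserved under the vague limit. Then the uniqueness part of the characterization in \cite{LawlerSheffield} (exactly as cited in the proof of Theorem \ref{T:confminkone}) forces $m^\eta=m_\eta$ almost surely. The main obstacle I anticipate is the interchange of the limit $n\to\infty$ with the conditional expectation $\mathbb{E}(\cdot\,|\,\eta_n([0,t]))$ and the handling of the boundary term $m_n(D\cap\eta_n([0,t]))$: the filtrations $\sigma(\eta_n([0,t]))$ vary with $n$, so one must argue carefully — I would localize away from $\R$ and from $\eta([0,t])$ using Lemma \ref{c.uniformsepSLE}, choose $D$ so that neither $m$ nor the Green's measure charges $\partial D$, and invoke uniform integrability from the explicit bound $\mathbb{E}(m_n(D))=\int_D G_n$, reducing everything to a statement about joint convergence in law of finitely many real-valued functionals, which is what Lemmas \ref{c.uniformsepSLE} and the KS-coupling convergence provide.
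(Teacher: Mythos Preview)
Your overall strategy is the same as the paper's: verify the characterization \eqref{eq:minkchar1}--\eqref{eq:minkchar2} for $m^\eta(\cdot):=\mathbb{E}(m(\cdot)\,|\,\eta)$ by passing the identity for $(\eta_n,m_n)$ to the limit, and then invoke the uniqueness from \cite{LawlerSheffield}. The treatment of the integral term and the use of Lemma \ref{c.uniformsepSLE} for $m_n(D\cap\eta_n([0,t]))\to m(D\cap\eta([0,t]))$ are in line with the paper (the paper packages the latter as Lemma \ref{l:convergingmeasuresandsets}).

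There is, however, a genuine gap. You assert that ``$m(D\cap\eta([0,t]))$ is $\eta([0,t])$-measurable after limiting since the approximations are''. This does not follow: $m_n(D\cap\eta_n([0,t]))$ is $\sigma(\eta_n([0,t]))$-measurable, but the $\sigma$-algebras are varying with $n$, and there is no general principle guaranteeing that the a.s.\ limit is $\sigma(\eta([0,t]))$-measurable (indeed, there is no reason a priori that $m$ is even $\sigma(\eta)$-measurable). What you actually need, in order to go from
\[
\mathbb{E}\big(m(D)\,\big|\,\eta([0,t])\big)=\mathbb{E}\big(m(D\cap\eta([0,t]))\,\big|\,\eta([0,t])\big)+\tfrac12\int_{D\setminus\eta([0,t])}|g_t'(z)|^{2-d}G(g_t(z)-W_t)\,dz
\]
to the statement \eqref{eq:minkchar2} for $m^\eta$, is the equality
\(
\mathbb{E}\big(m(D\cap\eta([0,t]))\,\big|\,\eta\big)=\mathbb{E}\big(m(D\cap\eta([0,t]))\,\big|\,\eta([0,t])\big).
\)
The paper obtains this not by a measurability claim but by proving \emph{conditional independence}: it shows (via its Lemma \ref{l.conditional_independence_limit}, using that the Markov kernel $\eta_n([0,t])\mapsto \text{law of }\eta_n([t,\infty))$ converges to the $\SLE_4$ kernel) that, conditionally on $\eta([0,t])$, the random variable $m(D\cap\eta([0,t]))$ is independent of $\eta([t,\infty))$. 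This is the step that actually does the work, and it is missing from your proposal. It also yields property (i) (that $\hat m(D\cap\eta([0,t]))$ is $\eta([0,t])$-measurable), which you again derive from the incorrect measurability claim.

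A second point: your argument for the continuity in $t$ of $m^\eta(\eta([0,t]))$ (part of \eqref{eq:minkchar1}) is too soft. Continuity in $t$ for each $m_n$ does not pass to $m$ under vague convergence, and Lemma \ref{c.uniformsepSLE} controls spatial thickenings at fixed times, not the modulus of continuity in $t$. The paper handles this with a Borel--Cantelli argument based on the second-moment bound $\mathbb{E}[m_n(S)^2]\le c\,2^{-N(1+\kappa_n/8)}\int_S G_n$ (uniform in $n$) for dyadic squares $S$ of scale $2^{-N}$, which gives $\sup_{S}\hat m(S\cap D)\to 0$ a.s.\ as the scale goes to zero. You should supply an argument at this level rather than asserting that continuity is preserved in the limit.

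Finally, for the passage $\mathbb{E}(m_n(D)\,|\,\eta_n([0,t]))\to \mathbb{E}(m(D)\,|\,\eta([0,t]))$ your instinct is right (uniform integrability of $m_n(D)$ from $\mathbb{E}[m_n(D)]=\int_D G_n$), but ``converging filtration'' results do not directly apply when the conditioning variables themselves change. The paper instead observes that both sides of the $n$-level identity converge (the left-hand side in $L^1$ to some $Z$, the right-hand side a.s.), and then checks that $\mathbb{E}(Z\,|\,\eta([0,t]))=\mathbb{E}(m(D)\,|\,\eta([0,t]))$ via a short martingale/test-function argument (its Lemma \ref{l.convergence_conditional_expectation}). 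This is straightforward once isolated, but worth stating precisely rather than leaving as a heuristic.
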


\begin{proof}
    By applying Skorokhod embedding we may assume that $\eta_n\to \eta$ and $m_n\to m$ almost surely as $n\to \infty$. Set $\hat{m}(D)=\mathbb{E}(m(D)|\eta)$. By \cite[Section 3; \rev{immediately after (3.2)}]{LawlerSheffield}, it suffices to show that for every bounded $D$: 
    \begin{enumerate}[(i)]
        \item $\hat{m}(D\cap \eta([0,t])\rev{)}$ is $\rev{\sigma( \eta(s); s\le t)}$  measurable for every $t$;
        \item $\hat{m}(D\cap \eta([0,t]))$ is increasing and continuous in $t$;
        \item $\mathbb{E}(\hat{m}(D)|\rev{\sigma( \eta(s); s\le t)} )\rev{)}=\hat{m}(D\cap \eta([0,t]))+\tfrac12 \int_{D\setminus \eta([0,t])} |g_t'(z)|^{2-d}G(g_t(z)-W_t) dz$ 
        almost surely for every $t$, where $G$ is the $\SLE_4$ Green's function from Definition \ref{d:GreenSLE} and $d=d(4)=3/2$.
    \end{enumerate}

    We prove (ii), then (iii), and finally (i).
    \begin{itemize}
        \item[(ii)]  $\hat{m}(D)\cap \eta([0,t])\rev{)}$ is increasing in $t$, since $\hat{m}$ is a positive measure and $\eta([0,t])$ is increasing with $t$. To prove continuity, we use that  $\eta$ is almost surely continuous in $t$ and does not touch the real line except at $0$. This means that for fixed $t$, with probability one, for any $N>0$ there exists $\delta>0$ such that $\eta([t,t+\delta])$ is contained in some (random) square $S=2^{-N}([j,j+2]+i[k,k+2])$ with $j,k\in \Z$ and $k>0$. Write $\mathcal{Q}_N^+$ for the set of all such squares at level $N\ge 0$. It suffices to prove that \begin{align*}
        \sup_{S\in \mathcal{Q}_N^+} \hat{m}(S\cap D)\to 0 \   \text{ as $N\to \infty$ almost surely.}
        \end{align*}
         By Borel-Cantelli, for this it is enough to show existence of some $\eps_N$ converging to $0$ as $N\to \infty$, such that 
        \begin{equation}\label{BC}
        \sum_{N=0}^\infty\sum_{S\in \mathcal{Q}_N^+} \mathbb{P}(\hat{m}(S\cap D)>\eps_N)<\infty.
        \end{equation}
        To prove this we use that for any $S\in \mathcal{Q}_N^+$,
        $$ \mathbb{E}\left(\hat{m}(S)^2\right)\le \mathbb{E}\left(m(S)^2\right)\le \liminf_n \mathbb{E}\left(m_n(S)^2\right)
        $$
        thanks to Fatou's lemma. Moreover, \rev{by} \cite[Theorem 3.1 and Section 4.2]{LawlerRezaei}, there exists $c<\infty$, not depending on $n$, such that 
\begin{equation}\label{eq:moment_not_n}\mathbb{E}\left(m_n(S)^2\right)\le c2^{-N(1+(\kappa_n/8))} \int_S G_n(z) dz\end{equation}for all $S\in \mathcal{Q}_N^+$ and $n\ge 0$. Thus, for each $N$ 
        $$ \sum_{S\in \mathcal{Q}_N^+}\mathbb{P}(\hat{m}(S\cap D)>\eps_N) \le c\liminf_n 2^{-N(1+(\kappa_n/8))} \int_{D} G_n(z) dz \le c' 2^{-aN} $$
        for some $c'<\infty$ and (any) $a<3/2$. Choosing $\eps_N$ such that $\sum_N \eps_N^{-2} 2^{-aN}<\infty$, we obtain \eqref{BC}.

        \item[(iii)] By definition of $\hat{m}$, the tower property, and a change of variables this is equivalent to showing that 
    \begin{align}\label{e.3}
& \E{m(D)|\rev{\sigma( \eta(s); s\le t)} }= \nonumber
\\
& {\E{m(D\cap\eta([0,t])\rev{)} \mid \rev{\sigma( \eta(s); s\le \infty)} }} +\int_{f_t(D)} |(f_t^{-1})'(z)|^d G(z) \, dz.
\end{align}
By definition of $m_n$, we have that \[
\mathbb{E}\left(m_n(D)\mid \rev{\sigma( \eta_n(s); s\le t)} \right)=m_n(D\cap \eta_n([0,t]))+\int_{f_{t,n}(D)} |(f_{t,n}^{-1})'(z)|^{d_n} G_n(z) dz
\] for each $n$, where $G_n$ is the $\SLE_{\kappa_n}$ Green's function and $d_n=1+\kappa_n/8$.  By Lemma \ref{l:convergingmeasuresandsets} and Lemma \ref{c.uniformsepSLE} we have that $m_n(D\cap \eta_n([0,t]))\to m(D\cap\eta([0,t])\rev{)}$ almost surely. Moreover, we have that $f_{t,n}\to f_t$, $G_n\to G$ uniformly on compacts of $\HH$, and for any fixed $K>0$
\begin{align*}
\sup_n\int_{[-K,K]\times i[0,\eps]} |(f_{t,n}^{-1})'(z)|^{d_n} G_n(z) \, dz\to 0 \text{ as $\eps\to 0$,} \end{align*}
 (indeed, we have the deterministic identity $
\sup_n\sup_{z\in[-K,K]\times i[0,\eps]}|(f_{t,n}^{-1})'(z)|\le C\sqrt{\rev{1+}t}$ for some $C$ which follows from elementary stochastic calculus, see e.g. \cite[(57)]{LawlerSheffield}). Hence, \[
        \int_{f_{t,n}(D)} |(f_{t,n}^{-1})'(z)|^{d_n} G_n(z) dz \to \int_{f_t(D)}|(f_t^{-1})'(z)|^d G(z) dz \text{ almost surely.}\]    
        Combining the above we see that 
        \[ 
        \mathbb{E}\left(m_n(D)\mid\rev{\sigma( \eta_n(s); s\le t)} \right)\to m(D\cap \eta([0,t])\rev{)}+ \int_{f_t(D)} |(f_t^{-1})'(z)|^d G(z) dz
        \] 
        almost surely as $n\to \infty$.
        This convergence also holds in $L^1$ since $m_n(D)$ is uniformly integrable \rev{(see \eqref{eq:moment_not_n})}. Applying Lemma \ref{l.convergence_conditional_expectation} for the variables $Z=m(D\cap \eta([0,t])\rev{)}+\int_{f_t(D)}|(f_t^{-1})'(z)|^d G(z) dz$, $X_n=m_n(D)$ and $Y_n=\eta_n([0,t])$, we conclude that 
        \begin{equation}\label{e.}\E{m(D)|\rev{\sigma( \eta(s); s\le t)} }=\E{m(D\cap \eta([0,t]))\, | \, \rev{\sigma( \eta(s); s\le t)} }+\int_{f_t(D)} |(f_t^{-1})'(z)|^d G(z) dz.
        \end{equation}

        We now apply Lemma \ref{l.conditional_independence_limit}, with $Z_n =\eta_n([0,t])\to \eta([0,t])$, $Y_n=\eta_n([t,\infty)]\rev{)}\to \eta([t,\infty))$, $Z_n=m_n\left(D\cap \eta_n([0,t])\right)\to m\left(D\cap \eta([0,t])\right)$. We are allowed to apply this lemma since for any continuous and bounded $f$, writing $\mathbb{E}^{\kappa}_{D,a,b}$ for the law of an SLE$_\kappa$ from $a$ to $b$ in $D$, we have that $\E{f(\eta_n([t,\infty)))\mid \eta_n([0,t])}=\mathbb{E}^{\kappa_n}_{\rev {\HH\setminus}\eta_n([0,t]),\eta(t),\infty}(f(\eta))$, and this converges in probability to the $\rev{\sigma( \eta(s); s\le t)} $-measurable random variable $\mathbb{E}^4_{\rev {\HH\setminus}\eta([0,t]),\eta(t),\infty}(f(\eta))$. Thus, we see that conditionally on $\eta([0,t])$, $m(D\cap \eta([0,t]))$ is independent of $\eta([t,\infty))$. This gives that
        \begin{align}\label{e.slecondind}
        \E{m(D\cap \eta([0,t]))\mid\rev{\sigma( \eta(s); s\le \infty)} } &= \E{m(D\cap \eta([0,t]))\mid \rev{\sigma( \eta(s); s\le t)} , \rev{\sigma( \eta(s); s\ge t)}  }\nonumber\\
        &=\E{m(D\cap \eta([0,t]))\mid \rev{\sigma( \eta(s); s\le t)}  } .
        \end{align}
        Substituting this into \eqref{e.} yields \eqref{e.3}.

        \item[(i)] This follows from \eqref{e.slecondind} since $\hat{m}(D\cap\eta([0,t]))=\E{m(D\cap\eta([0,t]))|\rev{\sigma( \eta(s); s\le \infty)} }$ and the right-hand side of \eqref{e.slecondind} is manifestly $\rev{\sigma( \eta(s); s\le t)}$-  measurable.
    \end{itemize}
\end{proof}

We are now ready to prove (\ref{i:nu}). 

\begin{proof}[Proof of (\ref{i:nu})]
We first address tightness, by showing tightness of each term for its respective topology. The tightness of $(\hat \Gamma_n)_{n\in \N}$, $(\eta_n)_{n\in \N}$, $(\hat\Gamma^T_n)_{n\in \N}$, $(I_n)_{n\in \N}$, $({\nu}_n)_{n\in \N}$ follows from the convergence proved in  (\ref{i:ggeta}), (\ref{i:etaI}) and (\ref{i:ghat}). The measures $m_n$ and $\rev{\mu}_n$ satisfy that for any compact set $K\subseteq \bar \HH$, $\rev{\sup_n}\E{m_n(D)}=\rev{\sup_n}\E{\mu_n(D)}<\infty $ \rev{(by the convergence in $L^2(\mathbb{P})$; Proposition \ref{P:confminkone})}. Thus they are tight for the vague convergence of measures on $\overline{\HH}$. Finally, $\rev{\mu}_n(A_n)$ is tight, since for any $\epsilon>0$ there exists a compact $K\subseteq \overline{\HH}$ such that $\P(A_n\subseteq K)>1-\epsilon$ for all $n\in \N$.

For the second statement of (\ref{i:nu}), we suppose that 
\[
(\hat{\Gamma}_n,\eta_n,m_n,\mu_n,\mu_n(A_n))\to (\hat\Gamma,\eta,m,\mu,X)
\]
in law along a subsequence. We can already observe that by Lemma \ref{l.etam}, $D\mapsto \mathbb{E}(m(D)|\eta)$ is almost surely equal to $m_\eta$, and by (\ref{i:ggeta}), the joint law of $(\hat\Gamma,\eta)$ is characterised. In particular, $\eta,m$ are independent of $\hat\Gamma$. 

Now, we apply Skorohod representation theorem, to find a probability space where the convergence is almost sure. On this probability space, we note  that since $m$ is a limit of measures satisfying the condition $(\mathcal E_\beta)$ for any $\beta<1+3/2$,  $\rev{\mu_{\hat\Gamma}^{1}[m]}$ is well-defined. Moreover
\begin{equation}\label{e:nuX}
    \mu=\mu_{\hat\Gamma}^{1}[m] \text{ and } X=\mu(\eta([S,T]))
\end{equation}
almost surely. Indeed, the first equality holds due to Proposition \ref{p.convergence_liouville}, since $(m_n)_{n\in \N}$ satisfies $(\mathcal E_\beta)$ for $\beta <1+3/2$. For the second, we apply Lemma \ref{l:convergingmeasuresandsets}. Since $\mu_n \to \mu$ and $\eta_n([S,T])\to \eta([S,T])$  we only need to check that for any $a>0$
\begin{align*}
\lim_{\epsilon\to 0}\sup_{n\in \N} \P(\mu_n((\eta_n([S,T])_\epsilon) -\mu_n(\eta_n([S,T])) \geq a)=0.
\end{align*}
This follows from the same proof as in Lemma \ref{c.uniformsepSLE}, using that the separation into events carried out there becomes a separation of events that are measurable with respect to $\eta$, and using that $\E{\mu_n(D)}= \E{m_n(D)}$ for all $n$.

Next, we claim that 
\begin{equation}\label{e:cond_nu}
\text{the measure }
D\mapsto  \E{\nu(D)\mid \hat\Gamma,\eta}  \text{ is equal to } \mu_{\hat\Gamma}^{1}[m].
\end{equation}
This is because, for any deterministic compact $D\subseteq \HH$, we have that $\mu_{\hat\Gamma}^{1}[m]$ is equal to the $L^1$ limit as $\epsilon \to 0$ of  $\int_{D}e^{\hat\Gamma_\epsilon(x)+\frac{1 }{2 }\log(\epsilon)}m(dx)$, and thus
\rev{\begin{align*}
\E{ \mu_{\hat\Gamma}^{1}[m] \mid \hat\Gamma,\eta} (D) &= \lim_{\epsilon\to 0}\E{\int_D e^{\hat\Gamma_\epsilon(x) +\frac{1 }{2 } \log(\epsilon)} m(dx)\mid \hat\Gamma, \eta } \\ 
&= \lim_{\epsilon \to 0 }\int_D e^{\hat\Gamma_\epsilon(x) +\frac{1}{2} \log(\epsilon)} m_\eta(dx) =\mu_{\hat\Gamma}^{1}[m](D).
\end{align*}}
Here in the second equality we used that $\hat\Gamma$ is independent of $(\eta,m)$.

Finally,  we combine \eqref{e:nuX} and \eqref{e:cond_nu}, to obtain that for any compact $K\subseteq \overline \HH$ \begin{align*}
\mathbb{E}(X\1_{\eta([0,T])\subseteq K} \mid\eta,\hat\Gamma)=\mu_{\hat\Gamma}^{1}[m](\eta([S,T]))\1_{\eta([0,T])\subseteq K}. \end{align*}
This is valid because on the event $ \{\eta([0,T])\subseteq K\}$, one can approximate $\eta([S,T])$ arbitrarily well by a union of dyadic squares, and this approximation at any fixed level can take only finitely many possible values. 
Letting $K\nearrow \HH$ we obtain that $\E{X\mid \eta,\hat\Gamma}=\mu_{\hat\Gamma}^{1}[m](\eta([S,T]))$ as desired. Note that this holds even though $X$ may not belong to $L^1$, because $X\geq 0$ and $X\1_{\eta([0,T])\subseteq K}\nearrow X$ almost surely.
\end{proof}

\appendix

\section{One-sided conformal Minkowski content}\label{s.appendix_Minkowski}

As usual, let $\kappa=\gamma^2\le 4$,
and let $\eta$ be an SLE$_\kappa$ in $\mathbb{H}$ from $0$ to $\infty$. For $z\in \HH$, let $\tau_r(z)$ be the first time that $\eta$ gets within distance $e^{-r}$ of $z$, and $T_r(z)$ be the first time that the conformal radius of the complement of the curve (as seen from $z$) reaches $e^{-r}$, so that $\tau_r(z)\le T_r(z)\le \tau_{r+\log 4}(z)$
by the Koebe quarter theorem. Recall that $A^L(z)$ denotes the event that $z$ lies on the left-hand side of $\eta([0,\infty))$, $A^R(z)$ denotes the event that it lies on the right hand side, and $d=1+\kappa/8 \in (1,3/2]$.
\rev{We also use the notation $\eta_s=\eta[0,s]$ for compactness in this section.}

The key to the proof of Theorem \ref{T:confminkone} is the following proposition, which says that masses of dyadic squares converge. Let $\mathcal{Q}_+$ denote the set of dyadic squares of the form $[j2^{-n},(j+1)2^{-n}]\times i[k2^{-n},(k+1)2^{-n}]$ with $n\in \mathbb{Z}_{\ge 0}$, $j\in \mathbb{Z}$ and $ k\in\mathbb{Z}, k>0$. \rev{For $r>0$ and $q=L,R$ we also set $$J_r^q(z)=e^{r(2-d)}\mathbf{1}_{\CR(z,\HH \setminus \eta)<e^{-r}}\I{A^q(z)}.$$} 
\begin{propn}\label{P:confminkone}
For any fixed dyadic square $\Gamma\in \mathcal{Q}_+$,  the limit
\begin{equation*}
\lim_{r\to \infty} e^{r(2-d)} \int_{\Gamma} \mathbf{1}_{\CR(z,\HH \setminus \eta)<e^{-r}}\I{A^q(z)} \, dz = \lim_{r\to \infty} \int_\Gamma J_r^q(z) \, dz =: \lim_{r\to \infty} J_r^q(\Gamma)=J^q(\Gamma)
\end{equation*}
exists almost surely and in $L^2(\mathbb{P})$, for $q=L,R$, and satisfies $\mathbb{E}(J^q(\Gamma))=\frac{1}{2}\int_\Gamma G(z) dz$. 
\end{propn}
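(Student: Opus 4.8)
The plan is to run the second-moment argument of \cite{LawlerRezaei}, the only new ingredient being the symmetrised one-point estimate of Lemma \ref{L:onepointmain}, which is precisely what produces the factor $\tfrac12$. Fix $\Gamma\in\mathcal Q_+$ (no Gaussian free field appears in this appendix); then $\Im z$ is bounded away from $0$ on $\Gamma$. For the first moment, note that the conformal radius of $\HH\setminus\eta([0,t])$ seen from $z$ decreases continuously in $t$ to $\CR(z,\HH\setminus\eta)$, so $\{\CR(z,\HH\setminus\eta)<e^{-r}\}=\{T_r(z)<\infty\}$ up to a null set and $\E{J_r^i(\Gamma)}=\int_\Gamma e^{r(2-d)}\P(T_r(z)<\infty,\,A^i(z))\,dz$. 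Lemma \ref{L:onepointmain} gives pointwise convergence of the integrand to $\tfrac12 G(z)$ on $\Gamma$, while the one-point upper bound (cf.\ the estimates at the start of this appendix, or \cite{LawlerRezaei}) gives the domination $e^{r(2-d)}\P(T_r(z)<\infty)\le C(\Im z)^{d-2}$, bounded on the bounded set $\Gamma$; dominated convergence then yields $\E{J_r^i(\Gamma)}\to\tfrac12\int_\Gamma G$. In particular the value $\E{J^i(\Gamma)}=\tfrac12\int_\Gamma G$ claimed in the proposition will follow once $L^2$ convergence is established.

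For $L^2$ convergence it suffices to show $(J^i_r(\Gamma))_r$ is Cauchy in $L^2(\P)$, i.e.\ that
\[
\E{J_r^i(\Gamma)\,J_s^i(\Gamma)}=\iint_{\Gamma\times\Gamma}p^i_{r,s}(z,w)\,dz\,dw,\qquad
p^i_{r,s}(z,w):=e^{(r+s)(2-d)}\,\P\big(T_r(z)<\infty,\,T_s(w)<\infty,\,A^i(z),\,A^i(w)\big),
\]
converges as $r,s\to\infty$. I would establish (i) a domination $p^i_{r,s}(z,w)\le\Phi(z,w)$, uniform in $r,s$, with $\iint_{\Gamma\times\Gamma}\Phi<\infty$, and (ii) pointwise convergence $p^i_{r,s}(z,w)\to H^i(z,w)$ for a.e.\ $(z,w)\in\Gamma\times\Gamma$, the limit being independent of the way $r,s\to\infty$; dominated convergence then gives the Cauchy property, hence the $L^2$ limit $J^i(\Gamma)$. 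For (i) I would invoke the two-point upper bound of \cite{LawlerRezaei} (which applies verbatim here, since the events $A^i$ only decrease probabilities): splitting $p^i_{r,s}$ according to which of $z,w$ the curve reaches at the required conformal scale first, applying the strong Markov property there and the one-point upper bound conformally inside the remaining slit domain, one gets $p^i_{r,s}(z,w)\le C\,G(z)\bar G(z,w)+(\text{symmetric in }z,w)$, where $\bar G(z,w)\asymp|z-w|^{-(2-d)}$ near the diagonal and decays at infinity; since $2-d<1$ and $\Gamma$ is bounded this is integrable over $\Gamma\times\Gamma$. Almost sure convergence is then obtained from the $L^2$ rate (polynomial, in fact exponential by Lemma \ref{L:onepointmain}) via Borel--Cantelli along integer $r$, the reduction to integer $r$ being handled as in \cite{LawlerRezaei}.

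The crux is (ii). Fix distinct $z,w\in\Gamma$ and consider the part of $p^i_{r,s}$ on which the curve reaches conformal scale $e^{-r}$ around $z$ before scale $e^{-s}$ around $w$. On $\{T_r(z)<\infty\}$, for $r$ large, $w$ sits in $\HH_{T_r(z)}:=\HH\setminus\eta([0,T_r(z)])$ at conformal radius bounded away from $0$; so by the strong Markov property at $T_r(z)$ and Lemma \ref{L:onepointmain} applied in $\HH_{T_r(z)}$, uniformly on $\{T_r(z)<\infty\}$ we have, as $s\to\infty$,
\[
e^{s(2-d)}\,\P\big(T_s(w)<\infty,\ w\text{ lies to side }i\text{ of the future curve}\ \big|\ \mathcal F_{T_r(z)}\big)\ \longrightarrow\ \tfrac12\,G_{\HH_{T_r(z)}}(w),
\]
where $G_D$ denotes the conformally covariant SLE$_\kappa$ Green's function in a simply connected domain $D$. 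One must also verify that $A^i(z)$ is, up to $\P$-probability tending to $0$ as $r\to\infty$, $\mathcal F_{T_r(z)}$-measurable: this is where simplicity of $\eta$ (for $\kappa\le4$) together with the exponentially fast equilibration of the angle process in the proof of Lemma \ref{L:onepointmain} enters, ruling out the curve returning near $z$ on the opposite side. Combining, $p^i_{r,s}(z,w)\to\tfrac12\,e^{r(2-d)}\,\E{\mathbf 1_{\{T_r(z)<\infty,\,A^i(z)\}}\,G_{\HH_{T_r(z)}}(w)}+(\text{symmetric})$, and a final application of the one-point asymptotics as $r\to\infty$, using conformal covariance and continuity of $D\mapsto G_D(w)$, identifies this with a single limit $H^i(z,w)$.

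The book-keeping with the events $A^i(z),A^i(w)$ — making precise their asymptotic $\mathcal F_{T_r(z)}$-measurability and extracting a genuine (not merely subsequential) limit in (ii) — is the only real difficulty I anticipate; all of the underlying analytic estimates are already available from \cite{LawlerRezaei} for the unrestricted Minkowski content, and the one genuinely new input, the symmetry $\int_0^\pi\sin(\theta)\,\P_\theta(\mathcal A^L)\,d\theta=\int_0^\pi\sin(\theta)\,\P_\theta(\mathcal A^R)\,d\theta=1$ responsible for the factor $\tfrac12$, is supplied by Lemma \ref{L:onepointmain}.
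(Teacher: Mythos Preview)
Your first-moment computation is correct and matches the paper. The second-moment argument, however, has a genuine gap: the assertion that $A^i(z)$ is, up to vanishing probability, $\mathcal F_{T_r(z)}$-measurable is false. At time $T_r(z)$ the point $z$ still lies in the unbounded component of $\HH\setminus\eta([0,T_r(z)])$; the side of $z$ is decided only by the \emph{future} of the curve. In the radial coordinates of Lemma~\ref{L:onepointmain}, one has $\P(A^i(z)\mid\mathcal F_{T_r(z)})=\P_{\theta_{r'}}(\mathcal A^i)$ on $\{T_r(z)<\infty\}$, and the angle $\theta_{r'}$ has, conditionally on $\{T_r(z)<\infty\}$, an asymptotically non-degenerate law (density proportional to $\sin\theta$). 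Hence $\P(A^i(z)\mid\mathcal F_{T_r(z)})$ is bounded away from $0$ and $1$ with probability comparable to $\P(T_r(z)<\infty)$, so no $\mathcal F_{T_r(z)}$-measurable proxy can approximate $\1_{A^i(z)}$ at the required scale. The non-return argument you invoke only prevents the curve from \emph{changing} the local side after it has been locally determined; it does not determine the side at time $T_r(z)$.

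The paper circumvents this by a different route: instead of proving convergence of the two-point function $p^i_{r,s}(z,w)$, it bounds the increments $Q^i_{r,\delta}=J^i_{r+\delta}-J^i_r$ and shows $\E{Q^i_{r,\delta}(z)Q^i_{r,\delta}(w)}\le ce^{-\beta r}$ for $|z-w|\ge e^{-ur}$ (Lemma~\ref{L:secondmomentred} and Corollary~\ref{C:secondmoment}). Two features make this work. First, one stops not at $T_r(z)$ but at a later time $\sigma$, the first exit from a ball of radius $e^{-(3/4)r}$ around $z$; the \emph{local} side $\tilde A^i(z)$ of that exit \emph{is} $\mathcal F_\sigma$-measurable, and $\{A^i(z)\ne\tilde A^i(z)\}$ is contained in the return event, which is genuinely small. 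Second, and this is the key cancellation your approach lacks, after conditioning on $\eta_\sigma$ the quantity $\E{Q^i_{r,\delta}(w)\,\1_{\tau_r(w)<\infty}\mid \eta_\sigma}$ is itself $O(e^{-\alpha r})$ by the sharp one-point estimate \eqref{E:greensLR}, with no need for $A^i(w)$ to be measurable. Your direct-convergence strategy would instead require a sharp one-sided \emph{two-point} Green's function estimate (existence of $\lim_r e^{r(2-d)}\E{\1_{T_r(z)<\infty,A^i(z)}G_{\HH_\sigma}(w)}$), which is neither available nor an obvious consequence of Lemma~\ref{L:onepointmain}.
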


\subsection{Proof of Proposition \ref{P:confminkone}}
As mentioned and proved in Section \ref{S:onesidedmink}, we will make use of the following sharp one-point estimate:  there are $ C,\alpha,r_0\in (0,\infty)$, depending only on $\kappa$ such that
\begin{equation}\label{E:greensLR}
\bigg|\frac{2e^{r(2-d)}}{G(z)}\mathbb{P}(T_{r}(z)<\infty,\I{A^q(z)})- 1 \bigg| \le C\big(\frac{e^{-r}}{\CR(z,\HH)}\big)^\alpha
\end{equation}
for all $q=L,R$, $z\in \HH$ and $r\ge r_0-\log(2\Im(z))$. This also implies that

\begin{equation}\label{E:greens}
\big|\frac{e^{r(2-d)}}{G(z)}\mathbb{P}(T_{r}(z)<\infty)- 1 \big| \le C\big(\frac{e^{-r}}{\CR(z,\HH)}\big)^\alpha
\end{equation}
for all  $z\in \HH$ and $r\ge r_0-\log(2\Im(z))$. 
\medskip 
We also make use of the following upper bounds. 

\begin{lemma}\label{L:upperbounds} There exists $c<\infty$, such that the following hold.
\begin{enumerate} 
\item \label{E:tau1} For all $u>0$ and $|z|,|w|\ge e^{-u}$ with $|z-w|\ge e^{-u}$, we have 
\[
    \mathbb{P}(T_{r+u}(z)<\infty, T_{s+u}(w)<\infty)  \le \mathbb{P}(\tau_{r+u}(z)<\infty, \tau_{s+u}(w)<\infty) \le ce^{(s+r)(d-2)}\] for $0<s<r$.
  \item\label{E:tau2} For all $u>0$ and $|z|,|w|\ge e^{-u}$ with $|z-w|\ge e^{-u}$ we have  
    \[
    \mathbb{P}(\tau_{s+u}(w)<\tau_{r+u}(z)<\tau_{r+u}(w)<\infty)  \le ce^{2r(d-2)}e^{-a s}\]
    for $0<s<r$, with $a=\frac{1}{2}(\frac{8}{\kappa}-1)$.
\item \label{E:tau3} For all $z,w$ with $\Im(z),\Im(w)\ge 1$ and $|z-w|\le 1$, 
\[
   \mathbb{P}(T_{r}(z)<\infty, T_{s}(w)<\infty) \le \mathbb{P}(\tau_{r}(z)<\infty, \tau_{s}(w)<\infty)\le ce^{(s+r)(d-2)}|z-w|^{d-2}
\]
\end{enumerate}
\end{lemma}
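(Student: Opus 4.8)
\textbf{Proof proposal for Lemma \ref{L:upperbounds}.}

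The plan is to reduce all three statements to the one-point estimate \eqref{E:greens} combined with the Markov property of SLE at suitable hitting times, exactly in the spirit of the two-point bounds in \cite{LawlerRezaei, LawlerSheffield}. For part \eqref{E:tau1}, I would first observe that $\tau_{r+u}(z) \le T_{r+u}(z)$ always, so the first inequality is immediate. For the main bound, I would condition on the smaller of the two hitting times: WLOG say $\tau_{s+u}(w) \le \tau_{r+u}(z)$ occurs (the reverse case is handled symmetrically, and costs only a factor $2$). On the event $\{\tau_{s+u}(w) < \infty\}$, apply the Markov property of SLE at time $\tau_{s+u}(w)$: the conditional law of the future of $\eta$ is that of an SLE$_\kappa$ in the complement of the curve so far, and the conditional probability that it subsequently gets within distance $e^{-(r+u)}$ of $z$ is, by \eqref{E:greens} applied in the image domain together with conformal covariance of $G$ and the Koebe quarter theorem to compare conformal radius with Euclidean distance, at most $c\,e^{(r+u)(d-2)} \cdot \CR(z, \HH\setminus\eta([0,\tau_{s+u}(w)]))^{2-d} \le c'\, e^{r(d-2)}$, using that the conformal radius is bounded above (up to constants) by the distance from $z$ to $\{|w'|\le e^{-u}\}\cup\{w\}$, which is $\ge e^{-u} \wedge |z-w|/2$, whence $\CR(z,\cdot)^{2-d}$ is bounded using $|z-w|\ge e^{-u}$ and $|z|\ge e^{-u}$. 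Multiplying by the unconditional bound $\P(\tau_{s+u}(w)<\infty)\le c\,e^{s(d-2)}$ from \eqref{E:greens} gives the claimed $c\,e^{(s+r)(d-2)}$. (The hypotheses $|z|,|w|\ge e^{-u}$ ensure we never need the estimate below the scale $e^{-u}$ where it could fail.)

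For part \eqref{E:tau2}, the key new input is the exponential decay in $s$, which comes from the $\sin(\arg\cdot)^{8/\kappa-1}$ weight in $G$ — equivalently, from the fact that once $\eta$ has come within distance $e^{-(s+u)}$ of $w$ and then within distance $e^{-(r+u)}$ of $z$, it must subsequently come \emph{back} close to $w$, at which point $w$ is very near the tip of the curve / has small harmonic measure weight. Concretely I would decompose along the three successive hitting times $\tau_{s+u}(w) < \tau_{r+u}(z) < \tau_{r+u}(w) < \infty$ and apply the Markov property twice. The first two factors are controlled as in part \eqref{E:tau1}, giving $c\,e^{(s+r)(d-2)}$. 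For the last factor, I use that at time $\tau_{r+u}(z)$ the point $w$ lies within distance roughly $|z-w|$ of $z$ but the curve has already passed near $w$ at the earlier, coarser scale $e^{-(s+u)}$; a Beurling-type / harmonic measure estimate then shows that the conditional probability the curve returns to within $e^{-(r+u)}$ of $w$ is at most $c\,e^{(r-s)(d-2)} e^{-a s}$ for $a = \tfrac12(\tfrac8\kappa - 1)$, the exponent $a$ being precisely the Hölder-type exponent governing the decay of the Green's function near the tip; this is the content of the boundary estimates in \cite[Section 2]{LawlerRezaei}. Combining and simplifying the powers of $e$ (noting $(s+r)(d-2) + (r-s)(d-2) = 2r(d-2)$) yields $c\,e^{2r(d-2)} e^{-as}$.

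Part \eqref{E:tau3} is the easiest: it is just part \eqref{E:tau1} specialised and sharpened to extract the $|z-w|^{d-2}$ factor. With $\Im(z),\Im(w)\ge 1$ and $|z-w|\le 1$, condition on the earlier hitting time, say $\tau_s(w) \le \tau_r(z)$; by the Markov property and \eqref{E:greens} (valid since $r \ge r_0 - \log(2\Im(z))$ holds for all $r$ when $\Im(z)\ge 1$, up to enlarging $r_0$ — more carefully one uses \eqref{E:greens} only for $r$ large and handles bounded $r$ trivially) the conditional probability of then reaching within $e^{-r}$ of $z$ is at most $c\,e^{r(d-2)}\CR(z,\HH\setminus\eta([0,\tau_s(w)]))^{2-d}$, and now $\CR(z,\cdot) \le c|z-w|$ because the curve has come within $e^{-s}\le |z-w|$... — here I would instead directly bound $\CR(z,\HH\setminus\eta([0,\tau_s(w)]))$ above by a constant times $d(z,\eta([0,\tau_s(w)])) \le d(z, w) + e^{-s} \le 2|z-w|$ when $e^{-s} \le |z-w|$, giving the factor $|z-w|^{2-d}$ in the numerator i.e. $|z-w|^{d-2}$ after inverting; multiplying by $\P(\tau_s(w)<\infty)\le c\,e^{s(d-2)}$ finishes it. The main obstacle in all three parts is making the harmonic-measure / Koebe comparisons between conformal radius and Euclidean distance uniform over the relevant range of $z,w,s,r$ so that the constant $c$ is genuinely universal; this is routine given the one-point estimate but requires care at the boundary of the valid parameter range, and is exactly why the hypotheses are stated with the cutoffs $|z|,|w|\ge e^{-u}$ and $\Im(z),\Im(w)\ge 1$.
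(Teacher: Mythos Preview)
The paper does not give an independent proof of this lemma: its entire proof is the sentence ``This follows from \cite[Lemma 2.9]{LawlerRezaei} and \cite[Equation (19)]{LawlerRezaei}.'' Your proposal is essentially a sketch of how those results in \cite{LawlerRezaei} are proved --- Markov property at successive hitting times, conformal covariance of the one-point Green's function, and Koebe to pass between conformal radius and Euclidean distance --- so your approach is the same as the one the paper is implicitly invoking.

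A couple of places in your sketch would need tightening before it stands as a proof. In part~\eqref{E:tau3} you only treat the case $e^{-s}\le |z-w|$; when $e^{-s}>|z-w|$ your inequality $d(z,\eta([0,\tau_s(w)]))\le d(z,w)+e^{-s}\le 2|z-w|$ fails, and one instead first runs the curve to scale $|z-w|$ near one of the points (producing the factor $|z-w|^{d-2}$) and then iterates the one-point bound. In part~\eqref{E:tau2}, the specific exponent $a=\tfrac12(\tfrac{8}{\kappa}-1)$ is not a generic Beurling/harmonic-measure consequence but comes from the $\sin(\arg\cdot)^{8/\kappa-1}$ factor in the SLE Green's function (equivalently, the boundary exponent for SLE near the tip), which is exactly what \cite[Equation (19)]{LawlerRezaei} records; your narrative is right but you should point to that input rather than a Beurling-type estimate. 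Finally, in parts~\eqref{E:tau1} and~\eqref{E:tau2}, after applying the Markov property the relevant quantity is $|g_t'(z)|^{2-d}G(g_t(z)-W_t)$ rather than just a conformal radius, and bounding this uniformly is what the hypotheses $|z|,|w|\ge e^{-u}$, $|z-w|\ge e^{-u}$ are really buying you; this is handled carefully in \cite{LawlerRezaei} and is worth flagging as the actual technical content.
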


\begin{proof} This follows from \cite[Lemma 2.9]{LawlerRezaei} and \cite[Equation (19)]{LawlerRezaei}.
\end{proof}
\medskip

For the proof of Proposition \ref{P:confminkone}, most of the work will be in establishing the following lemma. Fix $q=L$ or $R$ and set 
$Q_{r,\delta}^q(z):=J_{r+\delta}^q(z)-J_r^q(z).$
\begin{lemma}\label{L:secondmomentred} 
There are real numbers $ u,s,\beta>0$ and $c<
\infty$ such that
\begin{equation}\label{E:secondmomentred} \mathbb{E}\left(Q_{r,\delta}^q(z)Q_{r,\delta}^q(w)\I{\tau_{r}(z)\le \tau_{sr}(w)< \tau_{r}(w)<\infty}\right) \le ce^{-\beta r}\end{equation}
for all $z,w\in \HH$ with $\Im(z),\Im(w)\ge 1$ and $|z-w|\ge e^{-ur}$.
\end{lemma}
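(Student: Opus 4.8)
The plan is to follow the second--moment computation of \cite{LawlerRezaei}. Set $E:=\{\tau_r(z)\le\tau_{sr}(w)<\tau_r(w)<\infty\}$ and recall $J^i_\rho(p)=e^{\rho(2-d)}\mathbf{1}_{\CR(p,\HH\setminus\eta)<e^{-\rho}}\mathbf{1}_{A^i(p)}$, so that $Q^i_{r,\delta}(p)=J^i_{r+\delta}(p)-J^i_r(p)$. Expanding the product and using $\mathbf{1}_{\CR(p,\HH\setminus\eta)<e^{-\rho}}=\mathbf{1}_{T_\rho(p)<\infty}$,
\[
\mathbb{E}\big(Q^i_{r,\delta}(z)Q^i_{r,\delta}(w)\mathbf{1}_E\big)=\sum_{\rho,\rho'\in\{r,r+\delta\}}\epsilon_{\rho,\rho'}\,e^{(\rho+\rho')(2-d)}\,\mathbb{P}\big(T_\rho(z)<\infty,\,T_{\rho'}(w)<\infty,\,A^i(z),\,A^i(w),\,E\big),
\]
with $(\epsilon_{r+\delta,r+\delta},\epsilon_{r+\delta,r},\epsilon_{r,r+\delta},\epsilon_{r,r})=(1,-1,-1,1)$; note $\sum_{\rho,\rho'}\epsilon_{\rho,\rho'}=0$. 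So it suffices to prove a sharp two--point estimate: there are a function $\Psi(z,w)$ \emph{independent of $\rho,\rho'$} and a rate $\beta'>0$ such that, uniformly over $\rho,\rho'\in\{r,r+\delta\}$ and over $z,w$ with $\Im z,\Im w\ge1$ and $|z-w|\ge e^{-ur}$,
\[
\mathbb{P}\big(T_\rho(z)<\infty,\,T_{\rho'}(w)<\infty,\,A^i(z),\,A^i(w),\,E\big)=e^{-(\rho+\rho')(2-d)}\,\Psi(z,w)\,\big(1+O(e^{-\beta' r})\big).
\]
Granting this, the leading terms cancel and $\big|\mathbb{E}(Q^i_{r,\delta}(z)Q^i_{r,\delta}(w)\mathbf{1}_E)\big|\le C\,\Psi(z,w)\,e^{-\beta' r}$.

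I would establish the sharp two--point estimate exactly as such estimates are proved in \cite{LawlerRezaei}, using the Markov property of $\SLE$ together with the sharp one--point estimate \eqref{E:greensLR}. Condition first at $\tau_r(z)$, the fine approach to $z$, which on $E$ precedes $\tau_{sr}(w)$; by the Markov property the image of the future curve is a fresh $\SLE_\kappa$, and applying \eqref{E:greensLR} to it --- the relevant reference scale being $2\Im z\ge2$ --- the $z$--contribution factors off as $\tfrac12 G(z)e^{\rho(d-2)}(1+O(e^{-\alpha r}))$, the probability $\tfrac12$ of the side $A^i(z)$ being already incorporated into \eqref{E:greensLR}. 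On $E$, the conformal radius seen from $w$ stays $\asymp e^{-sr}$ up to $\tau_{sr}(w)$, so conditioning a second time at the first coarse approach to $w$ and applying \eqref{E:greensLR} once more --- now with reference scale $\asymp e^{-sr}$, hence error $O\big((e^{-r}/e^{-sr})^\alpha\big)=O(e^{-\alpha(1-s)r})$ --- extracts the $w$--contribution $\propto e^{\rho'(d-2)}$. The conformal distortion factors produced by the two conditionings are controlled by the Koebe distortion theorem, and the separation $|z-w|\ge e^{-ur}$ keeps the two points apart throughout, so the conformal radius seen from the point not yet approached never collapses and the interaction between the two approaches is a bounded power of $e^{-(s-u)r}$. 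All of the $(\rho,\rho')$--dependence lands in the explicit prefactors $e^{-\rho(2-d)},e^{-\rho'(2-d)}$, and one may take $\beta'=\min\{\alpha,\alpha(1-s),s-u\}$.

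It remains to bound $\Psi$. Taking $\rho=\rho'=r$ in the estimate gives $\Psi(z,w)\le C e^{2r(2-d)}\,\mathbb{P}(\tau_r(z)\le\tau_{sr}(w)<\tau_r(w)<\infty)$; conditioning this last probability at $\tau_r(z)$, on the event the conformal radius seen from $w$ is still $\ge e^{-sr}$, so \eqref{E:greens} (equivalently Lemma \ref{L:upperbounds}\eqref{E:tau1}) applied to the fresh curve yields $\mathbb{P}(\tau_r(z)\le\tau_{sr}(w)<\tau_r(w)<\infty)\le Ce^{-(1-s)r(2-d)}\,\mathbb{P}(\tau_r(z)<\infty)\le Ce^{-(2-s)r(2-d)}$. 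Hence $\Psi(z,w)\le Ce^{sr(2-d)}$ and
\[
\big|\mathbb{E}\big(Q^i_{r,\delta}(z)Q^i_{r,\delta}(w)\mathbf{1}_E\big)\big|\le C\,e^{r\,(s(2-d)-\beta')}.
\]
Since $2-d\in[1/2,1)$ and $d-1=\kappa/8>0$, we may pick $s\in(0,\alpha/(\alpha+2-d))$ and then $u\in(0,\min\{s,\,s(d-1)\})$, so that $\beta'=\min\{\alpha,\alpha(1-s),s-u\}>s(2-d)$; with $\beta:=\beta'-s(2-d)>0$ this is the assertion \eqref{E:secondmomentred}.

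The main obstacle is the sharp two--point estimate. In contrast to the one--point case it cannot come from a single use of the Markov property: $Q^i_{r,\delta}(z)$ is a genuine time--$\infty$ functional, since both whether the conformal radius seen from $z$ ever drops below $e^{-(r+\delta)}$ and which side of $\eta$ the point $z$ finally lies on are decided only in the limit. This is why one must propagate the sharp one--point estimate \eqref{E:greensLR} (in which the side--asymptotic $\tfrac12$ is built in) through two successive conditionings while tracking the accumulated distortion and the interaction between the approaches to $z$ and $w$; this machinery is all present in \cite{LawlerRezaei}, so the only new point is the bookkeeping of the event $E$ and of the one--sided indicators $A^i$. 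Once Lemma \ref{L:secondmomentred} is in hand, the deduction of Proposition \ref{P:confminkone} --- summability over dyadic scales of the second moments of the increments --- and hence of Theorem \ref{T:confminkone} proceeds as in \cite{LawlerRezaei}.
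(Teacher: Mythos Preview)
Your argument has a real gap at the step where you ``condition first at $\tau_r(z)$'' and claim the $z$-contribution ``factors off as $\tfrac12 G(z)e^{\rho(d-2)}(1+O(e^{-\alpha r}))$''. At time $\tau_r(z)$ \emph{none} of the events entering $Q^i_{r,\delta}(z)$ are decided: both $\{T_\rho(z)<\infty\}$ and the side event $A^i(z)$ depend on the entire future of $\eta$ after $\tau_r(z)$. The one-point estimate \eqref{E:greensLR} does give you $\mathbb{P}(T_\rho(z)<\infty, A^i(z)\mid\mathcal{F}_{\tau_r(z)})$ as a function of the geometry at that time, but it does not decouple the $z$-events from the $w$-events, because $T_\rho(z)<\infty$, $A^i(z)$, $T_{\rho'}(w)<\infty$, $A^i(w)$ are all functionals of the \emph{same} future curve and are correlated through it. You acknowledge this obstacle at the end, but the proposed fix --- ``propagate the one-point estimate through two successive conditionings'' --- is exactly what fails: there is no stopping time at which the $z$-events are settled while the $w$-events are still fresh, so your sharp two-point estimate cannot be obtained by the route you describe.

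The paper resolves this by introducing a $\sigma$-measurable surrogate $\tilde{Q}^i_{r,\delta}(z)$, where $\sigma$ is a stopping time shortly after $\tau_r(z)$: the side indicator $A^i(z)$ is replaced by an approximation $\tilde A^i(z)$ read off from which side of $\eta[0,\sigma]$ the ball $B(z,e^{-(4/5)r})$ lies, and the conformal-radius events are evaluated for $\eta_\sigma$ rather than $\eta$. A geometric dichotomy $\mathcal{U}^s_r/\mathcal{B}^s_r$ (whether $w$ lies in the unbounded or bounded component of $(\HH\setminus\eta_{\tau_r(z)})\setminus l$) ensures that on $\mathcal{U}^s_r$ the surrogate $\tilde Q(z)$ is genuinely frozen before the curve approaches $w$; then one can legitimately condition on $\eta_\sigma$ and apply \eqref{E:greensLR} for the $w$-part to bound $\mathbb{E}(\tilde{Q}(z)\,\mathcal{U}^s_r\,\mathbb{E}(Q(w)\mathbf{1}_{\tau_r(w)<\infty}\mid\eta_\sigma))$, using that $\mathbb{E}(Q(w)\mid\eta_\sigma)$ is already $O(e^{-\alpha(1-s)r})$ small. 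The bad event $\mathcal{B}^s_r$ is handled by a separate estimate from \cite{LawlerRezaei}. What remains --- and is the bulk of the paper's proof --- is to control the error $Q^i_{r,\delta}(z)-\tilde{Q}^i_{r,\delta}(z)$: this requires showing the curve does not revisit $B(z,e^{-(4/5)r})$ after $\sigma$ (so the conformal radius and the side are essentially frozen), together with a separate treatment of the thin event $G^c$ where $\CR(z,\HH\setminus\eta_\sigma)$ lands near the threshold $e^{-(r+\delta)}$. None of this machinery is present in your sketch.
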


Note that given Lemma \ref{L:secondmomentred}, choosing $\hat{u}$ smaller than $u$ if necessary so that $\hat{u}(2(2-d)+a)\le a s/2$, ($a$ as in Lemma \ref{L:upperbounds}) we have that for all $|z-w|\ge e^{-\hat{u}r}\ge e^{-ur}$, 
\[ \mathbb{E}\left(Q_{r,\delta}^q(z)Q_{r,\delta}^q(w)\I{\tau_{r}(z)\le \tau_{r}(w)<\infty}\right) \le ce^{-\beta r} +\mathbb{E}\left(Q_{r,\delta}^q(z)Q_{r,\delta}^q(w)\I{ \tau_{sr}(w) \le \tau_{r}(z)\le \tau_{r}(w)<\infty}\right)  \]
where by Lemma \ref{L:upperbounds} \eqref{E:tau2},
\begin{align*} \mathbb{E}\left(Q_{r,\delta}^q(z)Q_{r,\delta}^q(w)\I{\tau_{sr}(w)\le \tau_{r}(z)\le  \tau_{r}(w)<\infty}\right) & \le \hat{c} e^{2r(2-d)} \mathbb{P}\left(\tau_{sr}(w)\le \tau_{r}(z)\le \tau_{r}(w)\right) \\
& \le \hat{c} e^{2r(2-d)}e^{2(r-\hat{u}r)(d-2)}e^{-a(sr-\hat{u}r)} \\
& \le \hat{c} e^{\hat{u}r(2(2-d)+a)} e^{-a sr}\le \hat{c} e^{-a sr/2}.
\end{align*}
Above the constant $\hat c$ may vary from line to line but does not depend on $z,w$ with $\Im(z),\Im(w)\ge 1$ such that $|z-w|\ge e^{-\hat u r}$. By symmetry and since everything is zero unless $\tau_{r}(z), \tau_{r}(w)<\infty$, we obtain the following.

\begin{cor}\label{C:secondmoment}
$\exists$ $\beta,u>0$ and $c<\infty$ such that for all $r>0$, and all $z,w\in \HH$ with $\Im(z),\Im(w)\ge 1$, $|z-w|\ge e^{-ur}$:
\begin{equation}\label{E:secondmoment}
\mathbb{E}\left(Q_{r,\delta}^q(z)Q_{r,\delta}^q(w)\right)\le c e^{-r\beta} 
\end{equation}
\end{cor}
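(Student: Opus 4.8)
\noindent\textbf{Proof proposal for Corollary \ref{C:secondmoment}.} The plan is to reassemble the estimate of Lemma \ref{L:secondmomentred} with the crude two-point bound \eqref{E:tau2}, after splitting according to which of the two points the curve approaches first. I would first record the elementary pointwise bound
\[
|Q_{r,\delta}^i(z)|\le e^{\delta(2-d)}e^{r(2-d)}\I{\CR(z,\HH\setminus\eta)<e^{-r}},
\]
(using $0<2-d<1$ and $\{\CR(z,\HH\setminus\eta)<e^{-(r+\delta)}\}\subseteq\{\CR(z,\HH\setminus\eta)<e^{-r}\}$), so that $Q_{r,\delta}^i(z)Q_{r,\delta}^i(w)$ vanishes off $\{T_r(z)<\infty,\,T_r(w)<\infty\}\subseteq\{\tau_r(z)<\infty,\,\tau_r(w)<\infty\}$, and on that event $|Q_{r,\delta}^i(z)Q_{r,\delta}^i(w)|\le C_\delta\,e^{2r(2-d)}$ for a constant $C_\delta$ depending only on $\delta$ and $\kappa$.

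Next, using that $\{\tau_r(z)<\infty,\tau_r(w)<\infty\}$ is, up to a $\mathbb{P}$-null set, the disjoint union of $\{\tau_r(z)<\tau_r(w)<\infty\}$ and $\{\tau_r(w)<\tau_r(z)<\infty\}$ (the event $\{\tau_r(z)=\tau_r(w)\}$ being null, since $\SLE$ does not hit fixed points), and since the hypotheses on $(z,w)$ are symmetric, it suffices to bound the contribution of $\{\tau_r(z)<\tau_r(w)<\infty\}$. On this event I distinguish the two cases $\{\tau_r(z)\le\tau_{sr}(w)\}$ and $\{\tau_{sr}(w)<\tau_r(z)\}$, where $s\in(0,1)$ is the constant from Lemma \ref{L:secondmomentred}; note that $\tau_{sr}(w)<\tau_r(w)$ always, since $\eta$ is continuous and $s<1$. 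In the first case the relevant event is exactly $\{\tau_r(z)\le\tau_{sr}(w)<\tau_r(w)<\infty\}$, so Lemma \ref{L:secondmomentred} bounds its contribution by $c\,e^{-\beta r}$. In the second case the event is contained, again up to a null set, in $\{\tau_{sr}(w)<\tau_r(z)<\tau_r(w)<\infty\}$, and by the pointwise bound above together with \eqref{E:tau2}, applied with its parameter ``$u$'' taken to be $\hat u r$ where $\hat u\in(0,1)$ is chosen small enough that $\hat u<u$, $\hat u<s$ and $\hat u\bigl(2(2-d)+a\bigr)\le as/2$, its contribution is at most
\[
C_\delta\,e^{2r(2-d)}\cdot c\,e^{2(1-\hat u)r(d-2)}e^{-a(s-\hat u)r}=C_\delta c\,e^{\hat u r(2(2-d)+a)}e^{-asr}\le C_\delta c\,e^{-asr/2}.
\]
Summing the (at most four) resulting contributions and taking the constant of the corollary to be $\hat u$ yields \eqref{E:secondmoment} with $\beta$ replaced by $\min(\beta,as/2)$; for $r$ in a compact set the bound is trivial after enlarging the prefactor, since $|\mathbb{E}[Q_{r,\delta}^i(z)Q_{r,\delta}^i(w)]|\le C_\delta^2 e^{2r(2-d)}$.

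This reduction is routine; the real content lies in Lemma \ref{L:secondmomentred}, which I expect to be the main obstacle. To prove that lemma I would follow the scheme of \cite{LawlerRezaei}: on the event under consideration the curve first enters the ball $B(z,e^{-r})$, at time $\tau_r(z)$, and only later enters $B(w,e^{-r})$, so one conditions on $\mathcal{F}_{\tau_r(z)}$ (more precisely on $\mathcal{F}_{T_{r+\delta}(z)}$), on which $Q_{r,\delta}^i(z)$ is measurable and of size $\le C_\delta e^{r(2-d)}$; one then applies the \emph{sharp} one-point estimate \eqref{E:greensLR} to the remaining piece of the curve, to evaluate the conditional probability that it comes within $e^{-r}$ of $w$ \emph{and} passes on the prescribed side $A^i(w)$, controlling the error terms with the crude two-point bounds of Lemma \ref{L:upperbounds} and the separation hypothesis $|z-w|\ge e^{-ur}$. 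The extra decay $e^{-\beta r}$ beyond the $O(e^{2r(d-2)})$ order should come from the error term $(e^{-r}/\CR(\cdot))^\alpha$ in \eqref{E:greensLR}; the genuinely new ingredient compared with \cite{LawlerRezaei}, who treat the two-sided content, is keeping track of the side events, which is precisely where the exponentially fast convergence of the angular process to the symmetric equilibrium distribution, as in the proof of Lemma \ref{L:onepointmain}, enters.
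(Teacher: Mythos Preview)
Your derivation of the corollary from Lemma \ref{L:secondmomentred} and \eqref{E:tau2} is correct and matches the paper's argument essentially line by line: the same pointwise bound $|Q^i_{r,\delta}|\lesssim e^{r(2-d)}\I{\tau_r<\infty}$, the same split of $\{\tau_r(z)\le\tau_r(w)<\infty\}$ according to whether $\tau_{sr}(w)$ precedes $\tau_r(z)$, the same choice of $\hat u$, and the same symmetrisation in $(z,w)$.

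One caveat on your final paragraph, which sketches the proof of Lemma \ref{L:secondmomentred} itself: the random variable $Q_{r,\delta}^i(z)$ is \emph{not} $\mathcal F_{T_{r+\delta}(z)}$-measurable, because the side indicator $\1_{A^i(z)}$ depends on the full trajectory $\eta([0,\infty))$, not just on the curve up to the moment it comes close to $z$. The paper handles this by introducing a localised surrogate $\tilde Q_{r,\delta}^i(z)$, in which both the conformal-radius event and $A^i(z)$ are replaced by versions determined at a carefully chosen stopping time $\sigma>\tau_r(z)$ (the first exit from a mesoscopic ball around $z$), and then controls the discrepancy $Q-\tilde Q$ using the small probability that $\eta$ revisits a neighbourhood of $z$ after $\sigma$. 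This localisation of the side event is exactly the extra ingredient beyond \cite{LawlerRezaei}, so your conditioning step would need to be modified accordingly.
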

\rev{Using the bound \eqref{E:tau3} from Lemma \ref{L:upperbounds} we also have that for some $c<\infty$, it holds for all $\beta'>0$ and all $z,w\in \HH$ with $\Im(z),\Im(w)>1$ and $|z-w|\le e^{-ur}$, that $$\mathbb{E}\left(Q_{r,\delta}^q(z)Q_{r,\delta}^q(w)\right) \le c|z-w|^{d-2}\le ce^{-\beta' r}|z-w|^{d-({\beta'}/{u})-2}.$$ Thus by choosing $\beta'<ud\wedge \beta$ we get the existence of $\beta'>0, a>0$ and $c<\infty$, such that for all $z,w\in \HH$ with $\Im(z),\Im(w)>1$ and $z,w\in \Gamma$ for some $\Gamma$ (so $|z-w|\le 1$) 
\begin{equation}\label{E:secondmomentextended}
\mathbb{E}\left(Q_{r,\delta}^q(z)Q_{r,\delta}^q(w)\right)\le c e^{-r\beta'}|z-w|^{a-2}.
\end{equation}
}

\begin{proof}[Proof of Proposition \ref{P:confminkone} given Corollary \ref{C:secondmoment}]
Without loss of generality we fix $q=L$. 
By scaling, we may assume that $\inf\{\Im(z): z\in \Gamma\}\ge 1$.  
We then argue in exactly the same way as \cite{LawlerRezaei}. 
In summary:
\begin{itemize}
    \item  Fix $\delta\in (0,1/10)$. \rev{Integrating \eqref{E:secondmomentextended} over $z$ and $w$ implies that 
    $$0\le \mathbb{E}\left(|J^L_r-J^L_{r+\delta}|^2\right)=\mathbb{E}\left(\iint\nolimits_{\Gamma\times \Gamma} Q^L_{r,\delta}(z)Q^L_{r,\delta}(w) \, dz dw\right)\le ce^{-\beta' r}. $$}
  \rev{(Note that we automatically have a lower bound by $0$ on the left-hand side so we only need the upper bound for $\mathbb{E}(Q_{r,\delta}^q(z)Q_{r,\delta}^q(w))$ and do not need to take absolute values.)}
    In particular, 
    \[
    \sum_{n\ge 1} \mathbb{P}(|J^L_{n\delta}-J^L_{n\delta+\delta}|>e^{-\beta' n \delta/4})<\infty
    \]
    and therefore $J^L_{n\delta}$ is a Cauchy sequence with probability one. This implies that on an event of probability one, the sequence $(J^L_{n\delta})_{n\ge 0}$ converges for all $\delta=2^{-m}$ with $m\ge 4$.
    \item Let $J^L_\infty$ be the almost sure limit; note that this has to coincide for different $m$. Then
    \[ 
    e^{-\delta(\rev{2-d})}J^L_\infty \le \liminf_{r\to \infty} J^L_r \le \limsup_{r\to \infty} J^L_r \le e^{\delta(2-d)}J^L_\infty.
    \] 
    Since this holds for all $\delta=2^{-m}$ we see that $J^L_r\to J^L_\infty$ almost surely as $r\to \infty$.
\end{itemize}
The convergence holds in $L^2(\mathbb{P})$ since fixing some arbitrary $\delta\in (0,1/10)$, we have $\mathbb{E}(|J^L_{r}-J^L_\infty|^2)\leq (\sum_{m\ge 0} e^{-\beta' (r+m\delta)/2})^2$ which converges to $0$ as $n\to \infty$. The claim about the expectation follows from \eqref{E:greensLR}.
\end{proof}

\vspace{.5cm}
To show Proposition \ref{P:confminkone}, it thus remains to prove Lemma \ref{L:secondmomentred}, since this implies Corollary \ref{C:secondmoment}.

\begin{proof}[Proof of Lemma \ref{L:secondmomentred}]
\begin{figure}
\includegraphics[width=\textwidth]{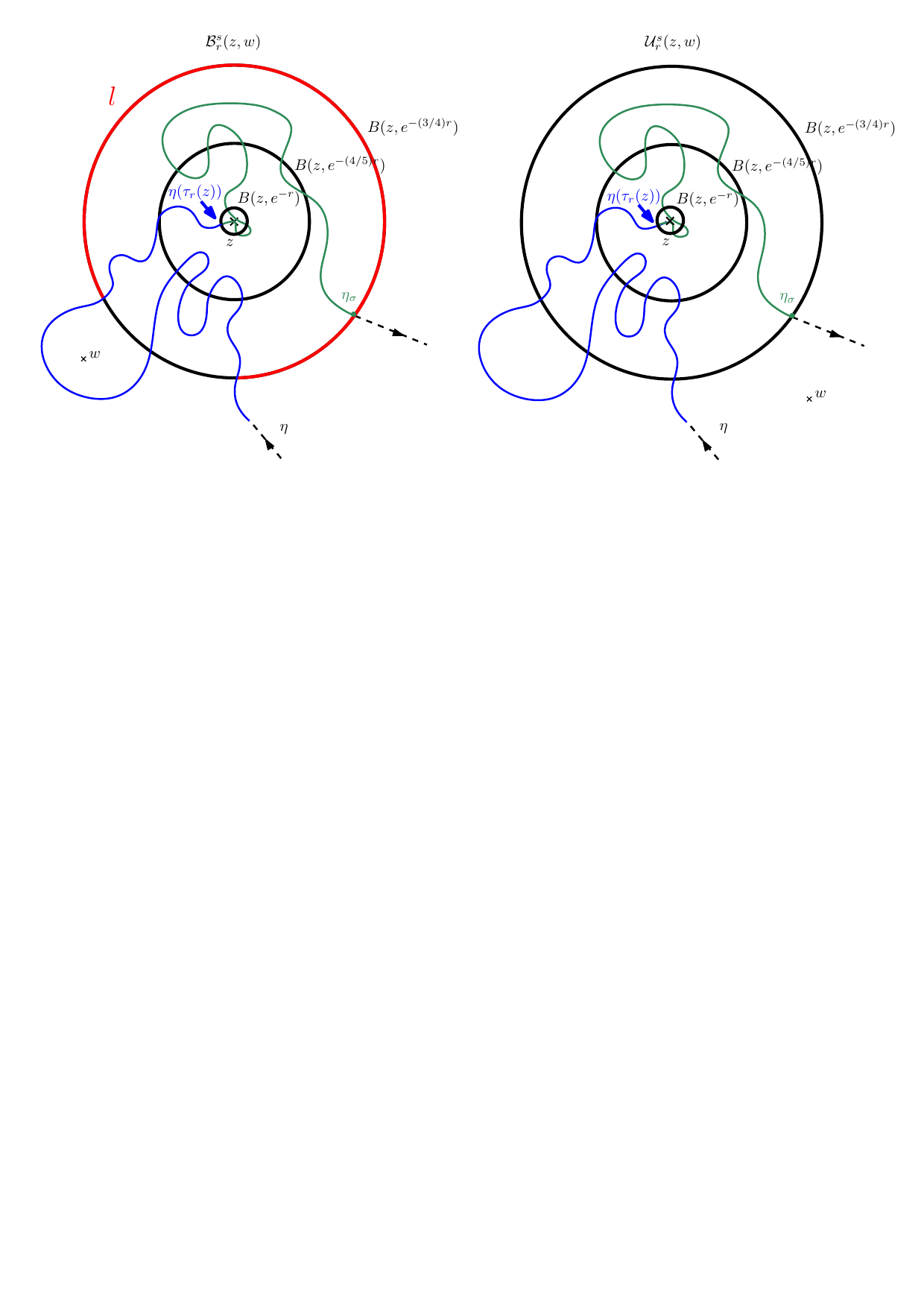}
\caption{\rev{Representation of the notation used in the proof. In blue is the curve $\eta$ run up to the first time it hits the ball of center $z$ and radius $r$, and in green is the curve run from this time until the first time that it hits $l$. The left figure illustrates the event $\mathcal{B}_r^s(z,w)$ while the right illustrates the event $\mathcal{U}_r^s(z,w)$}.}
\label{f.Appendix2}
\end{figure}
As in \cite{LawlerRezaei}, we will prove \eqref{L:secondmomentred} with $p=(\tfrac{1}{\kappa}-\tfrac{1}{8})$,  $u:=p/(3(2-d))\in (0,\tfrac14)$. 
In particular,  $|z-w|\ge e^{-r/4}\ge e^{-r/2}$ if $|z-w|\ge e^{-ur}$. In everything that follows we write $a \lesssim b$ to indicate that for some $c$ not depending on $r$, $\delta\in (0,1/10)$ or $z,w\in \HH$ with $\Im(z),\Im(w)\ge 1$ and $|z-w|\ge e^{-ur}$, we have $a\le c b$.

Following the notation of \cite{LawlerRezaei} \rev{(see Figure \ref{f.Appendix2})}, on the event $\tau_r(z)<\infty$, we let $V$ be the connected component of $(\HH\setminus \eta_{\tau_r(z)})\cap B(z,e^{-(3/4)r})$ containing $z$, and $l$ be the unique connected arc of $\partial B(z,e^{-(3/4)r})$ in 
$\partial V\cap (\HH\setminus \eta_{\tau_r(z)})$ such that $z$ is in the bounded connected component of $(\HH\setminus \eta_{\tau_r(z)})\setminus l$. 

For $s\in (0,1/8)$, let $\mathcal{U}^s_r(z,w)$ be the indicator function of the event that $\tau_r(z)<\infty$, $\tau_r(z)<\tau_{sr}(w)$ and $w$ is in the unbounded component of $(\HH\setminus  \eta_{\tau_r(z)})\setminus l$. Let $\mathcal{B}^s_r(z,w)$ be the indicator function of the event that $\tau_r(z)<\infty$, $\tau_r(z)<\tau_{sr}(w)$ and $w$ is in the bounded component \rev{of $(\HH\setminus  \eta_{\tau_r(z)})\setminus l$}. It is shown in \cite[p1118]{LawlerRezaei}  that for $u$ as defined at the start of the proof,
\begin{equation*}\mathbb{E}\left(\I{\tau_{r}(z)< \tau_{r}(w)<\infty}\mathcal{B}^s_r(z,w)\right) \lesssim e^{- pr}e^{-2(2-d)r},\end{equation*}
and therefore 
\[\mathbb{E}\left(Q_{r,\delta}^q(z)Q_{r,\delta}^q(w)\I{\tau_{r}(z)< \tau_{r}(w)<\infty}\mathcal{B}^s_r(z,w)\right)\lesssim e^{- pr}\]
for $z,w\in \HH$ with $\Im(z),\Im(w)\ge 1$ and $|z-w|\ge e^{-ur}$. Note that this works for any $s\in (0,1/8)$.

Thus we are left to prove that for some $\beta>0$, $s\in (0,1/8)$,
\begin{equation}\label{final}\mathbb{E}\left(Q_{r,\delta}^q(z)Q_{r,\delta}^q(w)\I{\tau_{r}(z)< \tau_{sr}(w)<\tau_{r}(w)<\infty}\mathcal{U}^s_r(z,w)\right) \lesssim e^{- \beta r}.\end{equation} Let \rev{$\sigma$ be the first time after $\tau_r(z)$ that $\eta$ hits $l$, and let} $\tilde{Q}^q_{r,\delta}(z)$ be the same as $Q_{r,\delta}^q(z)$ but stopping the curve $\eta$ at time $\sigma$. \rev{That is,} we replace the event $\CR(z,\HH\setminus \eta)<e^{\rev{-}r}$ with the event $\CR(z,\HH\setminus \eta_\sigma)<e^{-r}$, and the event ${A^q(z)}$ with the event $\tilde{A}^q(z)$ that the connected component of $B(z,e^{-(4/5)r})\cap (\HH\setminus \eta_\sigma))$ containing $z$ is bounded by the left-hand side of $\eta$ for $q=L$ or the right-hand side for $q=R$.\footnote{The centred Loewner map at time $\sigma$ will send this component to an open set in $\HH$ which will contain part of the negative or the positive real line on its boundary (but not both). If it is the negative real line then $\tilde{A}^L(z)$ will be one, otherwise $\tilde{A}^R(z)$ will be one.}
Then we have that the expression on the left of \eqref{final} can be rewritten as the sum of \begin{equation} \label{final1} \mathbb{E}\left((Q_{r,\delta}^q(z)-\tilde{Q}^q_{r,\delta}(z))Q_{r,\delta}^q(w)\I{\tau_r(z)<\tau_{r}(w)<\infty} \mathcal{U}^s_r(z,w)\right) \end{equation} and \begin{equation}\label{final2} \mathbb{E}\left(\tilde{Q}^q_{r,\delta}(z) \mathcal{U}^s_{r}(z,w) \mathbb{E}(Q_{r,\delta}^q(w)\I{\tau_r(w)<\infty}\mid \eta_{\sigma})\right),\end{equation}
(since $\tilde{Q}_{r,\delta}^q(z)$  and $\mathcal{U}_r^s(z,w)$ are $\eta_\sigma$-measurable).

To bound \eqref{final2}, we \rev{use \eqref{E:greensLR} and conformal invariance, which gives that for chordal SLE in a general domain $D$, $\mathbb{P}(\CR(z,D\setminus \eta)<e^{-r})\lesssim (e^{-r}/\CR(z,D))^{(2-d+\alpha)}$}. This together with the observation that on the event $\{\tau_r(z)<\tau_{sr}(w)\}$ we have $\CR(w,\HH\setminus \eta_\sigma)\ge e^{-sr}$, yields
\[
\left|\mathbb{E}\left(Q_{r,\delta}^q(w)\I{\tau_r(w)<\infty}\mid \eta_{\sigma}\right)\right|\le C e^{r(2-d)}e^{(s-1)r(2-d+\alpha)}.
\]
Therefore, combining with the fact that $\mathbb{E}(\mathcal{U}^s_r(z,w))\lesssim  e^{-r(2-d)}$  by \eqref{E:greens}, and choosing $s>0$ small enough, we have 
\begin{equation}\label{final2conc} \mathbb{E}\left(\tilde{Q}_{r,\delta}^q(z) \mathcal{U}^s_{r}(z,w) \mathbb{E}(Q_{r,\delta}^q(w)\I{\tau_r(w)<\infty}\mid \eta_{\sigma})\right)\lesssim e^{-r\alpha/2}. \end{equation}

Let us finally turn to \eqref{final1}. 
\rev{The idea is that in order for $Q_{r,\delta}^q(z)-\tilde{Q}_{r,\delta}^q(z)$ to be non-zero, we either need the curve $\eta$ to enter $B(z,e^{\scriptscriptstyle{-(4/5)r}})$ again after time $\sigma$ (which is shown to be very unlikely in \cite{LawlerRezaei}), or $\CR(z,\HH\setminus \eta_\sigma)$ has to be in a very small interval (which is also extremely unlikely). 

More precisely, for $Q_{r,\delta}(z)\ne \tilde{Q}_{r,\delta}(z)$, we either need that: (i) $\sigma<T_r(z)<\infty$; (ii) $T_r(z)\le \sigma<T_{r+\delta}(z)<\infty$, or (iii) $\tilde{A}^q(z)\ne A^q(z)$. (iii) requires that $\eta$ enters $B(z,e^{\scriptscriptstyle{-(4/5)r}})$ again after time $\sigma$. Furthermore, if $\CR(z,\HH\setminus \eta)\le y(r)\CR(z,\HH\setminus \eta_\sigma)$ then by Koebe's distortion estimates it must be that $\eta$ hits $B(z,(1-y(r))^{-2}\CR(z,\HH\setminus \eta_\sigma))$ again after time $\sigma$. Since $\CR(z,\HH\setminus \eta_\sigma)\le 4e^{-r}$, choosing $y(r)=\exp(-4e^{-r/10})$, we have $(1-y(r))^{-2}\CR(z,\HH\setminus \eta_\sigma)\le e^{-(4/5)r}$. This means that (fixing this definition of $y(r)$ from now on) if $\CR(z,\HH\setminus \eta)\le y(r)\CR(z,\HH\setminus \eta_\sigma)$, then $\eta$ must enter $B(z,e^{\scriptscriptstyle{-(4/5)r}})$ again after time $\sigma$. On the other hand, if $\CR(z,\HH\setminus \eta)>y(r)\CR(z,\HH\setminus \eta_\sigma)$, then in order for (i) or (ii) to occur, we must have $\CR(z,\HH\setminus \eta_\sigma)\in (e^{-r},e^{-r}/y(r)]$ or $\CR(z,\HH\setminus \eta_\sigma)\in (e^{-(r+\delta)},e^{-(r+\delta)}/y(r)]$. 

The upshot of the above argument is the following. Let $\rho$ be the first time after $\sigma$ that $\eta\in B(z,e^{\scriptscriptstyle{-(4/5)r}})$. Then
\begin{equation}\label{QneQ}
     \{Q_{r,\delta}(z)\ne \tilde{Q}_{r,\delta}(z)\}\subset 
 \left\{\rho<\infty\right\} \cup E \cup E' 
 \end{equation}
 where 
 \[ E:=\{\CR(z,\HH\setminus \eta_\sigma)\in (e^{-r},\frac{e^{-r}}{y(r)}]\} \text{ and } E'=\{\CR(z,\HH\setminus \eta_\sigma)\in (e^{-(r+\delta)},\frac{e^{-(r+\delta)}}{y(r)}]\}.\]

We deal with the event $\{\rho<\infty\}$ first. The same argument as in \cite[see eq. (49)]{LawlerRezaei} implies that 
\[ \mathbb{P}(\rho<\infty, \tau_r(w)<\infty | \eta_\sigma)\lesssim e^{r(2-d)}e^{-r\beta} \]
for $\beta>0$ small enough. Hence, using again that $\mathbb{P}(\mathcal{U}_r^s(z,w))\lesssim e^{-r(2-d)}$, we have
\begin{align}\label{final1conc2}
    \left|\mathbb{E}\left((Q_{r,\delta}^q(z)-\tilde{Q}^q_{r,\delta}(z))Q_{r,\delta}^q(w)\I{\rho<\infty,\tau_{r}(w)<\infty} \mathcal{U}^s_r(z,w)\right)\right| & \lesssim e^{2r(2-d)} \mathbb{E}\left(\mathcal{U}_r^s(z,w) \mathbb{P}(\rho<\infty, \tau_r(w)<\infty \mid \eta_\sigma)\right) \nonumber \\
    & \lesssim e^{-\beta r}. 
\end{align}

For the event $E$, we start with the trivial upper bound
\[\left|\mathbb{E}\left((Q_{r,\delta}^q(z)-\tilde{Q}_{r,\delta}^q(z))Q_{r,\delta}^q(w)\I{E,\tau_{r}(w)<\infty} \mathcal{U}^s_r(z,w)\right)\right| \lesssim e^{2r(2-d)} \mathbb{E}\left(\mathbb{P}(\tau_r(w)<\infty \mid \eta_\sigma)\mathbf{1}_{E}\mathcal{U}^s_r(z,w)\right).\]
Arguing as in the proof of \eqref{final2}, we almost surely have $\mathbb{P}(\tau_r(w)<\infty | \eta_\sigma)\lesssim e^{(s-1)r(2-d)}$. On the other hand, we claim that 
\begin{equation}\label{claimGc}\mathbb{E}(\mathbf{1}_{E} \mathcal{U}^s_r(z,w))\lesssim e^{-r(2-d+q)}
\end{equation} for some $q>0$. Indeed, we can bound $\mathbb{E}(\mathbf{1}_{E} \mathcal{U}^s_r(z,w))$ above by
\rev{\[\mathbb{P}(\mathcal{U}_r^s(z,w)\mathbf{1}_{E}\mathbb{P}( \CR(\rev{z},\HH\setminus \eta)\le y(r)\CR(z,\HH\setminus \eta_\sigma)\mid \eta_\sigma))+\mathbb{P}(\CR(\rev{z},\HH\setminus \eta)\in [y(r)e^{-r},e^{-r}/y(r))).\]}
The second term decays as required by \eqref{E:greens}. Moreover, the event in the first term is contained in the event $\{\rho<\infty\}$, and  the same argument as in \cite[Lemma 2.6]{LawlerRezaei} shows that $\mathbb{P}(\rho<\infty \mid \eta_\sigma)\le e^{-br}$ for some $b>0$. Using again that $\mathbb{P}(\mathcal{U}_r^s(z,w))\lesssim e^{-r(2-d)}$, this proves \eqref{claimGc}. We thus conclude that 
\begin{align}
\label{final1conc1}\left|\mathbb{E}\left((Q_{r,\delta}^q(z)-\tilde{Q}_{r,\delta}^q(z))Q_{r,\delta}^q(w)\I{E,\tau_{r}(w)<\infty} \mathcal{U}^s_r(z,w)\right)\right| 
& \lesssim e^{2r(2-d)} e^{(s-1)r(2-d)}e^{-r(2-d)}e^{-qr} \\ & \lesssim e^{-\beta r} \nonumber
\end{align}
for $s>0$ and $\beta>0$ small enough.
The same argument works replacing $E$ with the event $E'$, and we get an analogous bound to \eqref{final1conc1}. Putting this together with \eqref{final1conc1} and \eqref{final1conc2}, and using the observation \eqref{QneQ}, yields \eqref{final1}.}

\end{proof}

\subsection{Proof of Theorem \ref{T:confminkone}}

The proof of Theorem \ref{T:confminkone} given Proposition \ref{P:confminkone} follows standard arguments, almost identical to those in \cite{LawlerRezaei}, so we keep this section brief and mostly make use of the work already done in \cite{LawlerRezaei}.

\begin{lemma}\label{lem:weakconv}
For $q=L,R$, $m_\eta^{q,u}$ converges a.s.\,with respect to the vague topology of measures on $\overline{\HH}$. The limiting measure $m_\eta^q$ satisfies $m_\eta^q(\Gamma)=m_\eta^q(\mathrm{int}(\Gamma))=J_i(\Gamma)$ a.s.\,for all $\Gamma\in \mathcal{Q}_+$ where $J_i(\Gamma)$ is the almost sure limit from Proposition \ref{P:confminkone}. Moreover, for any $K>0$, $m_\eta^q([-K,K]+i[0,2^{-n}])\to 0$ a.s.\,\rev{as $n\to \infty$.} In other words, the limit measure $m_\eta^q$ gives no mass to the real line.
\end{lemma}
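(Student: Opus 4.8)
The plan is to deduce the lemma from Proposition~\ref{P:confminkone}, following closely the scheme of \cite{LawlerRezaei}. Since $\mathcal{Q}_+$ is countable, Proposition~\ref{P:confminkone} provides a single event of probability one on which, for every $\Gamma\in\mathcal{Q}_+$ and $i=L,R$, the limit $J^i(\Gamma)=\lim_{r\to\infty}J^i_r(\Gamma)=\lim_{r\to\infty}m^{i,e^{-r}}_\eta(\Gamma)$ exists and $\mathbb{E}(J^i(\Gamma))=\tfrac12\int_\Gamma G(z)\,dz$. First I would upgrade this to convergence along all $u\to 0$ (not only $u=e^{-r}$): for $u\in[e^{-(r+\delta)},e^{-r}]$, since $u\mapsto u^{-(2-d)}$ is decreasing (as $2-d>0$) while $u\mapsto\1_{\CR(z,\HH\setminus\eta)<u}$ is increasing, one has the pointwise sandwich
\[
e^{-\delta(2-d)}J^i_{r+\delta}(z)\ \le\ u^{-(2-d)}\1_{\CR(z,\HH\setminus\eta)<u}\1_{A^i(z)}\ \le\ e^{\delta(2-d)}J^i_r(z),
\]
hence $e^{-\delta(2-d)}J^i_{r+\delta}(\Gamma)\le m^{i,u}_\eta(\Gamma)\le e^{\delta(2-d)}J^i_r(\Gamma)$ for all $\Gamma\in\mathcal{Q}_+$. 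Letting $u\to 0$ (so $r\to\infty$) and then $\delta\downarrow 0$ yields $m^{i,u}_\eta(\Gamma)\to J^i(\Gamma)$ a.s., simultaneously over all $\Gamma\in\mathcal{Q}_+$ and $i$.

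Next I would promote the set function $\Gamma\mapsto J^i(\Gamma)$ to a Borel measure $m^i_\eta$ on $\overline{\HH}$. Because each $m^{i,u}_\eta$ is absolutely continuous with respect to Lebesgue measure, it is additive over the four dyadic children of a square (their pairwise intersections being Lebesgue-null), and passing to the limit shows $J^i$ is a finitely additive set function on $\mathcal{Q}_+$; it is locally finite since $\mathbb{E}(J^i(\Gamma))=\tfrac12\int_\Gamma G<\infty$ for bounded $\Gamma$ (one checks that $G$ is locally integrable up to $\R$: in polar coordinates $G=c\,\rho^{d-2}(\sin\theta)^{d-3+8/\kappa}$, and $d-3+8/\kappa=\kappa/8+8/\kappa-2\ge\tfrac12>-1$ for $\kappa\in(0,4]$), so $J^i(\Gamma)<\infty$ a.s. The same Carath\'eodory-type extension as in \cite{LawlerRezaei} then produces a locally finite Borel measure $m^i_\eta$ on $\overline{\HH}$ with $m^i_\eta(\mathrm{int}(\Gamma))\le J^i(\Gamma)\le m^i_\eta(\Gamma)$ for $\Gamma\in\mathcal{Q}_+$. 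To see that $m^i_\eta$ charges no boundary of a dyadic square — so that $m^i_\eta(\Gamma)=m^i_\eta(\mathrm{int}(\Gamma))=J^i(\Gamma)$ — I would cover each dyadic line segment by the two rows of level-$n$ dyadic squares abutting it and observe that the expected total $J^i$-mass of these squares is $\tfrac12$ times $\int G$ over a strip shrinking to the segment, hence tends to $0$; since the corresponding masses are decreasing in $n$, their a.s.\ limit is $0$. Finally, the vague convergence $\int f\,dm^{i,u}_\eta\to\int f\,dm^i_\eta$ for $f\in C_c(\overline{\HH})$ follows in the usual way: sandwich $f$ from above and below by step functions constant on level-$n$ dyadic squares, invoke $m^{i,u}_\eta(\Gamma)\to m^i_\eta(\Gamma)$ together with $m^i_\eta(\partial\Gamma)=0$, and use the local uniform bound $\sup_{u\le u_0}m^{i,u}_\eta(K)<\infty$ (valid since $K$ is covered by finitely many dyadic squares on each of which $m^{i,u}_\eta$ has a finite limit as $u\to 0$).

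For the last assertion, $\mathbb{E}\big(m^i_\eta([-K,K]+i[0,2^{-n}])\big)=\tfrac12\int_{[-K,K]\times[0,2^{-n}]}G(z)\,dz\to 0$ as $n\to\infty$ by dominated convergence and the local integrability of $G$ noted above; since these masses are decreasing in $n$ they converge a.s., and the $L^1$ convergence forces the a.s.\ limit to be $0$, i.e.\ $m^i_\eta(\R)=0$. The only genuinely non-routine ingredient here is Proposition~\ref{P:confminkone} itself, which is already established; within the present lemma, the step demanding the most care is the passage from convergence of the masses of individual dyadic squares to vague convergence of the measures, which rests precisely on the fact that $m^i_\eta$ charges no dyadic boundary — everything else is bookkeeping essentially identical to \cite{LawlerRezaei}.
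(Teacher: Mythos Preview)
Your argument is largely sound and close in spirit to the paper's, but there is a genuine gap in the passage to vague convergence on $\overline{\HH}$ (as opposed to on the open half-plane $\HH$). Your step-function sandwich relies on covering $K=\mathrm{supp}(f)$ by \emph{finitely many} squares from $\mathcal{Q}_+$, but every square in $\mathcal{Q}_+$ has strictly positive distance from $\R$, so no such finite cover exists when $K$ meets the real line. Consequently your proof only yields $\int f\,dm_\eta^{i,u}\to\int f\,dm_\eta^i$ for $f\in C_c(\HH)$, not for $f\in C_c(\overline{\HH})$. Your final paragraph shows that the \emph{limit} measure has vanishing mass on thin strips, but this does not prevent mass of the approximating $m_\eta^{i,u}$ from escaping to $\R$ (think of $\delta_{i/n}\to 0$ vaguely on $\HH$ but $\to\delta_0$ on $\overline{\HH}$). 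What is missing is an almost sure statement of the form $\limsup_{u\to 0} m_\eta^{i,u}([-K,K]\times i[0,\delta])\to 0$ as $\delta\to 0$; your first-moment bound $\mathbb{E}[m_\eta^i(\cdot)]=\tfrac12\int G$ concerns the limit, not the approximants, and even a uniform-in-$u$ first moment bound on the strip would only give convergence in probability, not almost surely.

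The paper fills this gap by a different mechanism: via the Koebe quarter theorem one has $\limsup_{u\to 0} m_\eta^{i,u}(A)\le \mathrm{Cont}_d^+(\eta\cap B)$ for any $A\subsetneq B$, and then \cite[Lemma~3.7]{LawlerRezaei} provides the needed almost sure smallness of the upper Minkowski content of $\eta$ in thin strips near $\R$. With that in hand, the paper reduces to proving weak convergence on each $D_m=[-2^m,2^m]\times i[2^{-m},2^m]$, which it does by tightness (the masses $m_\eta^{i,u}(D_m)$ are a.s.\ bounded since finitely many $\mathcal{Q}_+$-squares suffice there) together with Portmanteau and first-moment estimates to identify any subsequential limit via $m(\Gamma)=m(\mathrm{int}(\Gamma))=J^i(\Gamma)$. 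This tightness-plus-identification route is essentially equivalent to your Carath\'eodory-style construction away from $\R$; the substantive missing ingredient in your proposal is the pathwise control near $\R$, which genuinely needs the Koebe comparison and the input from \cite{LawlerRezaei}.
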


\begin{proof}
Let us fix $q=L$ or $R$ for the remainder of the proof.
First, we note that by the K{o}ebe quarter theorem, for any $A\subsetneq B\subset \HH$ we have
\begin{equation}\label{eq:comparecont}
\limsup_{u\to 0} m_\eta^{q,u}(A) \le \limsup_{u\to 0} u^{d-2} \mathrm{Area}(\{z: d(z,\eta \cap B) \le u\})=:\mathrm{Cont}_d^+(\eta \cap B).
\end{equation}
This immediately gives us, by \cite[Lemma 3.7]{LawlerRezaei}, that for any $K>0$, on an event of probability one, $ \limsup_{u\to 0} m_\eta^{q,u}([-K,K]\times i[0,2^{-n}])\le 2^{-n}$ for all $n$ large enough. 
It thus suffices to fix $m\in \mathbb{N}$ and prove a.s.\,weak convergence of $m_\eta^{q,u}$ to a limit $m_\eta^q$ on $D_m=[-2^m,2^m]\times i[2^{-m},2^m]$ such that  $m_\eta^q(\Gamma)=m_\eta^q(\mathrm{int}(\Gamma))=J_i(\Gamma)$ for each $\Gamma \in \mathcal{Q}_+$ with $\Gamma\subset D_m$. If we take an event $\Omega_0$ of probability one where $m_\eta^{q,u}(\Gamma)\to J_i(\Gamma)$ for every $\Gamma\in \mathcal{Q}_+$, it is clear that on $\Omega_0$, $m_\eta^{q,u}(D_m)$ is uniformly bounded and thus $m_\eta^{q,u}|_{D_m}$ is tight in $u$. It is standard and easy to prove using the Portmanteau theorem and first moment estimates (as in, e.g. \cite[Section 6]{Ber}) that any limit $m$ must satisfy $m(\Gamma)=m(\mathrm{int}(\Gamma))=J_i(\Gamma)$ for all $\Gamma$, which identifies the limit uniquely, and hence proves almost sure weak convergence to a measure with the desired property.
\end{proof}

\begin{lemma}\label{lem:measurability}
For $q=L,R$, let $m_\eta^q$ be as constructed in Lemma \ref{lem:weakconv}. Then for bounded $D\subset \HH$ with $d(D,\R)>0$, $m_\eta^q(\eta([0,t])\cap D)$ is $\sigma(\eta(s);s\le t)$-measurable for each $t>0$.
\end{lemma}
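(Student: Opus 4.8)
The plan is to exhibit $m_\eta^i(\eta([0,t])\cap D)$ as an a.s.\ limit of random variables measurable with respect to $\sigma(\eta([0,t]))$; the whole difficulty is that $m_\eta^i$ was built from the \emph{entire} curve $\eta$, through $\CR(z,\HH\setminus\eta)$, so one must show that the mass it places on $\eta([0,t])$ does not feel the future $\eta([t,\infty))$. By Lemma~\ref{lem:weakconv}, $m_\eta^i$ is a.s.\ a Radon measure which charges no dyadic grid line, has no atoms, and is finite on bounded subsets of $\HH$ at positive distance from $\R$; hence by a standard monotone-class argument it suffices to show, for every $\Gamma_0\in\mathcal Q_+$, that $m_\eta^i(\eta([0,t])\cap\Gamma_0^{\circ})$ is $\sigma(\eta([0,t]))$-measurable. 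Fix $\Gamma_0$ and, for $\rho>0$, set $E_\rho:=\eta([0,t])\cap\overline{\Gamma_0}\cap\{d(\cdot,\eta(t))\ge\rho\}$. Since $\eta$ is a simple curve, $\eta([t,\infty))$ meets $\eta([0,t])$ only at $\eta(t)$, and (by continuity and injectivity of $\eta$ together with $\eta(s)\to\infty$ as $s\to\infty$) $\eta([t,\infty))$ does not accumulate on $\eta([0,t))$; hence the compact set $E_\rho$ lies at a.s.\ positive distance $a_\rho:=d(E_\rho,\eta([t,\infty)))>0$ from the closed set $\eta([t,\infty))$. As $m_\eta^i$ has no atoms and no grid-line mass, $m_\eta^i(\eta([0,t])\cap\Gamma_0^{\circ})=\lim_{\rho\downarrow0}m_\eta^i(E_\rho)$, so it is enough to prove that $m_\eta^i(E_\rho)$ is $\sigma(\eta([0,t]))$-measurable for each $\rho>0$.

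Write $\mathcal Q_n^+$ for the squares in $\mathcal Q_+$ of side length $2^{-n}$, and let $W_n$ be the union of those $\Gamma\in\mathcal Q_n^+$ with $\Gamma\subseteq\overline{\Gamma_0}$ and $\Gamma\cap E_\rho\ne\emptyset$. The sets $W_n$ decrease to $E_\rho$, so continuity of $m_\eta^i$ from above, together with $m_\eta^i(\Gamma)=J_i(\Gamma)$ (Lemma~\ref{lem:weakconv}) and the nullity of grid lines, gives
\[
m_\eta^i(E_\rho)\;=\;\lim_{n\to\infty}\ \sum_{\Gamma\in\mathcal Q_n^+:\ \Gamma\subseteq\overline{\Gamma_0},\ \Gamma\cap E_\rho\ne\emptyset}m_\eta^i(\Gamma).
\]
The crux is that once $n$ is large enough that $2^{-n}\sqrt2<a_\rho/2$, every square $\Gamma$ in this sum lies at distance $\ge a_\rho/2$ from $\eta([t,\infty))$, and for such a square
\[
m_\eta^i(\Gamma)\;=\;\lim_{u\to0}\,c^i_{t,u}(\Gamma),\qquad
c^i_{t,u}(\Gamma):=u^{d-2}\int_\Gamma\mathbf 1_{\CR(z,\HH\setminus\eta([0,t]))<u}\,\mathbf 1_{A_t^i(z)}\,dz,
\]
where $A_t^i(z)$ is the event --- depending only on $\eta([0,t])$ --- that $z$ lies on side $i$ of the arc $\eta([0,t])$; in particular this limit exists.

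To prove the displayed identity, note that $m_\eta^{i,u}(\Gamma)-c^i_{t,u}(\Gamma)=u^{d-2}\int_\Gamma\big(\mathbf 1_{\CR(z,\HH\setminus\eta)<u}\mathbf 1_{A^i(z)}-\mathbf 1_{\CR(z,\HH\setminus\eta([0,t]))<u}\mathbf 1_{A_t^i(z)}\big)\,dz$. Since $\CR(z,\HH\setminus\eta)\le\CR(z,\HH\setminus\eta([0,t]))$, the factor $\mathbf 1_{\CR(z,\HH\setminus\eta)<u}$ is supported on $z$ with $d(z,\eta([0,t]))\lesssim u$, hence (for $u$ small, using $d(\Gamma,\eta([t,\infty)))\ge a_\rho/2$) on $z$ at distance $\gg u$ from $\eta([t,\infty))$. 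For such $z$ a Koebe/Beurling distortion estimate gives $(1-C(u/a_\rho)^{1/2})\,\CR(z,\HH\setminus\eta([0,t]))\le\CR(z,\HH\setminus\eta)\le\CR(z,\HH\setminus\eta([0,t]))$, while the side of $z$ with respect to the full curve $\eta$ coincides with its side with respect to the arc $\eta([0,t])$ (the remainder of $\eta$ is too far to separate $z$ from $\eta([0,t])$ locally). Hence the integrand vanishes outside the thin shell $S_u:=\{z\in\Gamma:\CR(z,\HH\setminus\eta([0,t]))\in[u,\,u/(1-C(u/a_\rho)^{1/2}))\}$, so $|m_\eta^{i,u}(\Gamma)-c^i_{t,u}(\Gamma)|\le u^{d-2}|S_u|$; and $u^{d-2}|S_u|\to0$ because conformal-radius shells of relative width $\delta_u\to0$ carry content $o(u^{2-d})$ --- a fact that follows from the sharp one-point and second-moment estimates underlying Proposition~\ref{P:confminkone} (equivalently, from the a.s.\ existence, proved by the same local argument, of the conformal Minkowski content of the arc $\eta([0,t])$ away from its endpoints). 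Since $m_\eta^{i,u}(\Gamma)\to m_\eta^i(\Gamma)$, this yields $c^i_{t,u}(\Gamma)\to m_\eta^i(\Gamma)$, as claimed.

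Finally, for $n$ large the partial sum $\sum_\Gamma m_\eta^i(\Gamma)$ above equals $\sum_\Gamma\big(\lim_{u\to0}c^i_{t,u}(\Gamma)\big)$, a finite sum of $\sigma(\eta([0,t]))$-measurable random variables over the $\sigma(\eta([0,t]))$-measurable index set $\{\Gamma\in\mathcal Q_n^+:\Gamma\subseteq\overline{\Gamma_0},\ \Gamma\cap E_\rho\ne\emptyset\}$, hence is $\sigma(\eta([0,t]))$-measurable; letting $n\to\infty$, so is $m_\eta^i(E_\rho)$, and then $\rho\downarrow0$ and the arbitrariness of $\Gamma_0$ complete the proof. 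I expect the main obstacle to be the identity $m_\eta^i(\Gamma)=\lim_u c^i_{t,u}(\Gamma)$ for squares bounded away from $\eta([t,\infty))$: it rests on the quantitative comparison of the conformal radius in $\HH\setminus\eta$ and in $\HH\setminus\eta([0,t])$, and on the negligibility of thin conformal-radius shells --- both supplied by the Koebe/Beurling machinery and the estimates already developed for Proposition~\ref{P:confminkone}. The remaining ingredients (continuity of measures from above, Radon regularity, absence of atoms) are soft.
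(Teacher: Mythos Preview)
Your approach is essentially the same as the paper's: both localise to dyadic squares close to $\eta([0,t])$ and far from the future of the curve, compare $\CR(z,\HH\setminus\eta)$ with the conformal radius in the domain slit only by the past of the curve, and show that the resulting shell and side discrepancies are negligible. The paper packages this slightly differently: rather than excising a $\rho$-neighbourhood of $\eta(t)$ and working with $\eta([0,t])$ directly, it proves $\sigma(\eta([0,t+\delta]))$-measurability for every $\delta>0$ (and then invokes right-continuity of the filtration). The role of your $\rho$-cutoff --- forcing positive distance between the relevant piece of $\eta([0,t])$ and $\eta([t,\infty))$ --- is played there by the gap $\eta([t,t+\delta])$.

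One point in your write-up is imprecise and worth correcting. You justify the shell bound $u^{d-2}|S_u|\to 0$ by invoking ``the a.s.\ existence \ldots\ of the conformal Minkowski content of the arc $\eta([0,t])$''. That has not been established at this stage (Proposition~\ref{P:confminkone} is for the full curve), and proving it separately would require rerunning the one- and two-point machinery for a different object. The paper avoids this entirely by expressing the shell in terms of the \emph{full} curve. Concretely: by your own Koebe/distortion comparison, on a square $\Gamma$ at distance $\ge a_\rho/2$ from $\eta([t,\infty))$ the ratio $\CR(z,\HH\setminus\eta([0,t]))/\CR(z,\HH\setminus\eta)$ lies in $[1,a_u]$ with $a_u\downarrow 1$, so your shell $S_u$ is contained in $\{z\in\Gamma:\CR(z,\HH\setminus\eta)\in[u/a_u,u)\}$. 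The $u^{d-2}$-mass of the latter is at most $(1-a_u^{d-2})$ times the full (two-sided) conformal Minkowski content of $\Gamma$, which exists by Lemma~\ref{lem:weakconv}; hence it tends to $0$. With this substitution your argument goes through.
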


\begin{proof} Fix $D$ and $t>0$, and let $q=L$ without loss of generality. It suffices to show that $m_\eta^L(\eta([0,t])\cap D)$ is $\eta([0,t+\delta])$-measurable for arbitrary $\delta>0$. Let $V_n$ denote the intersection of $D$ with the union of all $\Gamma\in \mathcal{Q}_+$ of the form $[j2^{-n},(j+1)2^{-n}]\times [k2^{-n},(k+1)2^{-n}]$ with $k\ge 1$ that intersect $\eta([0,t])$. Then
\begin{align*} m_\eta^L(\eta([0,t]))& =\lim_{n\to \infty} m_\eta^L(V_n)+m_\eta^q(\eta([0,t])\cap\{z\in \HH: \Im(z)\le 2^{-n}\} \\
& =\lim_{n\to \infty} \lim_{u\to 0} u^{d-2}\int_{V_n} \1_{\{\CR(z,\HH\setminus \eta)\le u\} \cap A^L(z)} dz 
\end{align*}
where the second equality follows by Lemma \ref{lem:weakconv}. Now, let $a_n:=\sup_{z\in V_n} \frac{\CR(z,\HH\setminus \eta[0,t+\delta])}{\CR(z,\HH\setminus \eta)}$, so by definition of $V_n$ and since $\eta$ is a.s.\,non self-intersecting, $a_n\to 1$ a.s.\,as $n\to \infty$. Also write $A^L_{t+\delta}(z)$ for the event that $\Re(f_{t+\delta}(z))<0$, where $(f_t)_t$ is the centered Loewner flow for $\eta$. Then we also have that with probability one, there exists an $N>0$ such that $A_{t+\delta}^L(z)=A^L(z)$ for all $z\in V_n$ and $n\ge N$. We rewrite
\begin{align*} m_\eta^L(\eta([0,t]))& =\lim_{n\to \infty}   \lim_{u\to 0} \left(u^{d-2}\int_{V_n} \1_{\{\CR(z,\HH\setminus \eta[0,t+\delta])\le u\} \cap A^L_{t+\delta}(z)} dz + u^{d-2}\int_{V_n} \1_{B_u(z)}  dz\right)
\end{align*}
where $B_u(z)$ is the symmetric difference of $\{\CR(z,\HH\setminus \eta[0,t+\delta])\le u\} \cap A^L_{t+\delta}(z)$ and $\{\CR(z,\HH\setminus \eta)\le u\} \cap A^L(z)$. It suffices to prove that $\lim_{n\to \infty} \limsup_{u\to 0} u^{d-2}\int_{V_n}\1_{B_u(z)} dz=0$, since  $V_n$ is $\sigma(\eta(s);s\le t)$-measurable and $\{\CR(z,\HH\setminus \eta[0,t+\delta])\le u\}$ and $A^L_{t+\delta}(z)$ are $\sigma(\eta(s);s\le t+\delta)$-measurable for every $u,n,z$. To show this write
\begin{align*}
\lim_{n\to \infty}\limsup_{u\to 0} u^{d-2}\int_{V_n}\1_{B_u(z)} \le & \lim_{n\to \infty}\limsup_{u\to 0} u^{d-2}\int_{V_n} \1_{\CR(z,\HH\setminus \eta)\in [u/a_n, u]} dz \\ 
& + \lim_{n\to \infty}\limsup_{u\to 0} u^{d-2}\int_{V_n} \1_{\CR(z,\HH\setminus \eta)\le u} \1_{A_{t+\delta}^L(z)\ne A^L(z)} dz. 
\end{align*}
As already observed, with probability one, the $\limsup$ over $u$ in the second term on the right is equal to $0$ for all sufficiently large $n$. By Lemma \ref{lem:weakconv} the $\limsup$ over $u$ in the first term is less than or equal to $(1-a_n^{d-2})(m_\eta^L(V_n)+m_\eta^R(V_n))$ for each $n$. Since $a_n\to 1$ a.s.\,as $n\to \infty$, this completes the proof.
\end{proof}

\begin{lemma}\label{lem:cts}
In the set-up of Lemma \ref{lem:measurability}, $m_\eta^q(\eta([0,t])\cap D)$ is a.s.\,an increasing continuous process in $t$.
\end{lemma}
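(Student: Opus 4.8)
The plan is to check monotonicity, right-continuity and left-continuity of $t\mapsto m_\eta^i(\eta([0,t])\cap D)$ separately, the real content being an atomlessness statement for $m_\eta^i$. Monotonicity is immediate, since $m_\eta^i$ is a positive measure and $\eta([0,s])\subseteq\eta([0,t])$ for $s\le t$. For right-continuity I would use that $\eta$ is almost surely a continuous curve, so that $\eta([0,s])$ decreases to $\bigcap_{s>t}\eta([0,s])=\eta([0,t])$ as $s\downarrow t$ (any accumulation point of $\eta(u_n)$ with $u_n\downarrow t$ lies in $\eta([0,t])$), hence $\eta([0,s])\cap D\downarrow\eta([0,t])\cap D$; since $m_\eta^i$ is a Radon measure by Lemma \ref{lem:weakconv} and $\overline{D}$ is compact, continuity from above of the measure gives $m_\eta^i(\eta([0,s])\cap D)\to m_\eta^i(\eta([0,t])\cap D)$. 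For left-continuity, as $s\uparrow t$ one has $\eta([0,s])\cap D\uparrow\eta([0,t))\cap D$, so continuity from below gives $m_\eta^i(\eta([0,s])\cap D)\uparrow m_\eta^i(\eta([0,t))\cap D)$; since $\eta([0,t])\setminus\eta([0,t))\subseteq\{\eta(t)\}$, the jump at $t$ is at most $m_\eta^i(\{\eta(t)\})$. Thus the whole lemma follows once one shows that, almost surely, $m_\eta^i$ has no atoms: this, together with the previous two points and monotonicity, exhibits an a.s.\ continuous increasing process. This is the one step requiring a quantitative input.

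To prove $m_\eta^i$ is a.s.\ atomless I would first reduce, using the scale invariance of $\SLE_\kappa$ (under which $m_\eta^i$ transforms covariantly: directly from the defining limit, $m^i_{\lambda\eta}(\lambda A)=\lambda^{d} m^i_\eta(A)$ for $\lambda>0$, and $\lambda\eta\overset{(d)}{=}\eta$), to showing that for each fixed $R>1$, almost surely $m_\eta^i$ has no atom in $Q_R:=(-R,R)\times(1,R)$. Taking a union over $R\in\N$ gives atomlessness on $\{\Im>1\}$, and rescaling then yields it throughout $\HH$ (while $m_\eta^i(\R)=0$ by Lemma \ref{lem:weakconv}). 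On $Q_R$ I would run the standard second-moment / Borel--Cantelli argument over dyadic squares. For $S\in\mathcal{Q}_+$ of level $n\ge 1$ with $S\subseteq Q_R$, Lemma \ref{lem:weakconv} gives $m_\eta^i(\overline{S})=J^i(S)$, and Proposition \ref{P:confminkone} gives $J^i_r(S)\to J^i(S)$ in $L^2(\mathbb{P})$, where $J^i_r(S)=e^{r(2-d)}\int_S\mathbf{1}_{\CR(z,\HH\setminus\eta)<e^{-r}}\mathbf{1}_{A^i(z)}\,dz$. Since $\CR(z,\HH\setminus\eta)<e^{-r}$ forces $d(z,\eta)<e^{-r}$ by the Koebe quarter theorem, for $z,w\in S$ (so $\Im(z),\Im(w)\ge 1$ and $|z-w|\le 1$) Lemma \ref{L:upperbounds}\eqref{E:tau3} yields $\mathbb{P}(\CR(z,\HH\setminus\eta)<e^{-r},\,\CR(w,\HH\setminus\eta)<e^{-r})\le ce^{2r(d-2)}|z-w|^{d-2}$, so that
\[
\mathbb{E}\big[(J^i_r(S))^2\big]\le e^{2r(2-d)}\iint_{S\times S}\mathbb{P}(\tau_r(z)<\infty,\,\tau_r(w)<\infty)\,dz\,dw\le c\iint_{S\times S}|z-w|^{d-2}\,dz\,dw\le c'\,2^{-n(d+2)},
\]
where the last two bounds use that $d>0$ (so $|z-w|^{d-2}$ is locally integrable) together with scaling. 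Letting $r\to\infty$ gives $\mathbb{E}[m_\eta^i(\overline{S})^2]\le c'2^{-n(d+2)}$.

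Summing Chebyshev's inequality over the at most $c_R 2^{2n}$ level-$n$ dyadic squares inside $Q_R$,
\[
\sum_{n\ge 1}\big(c_R 2^{2n}\big)\,a^{-2}c'\,2^{-n(d+2)}\ \lesssim\ a^{-2}\sum_{n\ge 1}2^{-nd}\ <\ \infty
\]
because $d=1+\kappa/8>0$; by Borel--Cantelli there is, a.s., an $N$ with $m_\eta^i(\overline{S})<a$ for every dyadic square $S\subseteq Q_R$ of level $\ge N$. Since each $z\in Q_R$ lies, for all sufficiently large levels, in a dyadic square contained in $Q_R$, this forces $m_\eta^i(\{z\})<a$ for all $z\in Q_R$; letting $a=1/k\downarrow 0$ completes the proof of atomlessness, hence of the lemma.

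The main obstacle is this atomlessness step. It is not difficult, but it is where one must invoke the sharp two-point estimate Lemma \ref{L:upperbounds}\eqref{E:tau3} and the $L^2$-identification $m_\eta^i(\overline{S})=J^i(S)$ (Lemma \ref{lem:weakconv} and Proposition \ref{P:confminkone}); with these the computation is exactly the second-moment/Borel--Cantelli scheme of \cite{LawlerRezaei}, and the only additional care needed is the routine reduction, via scale invariance, to a bounded region bounded away from $\R$, so that the hypothesis $\Im\ge1$ of \eqref{E:tau3} is met.
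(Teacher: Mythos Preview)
Your proof is correct and takes a genuinely different route from the paper's. The paper dispatches continuity in one line by combining the comparison \eqref{eq:comparecont} with \cite[Proposition 3.8]{LawlerRezaei}, which supplies a uniform modulus: there is $\alpha>0$ such that a.s.\ $\mathrm{Cont}_d^+(\eta[s,s+2^{-n}])\le 2^{-n\alpha}$ for all $s\le t$ and all large $n$; this immediately gives (H\"older) continuity of $t\mapsto m_\eta^i(\eta([0,t])\cap D)$. You instead reduce continuity to atomlessness of $m_\eta^i$ via continuity from above/below of measures, and then prove atomlessness by a second-moment/Borel--Cantelli argument on dyadic squares using Lemma \ref{L:upperbounds}\eqref{E:tau3}. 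Your approach is more self-contained---it uses only ingredients already established in the paper (Proposition \ref{P:confminkone}, Lemma \ref{lem:weakconv}, Lemma \ref{L:upperbounds}) rather than importing a further result from \cite{LawlerRezaei}---at the cost of being longer and yielding bare continuity rather than a quantitative modulus. It is worth noting that your argument really does produce an a.s.\ continuous \emph{process}, not just a.s.\ continuity at each fixed $t$: once one is on the single full-probability event where $m_\eta^i$ is Radon and atomless and $\eta$ is continuous, the measure-theoretic continuity-from-above/below argument is deterministic and applies simultaneously for every $t$.
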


\begin{proof}
The fact that it is increasing in $t$ follows from the fact that $\eta$ is increasing, and $m_\eta^q$ is a positive measure. The continuity follows by applying \eqref{eq:comparecont} and using the fact that, as proved in \cite[Proposition 3.8]{LawlerRezaei}, there exists $\alpha>0$ such that with probability one for every $t<\infty$, all $s\le t$ and all $n$ sufficiently large$, \mathrm{Cont}_d^+(\eta[s,s+2^{-n}])\le 2^{-n\alpha}$.
\end{proof}

\begin{lemma}\label{lem:minkchar2} For $t>0$ and $D\subset \HH$ with $\rev{\mathrm{dist}}(D,\R)>0$, 
$\mathbb{E}\left(m_\eta^q(D)\, | \, \sigma(\eta(s);s\le t)\right)=m_\eta^q(D\cap\eta([0,t]))+\frac{1}{2} \int_{D\setminus \eta([0,t])} |g_t'(z)|^{2-d} G(g_t(z)-W_t) \, dz$ a.s., where $W$ is the driving function of $\eta$.
\end{lemma}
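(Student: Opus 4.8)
The plan is to establish the identity \eqref{eq:minkchar2} by a first-moment computation in the approximating measures $m_\eta^{i,u}$, passing the conditional expectation through the limit $u\to 0$ by uniform integrability. Fix $t>0$ and a bounded $D\subset\HH$ with $d(D,\R)>0$, and take $i=L$ without loss of generality. Starting from the definition $m_\eta^{i,u}(dz)=u^{-(2-d)}\1_{\CR(z,\HH\setminus\eta)<u}\1_{A^i(z)}\,dz$, I would condition on $\eta([0,t])$. By the Markov property of SLE, conditionally on $\eta([0,t])$ the curve $\eta$ continues as an SLE$_\kappa$ in $\HH\setminus\eta([0,t])$ from $\eta(t)$ to $\infty$, i.e. the image $f_t(\eta([t,\infty)))$ is an independent SLE$_\kappa$ from $0$ to $\infty$ in $\HH$. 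For $z\in D\setminus\eta([0,t])$, the conformal radius $\CR(z,\HH\setminus\eta)$ equals $\CR(z,(\HH\setminus\eta([0,t]))\setminus\eta([t,\infty)))$, and applying the conformal map $f_t=g_t-W_t$ we get $\CR(z,\HH\setminus\eta)=|g_t'(z)|^{-1}\CR(f_t(z),\HH\setminus\tilde\eta)$ where $\tilde\eta$ is the image SLE. Likewise the event $A^L(z)$ corresponds, up to the part of $\eta$ already drawn (which for $z\notin\eta([0,t])$ is irrelevant), to the analogous left-event $\tilde A^L(f_t(z))$ for $\tilde\eta$. For $z\in D\cap\eta([0,t])$, the indicator $\1_{\CR(z,\HH\setminus\eta)<u}$ is eventually $1$, contributing $m_\eta^i(D\cap\eta([0,t]))$ in the limit.

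Thus, splitting $D=(D\cap\eta([0,t]))\cup(D\setminus\eta([0,t]))$ and using Fubini together with the one-point estimate, I would write
\begin{align*}
\mathbb{E}(m_\eta^{i,u}(D)\mid\eta([0,t]))
&= u^{d-2}\int_{D\cap\eta([0,t])}\1_{\CR(z,\HH\setminus\eta)<u}\,dz\\
&\quad+\int_{D\setminus\eta([0,t])}|g_t'(z)|^{2-d}\,(u|g_t'(z)|)^{d-2}\,\mathbb{P}\big(\CR(f_t(z),\HH\setminus\tilde\eta)<u|g_t'(z)|,\,\tilde A^L(f_t(z))\big)\,dz.
\end{align*}
By the sharp one-point estimate of Lemma~\ref{L:onepointmain} (equivalently \eqref{E:greensLR} in the Appendix), the inner probability, multiplied by $(u|g_t'(z)|)^{-(2-d)}$, converges as $u\to0$ to $\tfrac12 G(f_t(z))=\tfrac12 G(g_t(z)-W_t)$, uniformly over $z$ in the compact set $\overline D\setminus\eta([0,t])$ (which keeps $f_t(z)$ bounded away from $\R$ and from $0$, since $d(D,\R)>0$ and $z\notin\eta([0,t])$). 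The first term converges a.s. to $m_\eta^L(D\cap\eta([0,t]))$ by Lemma~\ref{lem:weakconv} (the measure $m_\eta^L$ gives no mass to $\R$ and the squares meeting $\eta([0,t])$ stabilise), and one checks $m_\eta^L(D\cap\eta([0,t]))=m_\eta^L(D)-m_\eta^L(D\setminus\eta([0,t]))$ with the latter carried by the interior of finitely many dyadic squares. Dominated convergence — using the uniform upper bound $\mathbb{P}(\CR(f_t(z),\HH\setminus\tilde\eta)<v)\lesssim v^{2-d}$ valid for $f_t(z)$ with imaginary part bounded below, from \eqref{E:greens} — lets me pass the $z$-integral to the limit, giving $\tfrac12\int_{D\setminus\eta([0,t])}|g_t'(z)|^{2-d}G(g_t(z)-W_t)\,dz$.

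Finally I must upgrade convergence of $\mathbb{E}(m_\eta^{i,u}(D)\mid\eta([0,t]))$ to convergence of $\mathbb{E}(m_\eta^i(D)\mid\eta([0,t]))$: since $m_\eta^{i,u}(D)\to m_\eta^i(D)$ a.s. and the family $(m_\eta^{i,u}(D))_{u\in(0,1)}$ is bounded in $L^2$ (this is exactly the content of Proposition~\ref{P:confminkone} / Corollary~\ref{C:secondmoment}, after summing over the finitely many dyadic squares covering $D$ and using the crude bound $\mathrm{Cont}_d^+$ near $\R$), it is uniformly integrable, so $m_\eta^{i,u}(D)\to m_\eta^i(D)$ in $L^1$ and conditional expectations converge in $L^1$ as well. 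The main obstacle is the uniform-in-$u$ control needed to justify the two limit exchanges — the dominated convergence in the $z$-integral (handled by the upper bound \eqref{E:greens}) and the uniform integrability of $m_\eta^{i,u}(D)$ (handled by the second-moment bound already established). Everything else is the standard Markov-property-plus-conformal-covariance bookkeeping, mirroring \cite{LawlerSheffield,LawlerRezaei}, so I would keep the write-up correspondingly terse.
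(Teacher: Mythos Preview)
Your decomposition has a genuine gap. The first term in your display,
\[
u^{d-2}\int_{D\cap\eta([0,t])}\1_{\CR(z,\HH\setminus\eta)<u}\,dz,
\]
is an ordinary Lebesgue integral over the set $D\cap\eta([0,t])$, which has Lebesgue measure zero (the curve is simple with dimension $<2$). So this term is identically zero for every $u$, and cannot converge to $m_\eta^i(D\cap\eta([0,t]))$, which is typically positive. All the mass of $m_\eta^{i,u}(D)$ --- including the part that in the limit becomes $m_\eta^i(D\cap\eta([0,t]))$ --- sits in your second integral, supported on points \emph{near} but not \emph{on} $\eta([0,t])$.

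This forces the failure of your uniformity claim for the second integral. You assert that the one-point estimate applies ``uniformly over $z$ in the compact set $\overline D\setminus\eta([0,t])$ (which keeps $f_t(z)$ bounded away from $\R$)'', but this is false: for $z\in D\setminus\eta([0,t])$ approaching the curve, $\Im(f_t(z))\to 0$, and Lemma~\ref{L:onepointmain} requires $e^{-r}\lesssim \Im(f_t(z))$ to be applicable. Near the curve the integrand does not converge to $\tfrac12|g_t'(z)|^{2-d}G(f_t(z))$ in any dominated way; instead that portion of the integral carries exactly the missing term $m_\eta^i(D\cap\eta([0,t]))$.

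The paper handles this by first invoking Lemma~\ref{lem:measurability} to reduce to computing $\mathbb{E}(m_\eta^i(D\setminus\eta([0,t]))\,|\,\eta([0,t]))$, and then replacing $\eta([0,t])$ by its dyadic neighbourhood $V_n$: on $D\setminus V_n$ the map $f_t$ genuinely keeps points away from $\R$, so the one-point estimate is uniform; the $L^1$ convergence of $m_\eta^i(D\setminus V_n)$ as $n\to\infty$ (monotone convergence) then closes the argument. Your outline is missing exactly this buffer-zone device.
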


\begin{proof}
Due to Lemma \ref{lem:measurability}, it suffices to prove that 
\[\mathbb{E}\left(m_\eta^q(D\setminus \eta([0,t])\mid \sigma(\eta(s);s\le t)\right)=\frac{1}{2} \int_{D\setminus \eta([0,t])} |g_t'(z)|^{2-d} G(g_t(z)-W_t) \, dz.\]
Recall the definition of $V_n$ from the proof of Lemma \ref{lem:measurability}. We have $m_\eta^q(D\setminus \eta([0,t]))= \lim_{n\to \infty} m_\eta^q(D\setminus V_n)=\lim_{n\to \infty}\lim_{u\to 0} u^{d-2} \int_{D\setminus V_n} \1_{\CR(z,\HH\setminus \eta)<u}\1_{A^q(z)} dz$ where the limits hold almost surely and in $L^1(\mathbb{P})$ (by \ref{P:confminkone} for the limit in $u$ and monotone convergence for the limit in $n$). Since conditioning is a contraction in $L^1(\mathbb{P})$ we have 
\begin{align*} \mathbb{E}\left(m_\eta^q\left(D\setminus \eta([0,t])\right)\mid \sigma(\eta(s);s\le t)\right) & = \lim_{n\to \infty}\lim_{u\to 0} u^{d-2} \int_{D\setminus V_n} \mathbb{P}\left(\{\CR(z,\HH\setminus \eta)<u \}\cap A^q(z) \, | \, \sigma(\eta(s);s\le t)\right) \, dz \\
& = \lim_{n\to \infty}\int_{D\setminus V_n} |g_t'(z)|^{2-d} \frac{1}{2} G(g_t(z)-W_t) \\
& = \frac{1}{2} \int_{D\setminus \eta([0,t])} |g_t'(z)|^{2-d} G(g_t(z)-W_t) 
\end{align*}
where the second line follows from a change of variables, the Markov property of SLE and \eqref{E:greens}.
\end{proof}

\begin{proof}[Proof of Theorem \ref{T:confminkone}]
This follows by combining Lemmas \ref{lem:weakconv}, \ref{lem:measurability}, \ref{lem:cts} and \ref{lem:minkchar2}.
\end{proof}

\section{Continuity results for conditional expectations and measures}

In this section, we discuss some basic properties regarding the convergence of conditional expectations and measures. It is unlikely that these results are new but we provide a self-contained proof of each one.

This first lemma discusses the convergence of conditional expectations. 
\begin{lemma}\label{l.convergence_conditional_expectation}
Assume that $X_n\in \R$ and $Y_n \in \mathcal{Y}$ where $\mathcal{Y}$ is a Polish space are such that
\begin{align*}
X_n \stackrel{L^1}{\to} X, && Y_n \stackrel{\P}{\to} Y && \E{X_n\mid Y_n}\stackrel{\P}{\to} Z,
\end{align*}
then 
$\E{X_n\mid Y_n}$ converges in $L^1$ to $Z$ and 
\begin{align*}
\E{Z \mid Y } = \E{X\mid Y}.
\end{align*}
In particular, if $Z$ is $\sigma(Y)$-measurable, $Z=\E{X\mid Y}$.
\end{lemma}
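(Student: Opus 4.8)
The statement is Lemma~\ref{l.convergence_conditional_expectation}, concerning convergence of conditional expectations: given $x_n \xrightarrow{L^1} x$, $y_n \xrightarrow{\P} y$, and $\E{x_n\mid y_n}\xrightarrow{\P} z$, we want $z = \E{x\mid y}$ together with $L^1$ convergence and the identity $\E{z\mid y} = \E{x\mid y}$. The plan is to test both sides against bounded continuous functions of $y$ and pass to the limit. Concretely, I would fix a bounded continuous $g:Y\to \R$ and compute $\E{g(y_n)\E{x_n\mid y_n}} = \E{g(y_n) x_n}$ by the defining property of conditional expectation.

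\textbf{Key steps.} First I would establish $L^1$ convergence of $\E{x_n\mid y_n}$. This is where the main work lies: from $\E{x_n\mid y_n}\xrightarrow{\P} z$ we only get convergence in probability, so to upgrade to $L^1$ we need uniform integrability of the sequence $\E{x_n\mid y_n}$. This follows because conditioning is an $L^1$-contraction: for any event $A$, $\E{|\E{x_n\mid y_n}|\1_A} \le \E{\E{|x_n|\mid y_n}\1_A}$ is not quite the right bound since $A$ need not be $\sigma(y_n)$-measurable; instead use that $\{x_n\}$ is uniformly integrable (being $L^1$-convergent) and hence, by Jensen/contraction, $\{\E{x_n\mid y_n}\}$ is uniformly integrable as well — precisely, $\sup_n \E{|\E{x_n\mid y_n}|\1_{|\E{x_n\mid y_n}|>M}} \le \sup_n \E{\E{|x_n|\mid y_n}\1_{\E{|x_n|\mid y_n}>M}}$ and the right side tends to $0$ as $M\to\infty$ because $\{\E{|x_n|\mid y_n}\}$ is UI (apply the contraction property to the UI family $\{|x_n|\}$). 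Combined with convergence in probability, this gives $\E{x_n\mid y_n}\xrightarrow{L^1} z$.

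Second, I would identify $z$. For bounded continuous $g$, $\E{g(y_n)\E{x_n\mid y_n}} = \E{g(y_n)x_n}$. On the left, $g(y_n)\to g(y)$ in probability and is bounded, while $\E{x_n\mid y_n}\to z$ in $L^1$, so the product converges in $L^1$ (bounded times $L^1$-convergent, using boundedness of $g$) giving $\E{g(y)z}$. On the right, $g(y_n)\to g(y)$ boundedly in probability and $x_n\to x$ in $L^1$, so $\E{g(y_n)x_n}\to \E{g(y)x}$. Hence $\E{g(y)z} = \E{g(y)x}$ for all bounded continuous $g$. Since $Y$ is Polish, such $g$ determine $\sigma(y)$-measurable integrable functions, so taking conditional expectation given $y$ yields $\E{z\mid y} = \E{x\mid y}$. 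Finally, if $z$ is itself $\sigma(y)$-measurable then $z = \E{z\mid y} = \E{x\mid y}$, which is the last assertion.

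\textbf{Main obstacle.} The only genuinely delicate point is the passage from convergence in probability to $L^1$ convergence of $\E{x_n\mid y_n}$; this rests on showing the family $\{\E{x_n\mid y_n}\}_n$ is uniformly integrable, which I would deduce from uniform integrability of $\{|x_n|\}_n$ (automatic from $L^1$-convergence) via the conditional Jensen inequality $|\E{x_n\mid y_n}| \le \E{|x_n| \mid y_n}$ together with the fact that $\{\E{|x_n|\mid y_n}\}_n$ is uniformly integrable whenever $\{|x_n|\}_n$ is. Everything else is a routine application of the definition of conditional expectation and a limiting argument against test functions $g\in C_b(Y)$.
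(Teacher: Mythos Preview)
Your proposal is correct and follows essentially the same approach as the paper: deduce uniform integrability of $\{\E{x_n\mid y_n}\}$ from that of $\{x_n\}$ to upgrade to $L^1$ convergence, then test against bounded continuous $g(y)$ to obtain $\E{g(y)z}=\E{g(y)x}$ and hence $\E{z\mid y}=\E{x\mid y}$. Your write-up is in fact more detailed than the paper's (which is rather terse on the UI step), but the structure of the argument is identical.
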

\begin{proof}
    As $X_n$ converges in $L^1$, the sequence $(X_n)_{n\in \N}$ is uniformly integrable, thus the sequence $(\E{X_n\mid Y_n})_{n\in \N}$ is also uniformly integrable, and thus $\E{X_n\mid Y_n}$ converges in $L^1$ to $Z$. Furthermore, for any continuous and bounded function $f: \mathcal{Y}\mapsto \R$, we have that
    \begin{align*}
    \E{Xf(Y) } = \lim_{n\to \infty}\E{X_n f(Y_n) } = \lim_{n\to \infty}\E{\E{X_n\mid Y_n} f(Y_n) } = \E{Z f(Y) },
    \end{align*}
    thus $\E{X\mid Y} = \E{Z\mid Y}$. The case where $Z$ is $\sigma(Y)$-measurable follows directly.
\end{proof}

This second lemma gives sufficient condition for when the conditional independence is kept in the limit.
\begin{lemma}\label{l.conditional_independence_limit}
    Assume that you have a sequence of random variables $(X_n,Y_n,Z_n) \in \mathcal{X}\times \mathcal{Y} \times \mathcal{Z}$, where $\mathcal{X}$, $\mathcal{Y}$ and $\mathcal{Z}$ are \rev{P}olish spaces, such that conditionally on $X_n$, $Y_n$ is independent of $Z_n$. Furthermore, assume that for any continuous and bounded function $f:\mathcal{Y}\mapsto \R$ 
    \begin{align*}
    X_n \stackrel{\P}{\to} X, && Y_n \stackrel{\P}{\to} Y,  && Z_n\stackrel{\P}{\to} Z, && \E{f(Y_n)\mid X_n} \stackrel{\P}{\to}  w^Y, 
    \end{align*}
    and  $w^Y$ is $\sigma(X)$-measurable. Then conditionally on $X$, $Z$ and $Y$ are independent.
\end{lemma}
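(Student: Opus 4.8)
The plan is to deduce the conditional independence of $y$ and $z$ given $x$ from the corresponding identity at level $n$, by testing against bounded continuous functions and passing to the limit. Fix bounded continuous functions $f:Y\to\R$, $g:Z\to\R$ and $h:X\to\R$. Using that $y_n$ and $z_n$ are conditionally independent given $x_n$, the tower property, and the fact that $h(x_n)\,\E{f(y_n)\mid x_n}$ is $\sigma(x_n)$-measurable, we can write
\begin{align*}
\E{f(y_n)g(z_n)h(x_n)} &= \E{h(x_n)\,\E{f(y_n)g(z_n)\mid x_n}} = \E{h(x_n)\,\E{f(y_n)\mid x_n}\,\E{g(z_n)\mid x_n}}\\
&= \E{h(x_n)\,g(z_n)\,\E{f(y_n)\mid x_n}}.
\end{align*}
The point of the last rewriting is that it involves only $\E{f(y_n)\mid x_n}$, for which a convergence hypothesis is available, and not $\E{g(z_n)\mid x_n}$.

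Next I would pass to the limit $n\to\infty$ in both sides. On the left, $f(y_n)g(z_n)h(x_n)\to f(y)g(z)h(x)$ in probability by the continuous mapping theorem, and the integrand is bounded by $\|f\|_\infty\|g\|_\infty\|h\|_\infty$, so by bounded convergence $\E{f(y_n)g(z_n)h(x_n)}\to\E{f(y)g(z)h(x)}$. On the right, $\E{f(y_n)\mid x_n}\to w^y$ in probability by assumption (I write $w_f:=w^y$ to record its dependence on $f$) and $|\E{f(y_n)\mid x_n}|\le\|f\|_\infty$, while $h(x_n)g(z_n)\to h(x)g(z)$ in probability; hence the product converges in probability and is bounded, so by bounded convergence again $\E{h(x_n)g(z_n)\E{f(y_n)\mid x_n}}\to\E{h(x)g(z)\,w_f}$. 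This yields
\[
\E{f(y)g(z)h(x)}=\E{w_f\,g(z)\,h(x)}\qquad\text{for all bounded continuous }f,g,h.
\]
Taking $g\equiv 1$, since $w_f$ is $\sigma(x)$-measurable and $\E{f(y)h(x)}=\E{w_f h(x)}$ for all bounded continuous $h$, a standard monotone class argument gives $w_f=\E{f(y)\mid x}$ almost surely. Substituting this back in, and using once more that $w_f\,h(x)=\E{f(y)\mid x}h(x)$ is $\sigma(x)$-measurable, the displayed identity reads $\E{f(y)g(z)h(x)}=\E{\E{f(y)\mid x}\,\E{g(z)\mid x}\,h(x)}$ for all bounded continuous $h$, i.e.\ $\E{f(y)g(z)\mid x}=\E{f(y)\mid x}\,\E{g(z)\mid x}$ almost surely. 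Finally, picking countable families of bounded continuous functions on $Y$ and on $Z$ that are convergence-determining (possible since $Y$ and $Z$ are Polish), this identity holds simultaneously for all pairs from these families off a single null set, and hence for all bounded measurable $f,g$ by approximation; this is exactly the conditional independence of $y$ and $z$ given $x$.

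The argument is essentially routine. The only points requiring a little care are the rewriting in the first display, which is what lets the single convergence hypothesis (on $\E{f(y_n)\mid x_n}$ only) suffice, and the passage from "the identity holds for each fixed triple $(f,g,h)$" to the statement of conditional independence, which needs the countable convergence-determining family in order to avoid taking an uncountable union of null sets. I do not expect any genuine obstacle beyond this bookkeeping.
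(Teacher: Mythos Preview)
Your proof is correct. The strategy is the same as the paper's --- test the conditional independence identity against bounded continuous functions and pass to the limit --- but your execution is tidier in one respect. The paper first asserts that $\E{g(z_n)\mid x_n}$ converges to some $w^z$ and then feeds the product $\E{f(y_n)g(z_n)\mid x_n}=\E{f(y_n)\mid x_n}\,\E{g(z_n)\mid x_n}\to w^y w^z$ into the auxiliary Lemma~\ref{l.convergence_conditional_expectation}; only afterwards is $w^z$ identified with $\E{g(z)\mid x}$ by taking $f\equiv 1$. Your rewriting $\E{f(y_n)g(z_n)h(x_n)}=\E{h(x_n)\,g(z_n)\,\E{f(y_n)\mid x_n}}$ instead leaves $g(z_n)$ unconditioned, so you only ever need the single hypothesised convergence $\E{f(y_n)\mid x_n}\to w_f$ and bounded convergence. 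This sidesteps both the auxiliary lemma and the (not entirely obvious, and not explicitly argued in the paper) claim that $\E{g(z_n)\mid x_n}$ converges. The identification $w_f=\E{f(y)\mid x}$ and the final monotone-class/convergence-determining step are the same in both arguments.
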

\begin{proof}
    We start by noting that for any continuous and bounded function $g:\mathcal{Z}\mapsto \R$, there is $w^Z$ such that $ \E{g(Z_n)\mid X_n}$ converges to $ w^Z$ and  $\E{f(Y_n)g(Z_n)\mid X_n}$  to $w^Yw^Z$, both in $L^1$. Using Lemma \ref{l.convergence_conditional_expectation} we have that $w^Y=\E{\rev{f}(Y)\mid Z}$ and 
    \begin{align*}
    \E{f(Y)g(Z)\mid X } = \E{ w^Y W^Z \mid X} = w^Y \E{ w^Z \mid X}= \E{f(Y)\mid X} \E{w^Z \mid X }.
    \end{align*}
    We conclude by noting that $
    \E{w^Z\mid X}=\E{g(Z)\mid X}$ as one can take $f=1$.
\end{proof}

This third lemma gives conditions for having convergence of the measure on a set when both the measures and the sets are converging.
\begin{lemma}\label{l:convergingmeasuresandsets}
Suppose that $\mathfrak{m}_n$ is a sequence of random measures that converge almost surely to a measure $\mathfrak{m}$ and that $A_n$ is a sequence of random closed sets that converge to $A$ almost surely in the Hausdorff topology. Suppose further that  for any $a>0$ 
\[\sup_n\P(\mathfrak{m}_n((A_n)_\eps)-\mathfrak{m}_n(A_n)\ge a)\to 0\] as $\eps\to 0$
where for a closed set $K$, $K_\eps$ denotes the set $\{z: d(z,K)<\eps\}$. \rev{Then $\mathfrak{m}_n(A_n)\to \mathfrak{m}(A)$ almost surely as $n\to \infty$.}
\end{lemma}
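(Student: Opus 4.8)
The conclusion to be established is that $\mu_n(A_n)\to\mu(A)$ in probability (equivalently, along every subsequence there is a further subsequence on which the convergence holds almost surely, which is what the applications use). The plan is to split the argument into an almost sure upper bound $\limsup_n\mu_n(A_n)\le\mu(A)$, which does not use the separation hypothesis at all, and a lower bound in probability, which is where that hypothesis enters. Throughout I work on the almost sure event $\Omega_0$ on which $\mu_n\to\mu$ vaguely and $A_n\to A$ in the Hausdorff metric; since $A$ is compact and $d_H(A_n,A)\to0$, all the $A_n$ (for $n$ large) and $A$ lie in a fixed compact set, and $\mu$ is locally finite, so the Portmanteau theorem for vague convergence applies with no integrability issue, both in its compact-set form ($\limsup_n\mu_n(K)\le\mu(K)$) and its open-set form ($\liminf_n\mu_n(U)\ge\mu(U)$).

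\emph{Upper bound.} Fix $\eps>0$ and set $\bar A_\eps:=\{z:d(z,A)\le\eps\}$, a compact set. One half of Hausdorff convergence gives $A_n\subseteq\bar A_\eps$ for all $n$ large on $\Omega_0$, whence $\limsup_n\mu_n(A_n)\le\limsup_n\mu_n(\bar A_\eps)\le\mu(\bar A_\eps)$. Since $\bar A_\eps\downarrow A$ as $\eps\downarrow0$ and $\mu$ is finite on $\bar A_{\eps_0}$, continuity of $\mu$ from above yields $\limsup_n\mu_n(A_n)\le\mu(A)$ a.s. Applying dominated convergence to the bounded sequence $\mathbf{1}\{\mu_n(A_n)>\mu(A)+a\}$, which tends to $0$ a.s., gives $\P(\mu_n(A_n)>\mu(A)+a)\to0$ for every $a>0$.

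\emph{Lower bound.} The geometric point is that for fixed $\eps>0$ and $n$ large on $\Omega_0$ one has $A_{\eps/2}\subseteq(A_n)_\eps$: if $d(z,A)<\eps/2$, choose $a\in A$ with $d(z,a)<\eps/2$, and the other half of Hausdorff convergence gives $d(a,A_n)<\eps/2$, so $d(z,A_n)<\eps$. Hence $\liminf_n\mu_n((A_n)_\eps)\ge\liminf_n\mu_n(A_{\eps/2})\ge\mu(A_{\eps/2})\ge\mu(A)$ a.s., using the open-set Portmanteau inequality for the open set $A_{\eps/2}$; as before, this forces $\P(\mu_n((A_n)_\eps)<\mu(A)-a/2)\to0$ for every $a>0$. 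Now, for fixed $a,\eps>0$,
\[\P(\mu_n(A_n)<\mu(A)-a)\le\P(\mu_n((A_n)_\eps)<\mu(A)-a/2)+\P(\mu_n((A_n)_\eps)-\mu_n(A_n)\ge a/2),\]
and the second term is bounded by $\sup_m\P(\mu_m((A_m)_\eps)-\mu_m(A_m)\ge a/2)$. Letting $n\to\infty$ kills the first term, and then letting $\eps\to0$ kills the (uniform-in-$n$) bound on the second by the hypothesis; hence $\P(\mu_n(A_n)<\mu(A)-a)\to0$. Combined with the upper bound, $\mu_n(A_n)\to\mu(A)$ in probability.

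The only genuinely delicate point is matching each Portmanteau inequality to the right type of set — a \emph{compact} set sandwiching $A_n$ from above, an \emph{open} set sandwiched inside the collars $(A_n)_\eps$ from below — and it is the factor-of-two shrinkage in the radius, supplied by the two halves of Hausdorff convergence, that makes the open inclusion $A_{\eps/2}\subseteq(A_n)_\eps$ hold uniformly for large $n$. The separation hypothesis is invoked exactly once, to pass from the easy lower bound on the mass of the $\eps$-collar $(A_n)_\eps$ to a lower bound on the mass of $A_n$ itself, uniformly in $n$; this is precisely the control that vague convergence of $\mu_n$ cannot provide, since mass could in principle concentrate in the thin collar $(A_n)_\eps\setminus A_n$.
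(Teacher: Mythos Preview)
Your proof is correct and follows essentially the same two-sided sandwich strategy as the paper: an almost sure upper bound via $A_n\subseteq \bar A_\eps$ and Portmanteau for closed sets, and a lower bound via an open collar around $A_n$ combined with the separation hypothesis. Your packaging with a fixed $\eps$ (and the explicit inclusion $A_{\eps/2}\subseteq (A_n)_\eps$) is in fact slightly more careful than the paper's use of a random $\eps_n\searrow 0$, since it makes transparent which Portmanteau inequality is being invoked at each step.
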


\begin{proof}
First observe that for any $\epsilon$, we have $A_n\subset A_\eps$ eventually. As a consequence, since $A$ is closed, we have that
\begin{align*}
\limsup_{n\to \infty} \mathfrak{m}_n(A_n)\leq \mathfrak{m}(A).
\end{align*}
The analogous argument, using that there exists a random sequence $\epsilon_n\searrow 0$ almost surely such that $A\subseteq (A_n)_{\eps_n}$ for all $n$, implies that
\begin{align*}
\mathfrak{m}(A) \leq \liminf_{n\to \infty} \mathfrak{m}_n((A_n)_{\eps_n}).
\end{align*}
Thus, we can conclude if we prove that for any random $\epsilon_n\searrow 0$ we have that 
\begin{align}\label{e.measure_not_growing_too_much}
\mathfrak{m}_n((A_n)_{\eps_n})-\mathfrak{m}_n(A_n)
\to 0
\end{align}
in probability.
This follows by noting that for any $\epsilon>0$
\begin{align*}
&\P(\mathfrak{m}_n((A_n)_{\eps_n})-\mathfrak{m}_n(A_n) \geq a) \\ 
&\leq \P(\epsilon_n>\epsilon) + \P(\mathfrak{m}_n((A_n)_{\eps})-\mathfrak{m}_n(A_n) \geq a) 
\end{align*}
which converges to $0$ by our assumption.
\end{proof}

\bibliographystyle{alpha}	
\bibliography{biblio}

\begin{thebibliography}{AHPS23}

\bibitem[AG25]{CLE4AG}
Morris Ang and Ewain Gwynne.
\newblock Critical {L}iouville quantum gravity and {CLE}$_4$.
\newblock {\em Annales de l'Institut Henri Poincar\'{e} Probabilit\'{e}s et
  Statistiques}, to appear, 2025.

\bibitem[AHPS23]{criticalMOT}
Juhan Aru, Nina Holden, Ellen Powell, and Xin Sun.
\newblock Brownian half-plane excursion and critical {L}iouville quantum
  gravity.
\newblock {\em J. Lond. Math. Soc. (2)}, 107(1):441--509, 2023.

\bibitem[AHS23]{AHSintegrability}
Morris Ang, Nina Holden, and Xin Sun.
\newblock {Integrability of SLE via conformal welding of random surfaces}.
\newblock {\em Communications on Pure and Applied Mathematics}, 2023.

\bibitem[ALS20]{ALS1}
Juhan Aru, Titus Lupu, and Avelio Sep{\'u}lveda.
\newblock {First passage sets of the 2D continuum Gaussian free field}.
\newblock {\em Probability Theory and Related Fields}, 176(3-4):1303--1355,
  2020.

\bibitem[APS19]{APS2}
Juhan Aru, Ellen Powell, and Avelio Sep\'{u}lveda.
\newblock Critical {L}iouville measure as a limit of subcritical measures.
\newblock {\em Electronic Communications in Probability}, 24, 2019.

\bibitem[APS20]{APS0}
Juhan Aru, Ellen Powell, and Avelio Sep{\'u}lveda.
\newblock {Liouville measure as a multiplicative cascade via level sets of the
  Gaussian free field}.
\newblock {\em Annales de l'Institut Fourier}, 70(1):205--245, 2020.

\bibitem[ARS23]{ARSintegrability}
Morris Ang, Guillaume Remy, and Xin Sun.
\newblock {FZZ formula of boundary Liouville CFT via conformal welding}.
\newblock {\em Journal of the European Mathematical Society}, 2023.

\bibitem[Ben18]{Benoist}
St\'{e}phane Benoist.
\newblock Natural parametrization of {SLE}: the {G}aussian free field point of
  view.
\newblock {\em Electronic Journal of Probability}, 23, 2018.

\bibitem[Ber17]{Ber}
Nathana{\"e}l Berestycki.
\newblock {An elementary approach to Gaussian multiplicative chaos}.
\newblock {\em Electronic Communications in Probability}, 22, 2017.

\bibitem[BG22]{BG}
Nathana\"{e}l Berestycki and Ewain Gwynne.
\newblock Random walks on mated-{CRT} planar maps and {L}iouville {B}rownian
  motion.
\newblock {\em Communications in Mathematical Physics}, 395(2):773--857, 2022.

\bibitem[BN11]{SLEnotes}
Nathana\"el Berestycki and James~R. Norris.
\newblock Lectures on {S}chramm--{L}oewner {E}volution.
\newblock Available on the websites of the authors, 2011.

\bibitem[BP25]{BP}
Nathana{\"e}l Berestycki and Ellen Powell.
\newblock {\em {G}aussian free field and {L}iouville quantum gravity}.
\newblock Cambridge Studies in Advanced Mathematics. Cambridge University
  Press, 2025.

\bibitem[DMS21]{MOT}
Bertrand Duplantier, Jason Miller, and Scott Sheffield.
\newblock Liouville quantum gravity as a mating of trees.
\newblock {\em Ast\'{e}risque}, (427), 2021.

\bibitem[DS11]{DS}
Bertrand Duplantier and Scott Sheffield.
\newblock Liouville quantum gravity and {KPZ}.
\newblock {\em Inventiones Mathematicae}, 185(2):333--393, 2011.

\bibitem[GHS20]{GHS}
Ewain Gwynne, Nina Holden, and Xin Sun.
\newblock A mating-of-trees approach for graph distances in random planar maps.
\newblock {\em Probability Theory and Related Fields}, 177(3-4):1043--1102,
  2020.

\bibitem[GHSS21]{GHSS}
Christophe Garban, Nina Holden, Avelio Sep{\'u}lveda, and Xin Sun.
\newblock Liouville dynamical percolation.
\newblock {\em Probability Theory and Related Fields}, 180(3-4):621--678, 2021.

\bibitem[GMS21]{Tutte}
Ewain Gwynne, Jason Miller, and Scott Sheffield.
\newblock The {T}utte embedding of the mated-{CRT} map converges to {L}iouville
  quantum gravity.
\newblock {\em The Annals of Probability}, 49(4):1677--1717, 2021.

\bibitem[HMP10]{HMP}
Xiaoyu Hu, Jason Miller, and Yuval Peres.
\newblock Thick points of the {G}aussian free field.
\newblock {\em The Annals of Probability}, 38(2):896--926, 2010.

\bibitem[HP21]{HP}
Nina Holden and Ellen Powell.
\newblock Conformal welding for critical {L}iouville quantum gravity.
\newblock {\em Annales de l'Institut Henri Poincar\'{e} Probabilit\'{e}s et
  Statistiques}, 57(3):1229--1254, 2021.

\bibitem[HS23]{HScardy}
Nina Holden and Xin Sun.
\newblock Convergence of uniform triangulations under the {C}ardy embedding.
\newblock {\em Acta Mathematica}, 230(1):93--203, 2023.

\bibitem[Kah85]{Kahane}
Jean-Pierre Kahane.
\newblock Sur le chaos multiplicatif.
\newblock {\em Annales des Sciences Math\'{e}matiques du Qu\'{e}bec},
  9(2):105--150, 1985.

\bibitem[Kem17]{Kem17}
Antti Kemppainen.
\newblock {\em Schramm-{L}oewner evolution}, volume~24 of {\em SpringerBriefs
  in Mathematical Physics}.
\newblock Springer, Cham, 2017.

\bibitem[KS17]{KS}
Antti Kemppainen and Stanislav Smirnov.
\newblock Random curves, scaling limits and {L}oewner evolutions.
\newblock {\em The Annals of Probability}, 45(2):698--779, 2017.

\bibitem[KW16]{KW}
Antti Kemppainen and Wendelin Werner.
\newblock The nested simple conformal loop ensembles in the riemann sphere.
\newblock {\em Probability Theory and Related Fields}, 165:835--866, 2016.

\bibitem[Law08]{LawC}
Gregory Lawler.
\newblock {\em Conformally invariant processes in the plane}.
\newblock Number 114. American Mathematical Society, 2008.

\bibitem[LR15]{LawlerRezaei}
Gregory~F. Lawler and Mohammad~A. Rezaei.
\newblock Minkowski content and natural parameterization for the
  {S}chramm-{L}oewner evolution.
\newblock {\em The Annals of Probability}, 43(3):1082--1120, 2015.

\bibitem[LS11]{LawlerSheffield}
Gregory~F. Lawler and Scott Sheffield.
\newblock A natural parametrization for the {S}chramm-{L}oewner evolution.
\newblock {\em The Annals of Probability}, 39(5):1896--1937, 2011.

\bibitem[LW13]{LawlerWerness}
Gregory~F. Lawler and Brent~M. Werness.
\newblock Multi-point {G}reen's functions for {SLE} and an estimate of
  {B}effara.
\newblock {\em The Annals of Probability}, 41(3A):1513--1555, 2013.

\bibitem[LZ13]{LawlerZhou}
Gregory~F. Lawler and Wang Zhou.
\newblock {\it {SLE}} curves and natural parametrization.
\newblock {\em The Annals of Probability}, 41(3A):1556--1584, 2013.

\bibitem[MS20]{TBM1}
Jason Miller and Scott Sheffield.
\newblock Liouville quantum gravity and the {B}rownian map {I}: the {${\rm
  QLE}(8/3,0)$} metric.
\newblock {\em Inventiones Mathematicae}, 219(1):75--152, 2020.

\bibitem[MS23]{MS4}
Vlad Margarint and Lukas Schoug.
\newblock A {G}aussian free field approach to the natural parametrisation of
  {SLE}$_4$.
\newblock {\em Electronic Communications in Probability}, 28, 2023.

\bibitem[RS05]{RohdeSchramm}
Steffen Rohde and Oded Schramm.
\newblock Basic properties of {SLE}.
\newblock {\em Ann. of Math. (2)}, 161(2):883--924, 2005.

\bibitem[RV10]{RV}
Raoul Robert and Vincent Vargas.
\newblock Gaussian multiplicative chaos revisited.
\newblock {\em The Annals of Probability}, 38(2):605--631, 2010.

\bibitem[She16]{Sheffield}
Scott Sheffield.
\newblock Conformal weldings of random surfaces: {SLE} and the quantum gravity
  zipper.
\newblock {\em The Annals of Probability}, 44(5):3474--3545, 2016.

\bibitem[SSV22]{SSV}
Lukas Schoug, Avelio Sep{\'u}lveda, and Fredrik Viklund.
\newblock Dimensions of two-valued sets via imaginary chaos.
\newblock {\em International Mathematics Research Notices}, 2022(5):3219--3261,
  2022.

\bibitem[SW16]{SheffieldWang}
Scott Sheffield and Menglu Wang.
\newblock Field-measure correspondence in {L}iouville quantum gravity almost
  surely commutes with all conformal maps simultaneously.
\newblock {\em Preprint arXiv:1605.06171}, 2016.

\bibitem[Zha08]{reversibility}
Dapeng Zhan.
\newblock Reversibility of chordal {SLE}.
\newblock {\em Ann. Probab.}, 36(4):1472--1494, 2008.

\end{thebibliography}

\end{document}